\documentclass[12pt]{article}

\usepackage[letterpaper, margin=1in]{geometry}

\usepackage[format=plain,
            labelfont={bf},
            textfont=it]{caption}
\usepackage{wrapfig}
\usepackage{amsfonts} 
\usepackage{amsthm}
\usepackage[shortlabels]{enumitem}
\usepackage{framed}
\renewenvironment{proof}{{\bf \emph{Proof.} }}{\hfill $\Box$ \\} 
\usepackage{soul}
\usepackage{amsmath}
\usepackage{amssymb}
\usepackage{mathrsfs}
\usepackage{yhmath}
\usepackage{subcaption}
\usepackage{comment}
\usepackage{graphicx}

\usepackage{float}
\usepackage{hyperref}
\usepackage[utf8]{inputenc}
\usepackage[sorting=none]{biblatex}
\usepackage[dvipsnames]{xcolor}

\usepackage[many]{tcolorbox}
\newtcolorbox{myAwesomeBox}{title = \textcolor{black}, 
float,
floatplacement=htbp,
    colbacktitle=Cerulean!10,
    enhanced,
    code={\singlespacing},  
    colback=Cerulean!10,  
    colframe=Cerulean!10  
}

\newcommand{\RomanNumeralCaps}[1]
    {\MakeUppercase{\romannumeral #1}}

\DeclareMathOperator{\Tr}{Tr} 

\DeclareMathOperator{\dd}{\mathrm{d}} 

\DeclareMathOperator{\Law}{Law}
\DeclareMathOperator{\Vol}{Vol}
\DeclareMathOperator{\dist}{dist}
\DeclareMathOperator{\diam}{diam}

\newcommand{\bld}[1]{{\boldsymbol{#1}}}

\newcommand{\mcal}[1]{{\mathcal{#1}}}

\newcommand{\R}{\mathbb{R}}

\makeatletter
\renewcommand*\env@matrix[1][*\c@MaxMatrixCols c]{%
  \hskip -\arraycolsep
  \let\@ifnextchar\new@ifnextchar
  \array{#1}}
\makeatother

\usepackage{algorithm}
\usepackage[noend]{algpseudocode}
\usepackage{setspace}

\makeatletter
\def\BState{\State\hskip-\ALG@thistlm}
\makeatother

\usepackage{dsfont}
\usepackage[T1]{fontenc}
\usepackage{ebgaramond}

\usepackage{setspace}
\usepackage{etoc}

\newtheorem{proposition}{Proposition}
\newtheorem{assumption}{Assumption}
\newtheorem{theorem}{Theorem}
\newtheorem*{C1}{Corollary~\ref{corquerybudget}}

\newtheorem{lemma}{Lemma}
\newtheorem{problem}{Problem}
\newtheorem{corollary}{Corollary}
\newtheorem{definition}{Definition}
\newtheorem{remark}{Remark}

\usepackage{tikz}
\usetikzlibrary{arrows,positioning,shapes,calc}

\usepackage{authblk}

\title{Efficient Approximation of Structured Linear Operators using Randomized One-Sided Queries}
\author[1]{Esther Gallmeier}
\author[2]{Samuel E. Otto}
\affil[1]{Center for Applied Mathematics, Cornell University, Ithaca, NY 14853, USA}
\affil[2]{Sibley School of Mechanical and Aerospace Engineering, Cornell University, Ithaca, NY 14853, USA}
\date{}
\begin{document}
\onecolumn
\maketitle

\allowdisplaybreaks

\begin{abstract}
Approximating linear operators on high-dimensional spaces using only forward operator-vector product queries is a common but challenging task in scientific computing and machine learning arising when access to the adjoint action is experimentally unavailable or difficult to construct computationally.
Existing query-efficient methods with polylogarithmic query complexity in the error tolerance rely on self-adjointness or access to the adjoint operator, while existing adjoint-free methods based on global low-order regularity have polynomial query complexity.
We show that a priori local row-space information can be exploited for query-efficient, adjoint-free operator approximation, and that this information can be supplied by local high-order regularity estimates.
Our approach first extends the fixed-sparsity matrix approximation algorithm of (Amsel et al., SIMAX, 2026) to the task of approximating a matrix by one whose rows lie in prescribed subspaces, providing a general template for exploiting local row-space structure from forward queries alone.
We then apply this framework to Hilbert-Schmidt integral operators with asymptotically smooth kernels.
We establish asymptotic smoothness for Greens functions of canonical second-order elliptic PDEs in $d\leq 3$ dimensions including Poisson's equation in balls and constant coefficient equations on Tori.
In $d$ spatial dimensions, our algorithm uses $\mcal{O}\big(\log(\varepsilon^{-1})\log^{d}(\varepsilon^{-1}\delta^{-1/2})\big)$ non-adaptive randomized queries to produce an $\varepsilon$-accurate approximation in Hilbert-Schmidt norm with probability $1-\delta$.
This implies the error decreases as $\mcal{O}\big( \exp(- c q^{1/(d+1)}) \big)$ with respect to the number of queries $q$, which we observe in numerical experiments using tens to hundreds of queries to approximate solution operators of one-dimensional variable-coefficient second-order elliptic equations.
\end{abstract}

\tableofcontents

\newpage
\section{Introduction}

Many problems in scientific computing and machine learning seek to approximate operators from input-output observations between infinite-dimensional spaces.
These approximations serve as inexpensive surrogate models for large-scale inverse problems, design optimization, and solver acceleration \cite{azizzadenesheli2024neural}.
In this work, we consider a non-self-adjoint Hilbert-Schmidt integral operator $\mcal{A}: L^2(\Omega) \rightarrow L^2(\Omega)$ and assume access only to forward queries $f \mapsto \mcal{A}f$. 
This restriction arises naturally when observations come from experiments, passively observed systems, proprietary codes, complex or legacy solvers, where prescribing inputs and measuring outputs may be possible but constructing or querying an adjoint is not. 
The guiding question of this work is: what prior knowledge makes forward-only access sufficient to construct a provably accurate operator approximation using as few queries as possible. 

The finite-dimensional analogue of this problem for a matrix $\bld{A} \in \R^{m \times n}$ is of widespread interest in numerical linear algebra and reveals the fundamental limitations of forward-only access. 
Recent work on transpose-free linear algebra studies what can and cannot be inferred from  matrix-vector products $x \mapsto \bld{A}x$ when the transpose action $v \mapsto \bld{A}^T v$ is not available \cite{halikias2026transpose}.  
For several basic tasks, including least squares, maximum volume and aspects of spectral estimation, adversarial matrices can be constructed that agree on all observed forward queries but lead to substantially different solutions.
These findings complement recent $\Omega(n)$ lower bounds for computing approximate least squares solutions for a square linear system from forward queries only, even for perfectly conditioned matrices \cite{derezinski2026matrix}.
Similar difficulties arise for low-rank approximation.
It has been shown that exact low-rank matrix recovery is not uniquely determined from fewer than $n$ forward queries \cite{halikias2024structured}.
Related work by \cite{otto2023model} has characterized the set of possible row spaces of low-rank matrices that agree on the same set of forward sketches as submanifolds on the Grassmannian manifold, and another work by \cite{boulle2024operator} characterized the set of possible matrices satisfying identical sketching constraints that one could recover given partial prior knowledge of the symmetry of the desired unknown low-rank matrix. 
Moreover, \cite{halikias2026transpose} shows that $n$ forward queries are necessary even to obtain a low rank approximation within a constant factor of optimal. 
Together, these results demonstrate that many core linear algebra tasks are fundamentally limited when only given access to forward queries in the absence of additional prior information.
At the same time, these are worst-case analyses and do not rule out recovery or approximation using fewer queries when additional structure can be leveraged. 

A growing line of work studies matrix-vector query complexity of structured matrix approximation. 
Halikias and Townsend developed randomized algorithms for recovering matrices with known structures, including tridiagonal, Toeplitz-like and hierarchical low-rank matrices from general matrix-vector products\cite{halikias2024structured}. 
More recent work gives near-optimal guarantees for hierarchically off-diagonal low-rank approximation, although the algorithm requires products with both $\bld{A}$ and $\bld{A}^T$ \cite{chen2025near}.
Without attempting an exhaustive review here, related studies consider other structured matrix families, including low-rank, diagonal, sparse and butterfly approximations \cite{halko2011finding,rokhlin2010randomized,musco2015randomized,bakshi2023krylov, wimalajeewa2013recovery, dasarathy2015sketching, liu2021butterfly}. 
More broadly, recent results establish upper and lower bounds for approximating matrices from general structured matrix families using one- and two-sided queries \cite{amsel2025query}.
A particularly relevant forward-only result arises in fixed-sparsity approximation. If the desired approximation is known in advance to have a fixed sparsity pattern, then each row lies in a known coordinate subspace. The work in \cite{amsel2026fixed} shows that, when each row has at most $k$ nonzero entries at known locations, a randomized nonadaptive algorithm  returns a matrix with the prescribed sparsity pattern whose Frobenius error is within $(1 + \varepsilon)$ factor of the best such approximation using $\mcal{O}(k/\varepsilon)$ forward queries. 
Our first contribution generalizes this algorithm from a fixed-sparsity viewpoint to the setting where each row is approximated in a prescribed $k$-dimensional subspace.
We leverage that the essential ingredient for forward-only approximation is not coordinate sparsity itself, but the existence of low-dimensional row spaces known a priori.
Moreover, this finite-dimensional perspective provides the algorithmic template for constructing approximations of Hilbert-Schmidt integral operators. 
After discretizing the operator with locally supported basis functions, each row of the resulting matrix is determined by a localized portion of the integral kernel, and forward queries can approximate the discretized operator if suitable row spaces are supplied in advance by properties of the kernel.

Generally, neural operator architectures provide flexible frameworks for approximating maps between function spaces, including Fourier Neural Operators, Graph Neural Operators, PCA-Nets, DeepONets, and related models,  \cite{li2020fourier, li2020neural, li2020multipole, lanthaler2023operator, lu2021learning, kovachki2023neural, kovachki2024operator}. 
Their appeal lies in their broad applicability, as they require relatively little explicit information about the governing partial differential equation (PDE) or physical process generating the data. 
However, universal approximation theory is primarily qualitative. 
It shows that such architectures are expressive, yet it does not quantify the model size or number of training samples needed to achieve a prescribed tolerance. 
Recent quantitative results indicate that efficiency heavily depends on additional operator structure. 
For broad classes such as Lipschitz or Frechet-differentiable operators, standard neural operator architectures can suffer from a curse of parametric complexity, with worst-case model sizes growing exponentially in the inverse accuracy \cite{kovachki2024data,kovachki2021universal,lanthaler2021error}. 
More favorable results are known for structured classes, such as holomorphic operators or specific PDE solution maps, where algebraic rates in the inverse tolerance can be proven for parametric or sample complexity.
In particular, \cite{reinhardt2026statistical} establish statistical convergence guarantees for infinite-dimensional operator regression and algebraic sample-complexity rates for holomorphic operators represented by encoder-decoder neural operators.
These guarantees are often formulated either uniformly over compact input sets or in Bochner $L^2(\mu)$-type norms with respect to a prescribed input distribution. 

In contrast to neural operator architectures, structure-exploiting approaches have recently led to remarkable results for approximating the solution operators of linear elliptic PDEs. 
By taking advantage of the hierarchical structure of elliptic solution operators, the algorithms in \cite{boulle2023elliptic, schafer2024sparse} obtain exponential-type convergence rates with worst-case operator-norm  or Hilbert-Schmidt norm guarantees. 
These results demonstrate that, when the operator class is sufficiently well understood, one can go far beyond generic statistical learning rates.
In particular, we will describe an algorithm as \textit{query-efficient} if the error decays at a stretched exponential rate $\mcal{O}(e^{-c q^p})$ with respect to the number of queries $q$ for some $c > 0$ and $0 < p < 1$.  
However, these existing approaches face serious limitations in the non-self-adjoint setting. 
The method in \cite{schafer2024sparse} relies on sparse Cholesky factorization and is therefore naturally suited only to positive-definite self-adjoint operators. 
A proposed extension to non-self-adjoint operators by \cite{schafer2024sparse} seeks to recover the LU factorization of the operator from matrix-vector and matrix-transpose-vector products, requiring access to the adjoint operator. 
Similarly, the method of \cite{boulle2023elliptic} is based on an infinite-dimensional randomized SVD framework and can be extended formally to non-self-adjoint operators, but this also requires querying both the operator and its adjoint. 

Linear operator learning from one-sided input--output data has been studied under a variety of global statistical, spectral, and regularity assumptions.
Convergence rates have been established for self-adjoint operators with prescribed spectral structure \cite{dehoop2023convergence}, for Green's functions learned by functional regression in reproducing kernel Hilbert spaces \cite{stepaniants2023learning}, for elliptic pseudodifferential operators \cite{chen2026convergence}, and for general linear operators between Sobolev spaces \cite{chen2026optimal}.
Closest to the present setting, \cite{boulle2024operator} shows that global information about the range of $A^*$, supplied for elliptic solution operators by Sobolev regularity estimates, enables adjoint-free approximation from forward queries alone. 
These approaches yield algebraic convergence rates in the number of samples or queries under their respective assumptions.

Many operators of interest, however, possess local structure beyond what is captured by global regularity estimates, providing low-dimensional local row spaces enabling data-efficient one-sided approximations.
In this work, we demonstrate provably accurate, query-efficient approximation for a class of Hilbert-Schmidt integral operators whose kernels satisfy an asymptotic smoothness property in an $L^2$ sense with respect to the argument being integrated.
This is a high-order local regularity assumption on the kernel guaranteeing that polynomials exponentially approximate the ranges $\left(\mcal{A}|_{\mcal{X} \times \mcal{Y}}\right)^*$ restricted to \textit{admissible} subdomains $\mcal{X}, \mcal{Y} \subset \Omega$ whose separation is comparable to their diameters.
Related pointwise notions of asymptotic smoothness appear throughout the hierarchical-matrix and wavelet literature \cite{bebendorf2006accelerating,bebendorf2008hierarchical,hackbusch2015hierarchical}.
We introduce an $L^2$-based formulation suited to the Hilbert-Schmidt setting of our analysis and show that it is closely connected to standard interior elliptic regularity estimates. 
We then verify the property for the Green function of the Poisson equation on the unit ball in dimensions $d=1,2,3$, as well as for the Green function of a constant-coefficient linear elliptic operator on the flat torus under standard existence and off-diagonal regularity assumptions. 
Combined with the hierarchical decomposition induced by the admissibility condition, asymptotic smoothness yields low-dimensional subspaces that accurately approximate the rows of the operator after discretization in a locally supported finite element basis. 
We then apply our generalization of the matrix approximation algorithm from \cite{amsel2026fixed} to these local row spaces, obtaining an approximation of the underlying operator with Hilbert-Schmidt norm error $\mcal{O}\left( \exp\big(-c q^{1/(d+1)}\big) \right)$ using $q$ non-adaptive Gaussian random queries. 
The central message of our work is that a priori local row space information can enable query-efficient adjoint-free approximation for linear operators, and this information can be supplied by local high-order regularity. 

\subsection{Our Contributions and Roadmap}

This paper makes two main contributions: (1) a matrix approximation algorithm using non-adaptive randomized matrix-vector products to approximate a given matrix with a structured matrix whose rows live in subspaces known a priori and (2) an  operator approximation algorithm using only randomized forward queries to learn asymptotically smooth integral operators yielding Hilbert-Schmidt error guarantees with a stretched exponential convergence rate in high probability. 
The corresponding algorithms are explicitly given in Algorithms \ref{alg:amselgen} and \ref{adjointfreealg} with their contributions fully summarized in Theorems \ref{thm:amselgen} and \ref{bigtheorem}.

Algorithm~\ref{alg:amselgen} is based on a simple modification of an algorithm for approximating a matrix with a known sparsity pattern \cite{amsel2026fixed} to the case where each row of the constructed matrix lies in a pre-determined subspace of dimension at most $k$. It achieves a near-optimal, i.e. $(1 + \varepsilon)$, approximation to the best matrix approximation with those pre-determined subspaces with probability at least $1- \delta$ using $\mcal{O}(k/(\varepsilon\delta ))$ queries. 
The algorithm queries  the given matrix with standard Gaussian vectors and proceeds to solve row-wise least-squares problems to recover each row's coefficients in its corresponding subspace.

Algorithm~\ref{adjointfreealg} leverages Algorithm~\ref{alg:amselgen} to approximate asymptotically smooth integral operators by projecting the operator onto a finely resolved finite element space. 
It uses $\mcal{O}\left(\log(\varepsilon^{-1})
\log^d(\varepsilon^{-1}\delta^{-1/2})\right)$ queries to the underlying operator --- given by input functions with standard normal coefficients in the finite element basis --- to construct an $\varepsilon$-approximation in the Hilbert-Schmidt norm with probability at least $1 - \delta$. 
A schematic outline of the algorithm is provided in Figure~\ref{roadmap}. 
The algorithm discretizes the operator via an $L^2$-projection onto a finite element space consisting of piecewise constant polynomials, constructs low-dimensional subspaces that accurately approximate each locally supported row of the discretized operator, and then applies Algorithm~\ref{alg:amselgen} to approximate the operator. 
The crux of the argument is that asymptotically smooth kernels admit separable polynomial expansions with exponentially decaying remainders on admissible domains; combined with a hierarchical partition, this ensures that the size of each row subspace is $\mcal{O}(\log^{d+1}( \varepsilon^{-1}))$, where $d$ is the dimension of the spatial domain.

\section{Adjoint-Free Matrix Approximation in Fixed Row Subspaces}
\label{amselgensection}

In this section, we focus on the task of approximating a matrix $\bld{A} \in \R^{n_r \times n_c}$ with a structured matrix with pre-determined subspaces for each row, with error comparable to the best matrix approximation given by the subspaces with respect to the Frobenius norm. 
Let $\mcal{V}_i \subset \R^{n_c}$ denote the pre-determined subspace for each row $i$ and $\bld{P}_i = \bld{V}_i\bld{V}_i^T$ the orthogonal projector onto $\mcal{V}_i$ formed using an orthonormal basis $\bld{V}_i$ of $\mcal{V}_i$ for each $i = 1, \dots, n_r$. Define
\[
    \bld{P} [\bld{A}] :=
    \begin{bmatrix}
        - \ \ \bld{A}_{1,:} \bld{P}_1 \ \ - \\
        \vdots \\
        - \ \ \bld{A}_{n_r,:} \bld{P}_{n_r} \ \ -
    \end{bmatrix},
\]
as the matrix where each row is the orthogonal projection of the corresponding row of the given matrix $\bld{A}$ onto its respective pre-determined subspace, and observe that $\bld{P}[ \bld{A}]$ is the matrix nearest to the given matrix $\bld{A}$ in the Frobenius norm whose rows live in those fixed subspaces: 
\[
    \bld{P} [\bld{A}] 
        = \text{argmin}_{\bld{X} = \bld{P} [\bld{X}]} \| \bld{A} - \bld{X}\|_F. 
\]
Using this setup we generalize the fixed-sparsity approximation problem in \cite{amsel2026fixed} as follows:
\begin{problem}[fixed row-space approximation]
\label{prob:amselgen}
    Given a matrix $\bld{A} \in \R^{n_r\times n_c}$, pre-determined subspaces $\mcal{V}_i \subset \R^{n_c}$ for each row $i$, and a tolerance $\varepsilon > 0$, 
    find a matrix $\bld{\tilde{A}}$ so that $\bld{\tilde{A}} = \bld{P} [\bld{\tilde{A}}]$ and
    \[
        \| \bld{A} - \bld{\tilde{A}}\|_F 
        \leq (1 + \varepsilon) \| \bld{A} - \bld{P}[ \bld{A}]\|_F.
    \]
\end{problem}

Problem \ref{prob:amselgen} seeks a \textit{near-optimal} approximation to $\bld{A}$ where each row lives in the pre-determined subspaces. 
The problem considered by \cite{amsel2026fixed} corresponds to the case where each $\mcal{V}_i$ is spanned by a known set of canonical basis vectors.
To solve the generalized problem, we introduce Algorithm~\ref{alg:amselgen}, which generalizes the fixed-sparsity matrix approximation algorithm in \cite{amsel2026fixed}.
Both algorithms query the forward action of the matrix on $q$ Gaussian random vectors yielding $\bld{Z} = \bld{A} \bld{G}$, where $\bld{G}$ is an $n_c \times q$ matrix with independent standard normal entries.
These queries are subsequently used to solve $n_r$ independent row-wise least-squares problems.
Theorem~\ref{thm:amselgen} proves how well the resulting approximation  $\tilde{\bld{A}}$ recovers the row-wise projection $\bld{P}[\bld{A}]$.   
As in the fixed-sparsity case, when the dimensions of the pre-determined row subspaces are at most a constant $k$, Corollary~\ref{cor:amselsolvesproblem} shows that $\mcal{O}(k/\varepsilon)$ non-adaptive matrix-vector Gaussian queries suffice to solve Problem \ref{prob:amselgen} in expectation.

\begin{algorithm}
\caption{Structured Matrix Approximation with Fixed Row Subspaces} 
\label{alg:amselgen}

\begin{algorithmic}[1]
\Procedure{Rowwise-Matrix-Approximation}{$\bld{x} \mapsto \bld{A}\bld{x}, \bld{V}_1, ..., \bld{V}_{n_r}, q$}

\State $\text{Form} \ \bld{G} \sim \text{Gaussian}(n_c,q)$  \color{gray} \algorithmiccomment{$n_c \times q$ i.i.d. Gaussian matrix}
\color{black}

\State $\text{Compute} \ \bld{Z} = \bld{AG}$ \color{gray} \algorithmiccomment{$q$ non-adaptive matvec queries}

\color{black}
\BState \For {$i = 1,2, ..., n_r$} \do  \\

\State $\text{Project} \ \bld{\Gamma}_i = \bld{G}^T\bld{V}_i$ \color{gray} \algorithmiccomment{  $\bld{V}_i$ is $n_c \times k_i$ fixed row space}

\color{black}
\State $\text{Let} \ \bld{z}_i^T = \bld{Z}_{i,:} $ \color{gray} \algorithmiccomment{$i$-th row of $\bld{Z}$}

\color{black}
\State $\text{Compute} \ \bld{\Gamma}_i = \bld{QR}$ \color{gray} \algorithmiccomment{compute reduced QR}

\color{black}
\State $\text{Form} \ \bld{\tilde{x}}_i =  \bld{R}^{-1} \bld{Q}^T \bld{z}_i$ \color{gray} \algorithmiccomment{solve $q \times k_i$ least squares problem with $\bld{\Gamma}_i^\dagger$}

\color{black}
\State $\text{Set} \  \bld{\tilde{A}}_{i,:} = \bld{\tilde{x}}_i^T \bld{V}_i^T$ \color{gray} \algorithmiccomment{construct $i$-th row of $\bld{\tilde{A}}$}

\EndFor

\color{black}
\Return $\bld{\tilde{A}}$

\EndProcedure
\end{algorithmic}
\end{algorithm}

\begin{theorem}
\label{thm:amselgen}
    Given a matrix $\bld{A} \in \R^{n_r \times n_c}$, a natural number $k$, and subspaces $\mcal{V}_i \subset \R^{n_c}$, $i = 1,\ldots,n_r$, with dimensions $k_i:=\dim \mcal{V}_i \leq k$ and orthonormal bases for each subspace given by the columns of $\bld{V}_i$. 
    Then, using $q \geq k + 2$ randomized matrix-vector queries, Algorithm~\ref{alg:amselgen} returns a matrix 
    $\bld{\tilde{A}} = \text{Alg}(\bld{x} \mapsto \bld{A}\bld{x}, \bld{V}_1, ..., \bld{V}_{n_r}, q)$ 
    satisfying $\bld{\tilde{A}} = \bld{P}[\bld{\tilde{A}}]$ and
    \[
        \mathbb{E}\left[\| \bld{P} [ \bld{A}] - \bld{\tilde{A}}\|_F^2\right] 
        \leq \frac{k}{q-k-1}\| \bld{A} - \bld{P} [\bld{A}]\|_F^2.
    \]
\end{theorem}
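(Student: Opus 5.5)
The argument works row by row. Rows decouple, and Gaussian independence lets us condition on one part of $\bld{G}$ while averaging over another. Fix a row $i$ and write $\bld{a}_i^T = \bld{A}_{i,:}$. Decompose $\bld{a}_i = \bld{V}_i\bld{x}_i + \bld{r}_i$, where $\bld{x}_i = \bld{V}_i^T\bld{a}_i$ and $\bld{r}_i = (\bld{I}-\bld{P}_i)\bld{a}_i$. Then $\bld{P}[\bld{A}]_{i,:} = \bld{x}_i^T\bld{V}_i^T$, and $\|\bld{A}-\bld{P}[\bld{A}]\|_F^2 = \sum_i \|\bld{r}_i\|_2^2$. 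The structural claim $\bld{\tilde A} = \bld{P}[\bld{\tilde A}]$ is immediate from the construction $\bld{\tilde A}_{i,:} = \bld{\tilde x}_i^T\bld{V}_i^T$, since $\bld{V}_i^T\bld{P}_i = \bld{V}_i^T$.

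For the error, the $i$-th row of the sketch is $\bld{z}_i = \bld{G}^T\bld{a}_i = \bld{\Gamma}_i\bld{x}_i + \bld{G}^T\bld{r}_i$. Since $q\ge k+2 > k_i$, the Gaussian matrix $\bld{\Gamma}_i$ has full column rank almost surely, so $\bld{\Gamma}_i^\dagger\bld{\Gamma}_i = \bld{I}$. This gives $\bld{\tilde x}_i - \bld{x}_i = \bld{\Gamma}_i^\dagger\bld{G}^T\bld{r}_i$. Because $\bld{V}_i$ has orthonormal columns, the row error is $\|\bld{P}[\bld{A}]_{i,:}-\bld{\tilde A}_{i,:}\|_2 = \|\bld{\Gamma}_i^\dagger \bld{G}^T\bld{r}_i\|_2$.

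The key observation uses rotation invariance of the standard Gaussian. Write $\bld{G}^T\bld{V}_i = \bld{\Gamma}_i$ and $\bld{g}_i := \bld{G}^T\bld{r}_i/\|\bld{r}_i\|_2$. Because $\bld{r}_i$ is orthogonal to the columns of $\bld{V}_i$, the matrix $\bld{\Gamma}_i$ ($q\times k_i$) and the vector $\bld{g}_i\in\R^q$ are jointly Gaussian with zero cross-covariance. They are therefore independent, with i.i.d. $N(0,1)$ entries. If $\bld{r}_i = 0$, the row contributes nothing.

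We then condition on $\bld{\Gamma}_i$ and average over $\bld{g}_i$:
\[
\mathbb{E}\big[\|\bld{\Gamma}_i^\dagger\bld{g}_i\|_2^2 \,\big|\, \bld{\Gamma}_i\big] = \|\bld{\Gamma}_i^\dagger\|_F^2 = \Tr\big((\bld{\Gamma}_i^T\bld{\Gamma}_i)^{-1}\big).
\]
By the standard inverse-Wishart expectation, $\mathbb{E}[(\bld{\Gamma}_i^T\bld{\Gamma}_i)^{-1}] = \bld{I}_{k_i}/(q-k_i-1)$ for $q\ge k_i+2$, which is the same fact used in the randomized SVD analysis of Halko–Martinsson–Tropp. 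Hence
\[
\mathbb{E}\|\bld{P}[\bld{A}]_{i,:}-\bld{\tilde A}_{i,:}\|_2^2 = \frac{k_i}{q-k_i-1}\|\bld{r}_i\|_2^2 \le \frac{k}{q-k-1}\|\bld{r}_i\|_2^2,
\]
where the inequality holds because $t\mapsto t/(q-t-1)$ is increasing on $[0,q-1)$. Summing over $i$ by linearity of expectation yields the bound. The rows need not be independent of each other, since only linearity is used.

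The only delicate step is the independence claim and the finiteness of the inverse-Wishart moment, which is exactly why $q\ge k+2$ is required. I would cite the inverse-Wishart mean rather than rederive it.
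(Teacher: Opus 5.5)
Your proof is correct and follows the paper's argument: you split each row into its $\mcal{V}_i$-component and residual, use rotation invariance to get independence of $\bld{\Gamma}_i$ from the residual sketch, condition on $\bld{\Gamma}_i$ to reduce to $\mathbb{E}\|\bld{\Gamma}_i^\dagger\|_F^2 = k_i/(q-k_i-1)$, bound by $k/(q-k-1)$, and sum over rows. The differences are only cosmetic: you use the single Gaussian vector $\bld{G}^T\bld{r}_i/\|\bld{r}_i\|_2$ where the paper completes $\bld{V}_i$ to an orthonormal basis $[\bld{V}_i\ \bld{W}_i]$, and you cite the inverse-Wishart mean where the paper cites Proposition 2 of Amsel et al., which is the same fact.
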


\begin{proof}[Theorem~\ref{thm:amselgen}]
We closely follow the proof of Theorem~1.3 in \cite{amsel2026fixed}.
The algorithm processes the rows of $\bld{Z} = \bld{AG} \in \R^{n_r \times q}$ sequentially to construct approximations for the rows of $\bld{A}$. 
Let the columns of $\bld{V}_i$ be an orthonormal basis for the $k_i$-dimensional subspace $\mcal{V}_i \subset \R^{n_c}$, and complete it so that the columns of $\bld{Q}_i = \begin{bmatrix} \bld{V}_i & \bld{W}_i \end{bmatrix}$ are an orthonormal basis of $\R^{n_c}$. 
The $i$-th row of $\bld{A}$ can be expressed as
\[
    \bld{a}_i := \bld{A}_{i,:}^T = \bld{V}_i \bld{x}_i + \bld{W}_i \bld{y}_i
\]
with some coefficients $\bld{x}_i\in\R^{k_i}$ and $\bld{y}_i\in\R^{n_c-k_i}$. 
The $i$-th row of $\bld{AG}$ is given by
\[
    \bld{z}_i^T
    = \bld{a}_i^T \bld{G}
    = \bld{x}_i^T \bld{V}_i^T \bld{G} + \bld{y}_i^T \bld{W}_i^T \bld{G}.
\]
Letting $\bld{\Gamma}_i = \bld{G}^T \bld{V}_i$ and $\bld{\tilde{\Gamma}}_i = \bld{G}^T \bld{W}_i$, 
this becomes
\[
    \bld{z}_i = \bld{\Gamma}_i \bld{x}_i + \bld{\tilde{\Gamma}}_i \bld{y}_i. 
\]
Due to the orthogonal invariance of Gaussian random vectors, the random matrix 
$\begin{bmatrix}
    \bld{\Gamma}_i & \bld{\tilde{\Gamma}}_i
\end{bmatrix} = \bld{G}^T \bld{Q}$ has the same probability law $N(0,1)^{\otimes (q \times n_c)}$ as $\bld{G}^T$, that is, its entries are independent $N(0,1)$ random variables.
The algorithm solves a least squares problem using $\bld{z}_i \in \R^q$ and $\bld{\Gamma}_i\in\R^{q\times k_i}$ to produce an approximation of $\bld{x}_i$ given by
\[
    \bld{\tilde{x}}_i 
    := \bld{\Gamma}_i^\dagger \bld{z}_i 
    = \bld{\Gamma}_i^\dagger (\bld{\Gamma}_i \bld{x}_i + \bld{\tilde{\Gamma}}_i \bld{y}_i) 
    = \bld{x}_i + \bld{\Gamma}_i^\dagger \bld{\tilde{\Gamma}}_i \bld{y}_i,
\]
where $\bld{\Gamma}_i^\dagger$ denotes the Moore-Penrose pseudoinverse of $\bld{\Gamma}_i$.
Note that $\bld{\Gamma}_i \in \R^{q\times k_i}$ has full-column-rank with probability one because $q \geq k_i$.
Since $\bld{\Gamma}_i$ and $\bld{\tilde{\Gamma}}_i$ are independent and $\mathbb{E}[\bld{\tilde{\Gamma}}_i] = 0$, we have $\mathbb{E}[\bld{\tilde{x}}_i] = \bld{x}_i$.
Because $\Law\left(\begin{bmatrix}
    \bld{\Gamma}_i & \bld{\tilde{\Gamma}}_i
\end{bmatrix}\right) = N(0,1)^{\otimes (q \times n_c)}$ and $q \geq k_i + 2$, it follows from Proposition 2 in \cite{amsel2026fixed} that
\begin{align}
    \mathbb{E}\left[\left\| \bld{x}_i - \bld{\tilde{x}}_i\right\|_2^2\right] 
    &= \mathbb{E}\left[\big\| \bld{\Gamma}_i^\dagger \bld{\tilde{\Gamma}}_i \bld{y}_i\big\|_2^2\right]
    = \mathbb{E}\left[\mathbb{E} \left[ \big\| \bld{\Gamma}_i^\dagger \bld{\tilde{\Gamma}}_i \bld{y}_i\big\|_2^2 \ \big| \ \bld{\Gamma}_i\right]\right] \nonumber \\
    &= \mathbb{E}\left[\| \bld{y}_i\|_2^2 \  \Tr\left(\bld{\Gamma}_i^\dagger \bld{\Gamma}_i^{\dagger^T}\right) \right]
    = \mathbb{E} \left[ \| \bld{y}_i\|_2^2 \  \| \bld{\Gamma}_i^\dagger\|_F^2 \right] \nonumber \\
    &= \frac{k_i}{q-k_i-1} \| \bld{y}_i\|_2^2 \leq \frac{k}{q-k-1} \| \bld{y}_i\|_2^2, \nonumber 
\end{align}
where we have used that $k_i \leq k$. 
Therefore, the approximation $\bld{\tilde{a}}_i = \bld{V}_i \bld{\tilde{x}}_i$ of the $i$-th row of $\bld{A}$ satisfies
\[
    \mathbb{E}\big[ \|   \bld{P}_i \bld{a}_i - \bld{\tilde{a}}_i \|_2^2\big] 
    = \mathbb{E} \left[ \| \bld{x}_i - \bld{\tilde{x}}_i\|_2^2\right] 
    \leq \frac{k}{q-k-1} \| \bld{y}_i \|_2^2 
    = \frac{k}{q-k-1} \|  \bld{a}_i - \bld{P}_i \bld{a}_i \|_2^2.
\]
Summing over rows $i=1, \ldots, n_r$ completes the proof.
\end{proof}

Theorem~\ref{thm:amselgen} immediately yields the query complexity required to obtain a near-optimal approximation to $\bld{A}$ specified in Problem~\ref{prob:amselgen}.

\begin{corollary} 
\label{cor:amselsolvesproblem}
Assume the setting of Theorem~\ref{thm:amselgen}. Using $q \geq k + 2$ randomized matrix-vector queries, Algorithm~\ref{alg:amselgen} returns $\bld{\tilde{A}}$ such that
\[
    \mathbb{E}\left[\|  \bld{A} - \bld{\tilde{A}}\|_F^2\right] 
    \leq \left( 1 +  \frac{k}{q-k-1} \right)\| \bld{A} - \bld{P} [\bld{A}]\|_F^2.
\]
In particular using $q \geq \frac{k}{2\varepsilon} + k + 1$ queries, Algorithm~\ref{alg:amselgen} solves Problem \ref{prob:amselgen} in expectation. 
\end{corollary}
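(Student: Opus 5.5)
The proof reduces to a Pythagorean decomposition combined with Theorem~\ref{thm:amselgen}. Since $\bld{\tilde{A}} = \bld{P}[\bld{\tilde{A}}]$, each row $\bld{\tilde{a}}_i$ lies in $\mcal{V}_i$. The vector $\bld{P}_i\bld{a}_i$ also lies in $\mcal{V}_i$, so $\bld{P}_i\bld{a}_i - \bld{\tilde{a}}_i \in \mcal{V}_i$, while $\bld{a}_i - \bld{P}_i\bld{a}_i \in \mcal{V}_i^\perp$. Hence, row by row and deterministically,
\[
\|\bld{a}_i - \bld{\tilde{a}}_i\|_2^2 = \|\bld{a}_i - \bld{P}_i\bld{a}_i\|_2^2 + \|\bld{P}_i\bld{a}_i - \bld{\tilde{a}}_i\|_2^2 .
\]
Summing over $i$ gives $\|\bld{A}-\bld{\tilde{A}}\|_F^2 = \|\bld{A}-\bld{P}[\bld{A}]\|_F^2 + \|\bld{P}[\bld{A}]-\bld{\tilde{A}}\|_F^2$. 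We then take expectations; the first term is deterministic. Applying Theorem~\ref{thm:amselgen}, which is valid because $q \geq k+2$, to the second term yields the stated bound with factor $1 + k/(q-k-1)$.

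For the second claim, the aim is $\mathbb{E}\|\bld{A}-\bld{\tilde{A}}\|_F^2 \leq (1+\varepsilon)^2\|\bld{A}-\bld{P}[\bld{A}]\|_F^2$. Taking square roots and using Jensen, $\mathbb{E}\|\cdot\|_F \leq (\mathbb{E}\|\cdot\|_F^2)^{1/2}$, this gives the requirement of Problem~\ref{prob:amselgen} in expectation (in the root-mean-square sense).

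Since $(1+\varepsilon)^2 \geq 1+2\varepsilon$, it suffices to have $k/(q-k-1) \leq 2\varepsilon$, which is equivalent to $q \geq k/(2\varepsilon) + k + 1$. We must also confirm $q \geq k+2$ so that Theorem~\ref{thm:amselgen} applies. This holds when $k/(2\varepsilon) \geq 1$. In the degenerate case where it fails, one can note that $q$ is an integer strictly exceeding $k+1$ whenever $k \geq 1$ and $\varepsilon > 0$ make $k/(2\varepsilon) > 0$, so $q \geq k+2$ holds anyway. The case $k=0$ is trivial, since then $\bld{\tilde{A}} = \bld{P}[\bld{A}] = 0$.

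No step is a real obstacle. The only point requiring care is interpreting ``solves in expectation'' as a bound on the second moment and using the $(1+\varepsilon)^2 \ge 1+2\varepsilon$ relaxation, which is what produces the factor $2$ in $k/(2\varepsilon)$.
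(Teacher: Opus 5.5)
Your proposal is correct and follows the paper's proof essentially step for step: the row-wise Pythagorean identity from $\tilde{\bld{A}} = \bld{P}[\tilde{\bld{A}}]$, Theorem~\ref{thm:amselgen} applied to the term $\|\bld{P}[\bld{A}]-\tilde{\bld{A}}\|_F^2$, and then Jensen's inequality together with $1+2\varepsilon \le (1+\varepsilon)^2$. Your explicit check that $q \ge \frac{k}{2\varepsilon}+k+1$ forces the integer $q$ to satisfy $q \ge k+2$ (with the case $k=0$ trivial) is a small point the paper leaves implicit.
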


\begin{proof}
Because $\bld{P}_i$ is an orthogonal projection and $\bld{\tilde{A}}_{i,:} = \bld{\tilde{a}}_i = \bld{P}_i \bld{\tilde{a}}_i$, we have
\[
    \| \bld{a}_i - \bld{\tilde{a}}_i\|^2_2 = \| \bld{a}_i - \bld{P}_i \bld{a}_i\|_2^2 + \| \bld{P}_i \bld{a}_i - \bld{\tilde{a}}_i\|_2^2.
\] 
Summing over rows gives $\| \bld{A} - \bld{\tilde{A}}\|_F^2  = \| \bld{A} - \bld{P} [\bld{A}]\|_F^2 + \| \bld{P} [\bld{A}] - \bld{\tilde{A}}\|_F^2$.
Taking $q \geq k+2 $ in Theorem~\ref{thm:amselgen} yields
\[
    \mathbb{E}\left[\| \bld{A} - \bld{\tilde{A}}\|_F^2\right] 
    \leq \left( 1 + \frac{k}{q-k-1} \right) \| \bld{P}[\bld{A}] - \bld{A}\|_F^2.
\]
Closely following \cite{amsel2026fixed} and taking $q \geq \frac{k}{2\varepsilon} + k + 1$ queries yields
\[
    \left(\mathbb{E}[\| \bld{A} - \bld{\tilde{A}}\|_F]\right)^2 
    \leq \mathbb{E}[\| \bld{A} - \bld{\tilde{A}}\|_F^2] 
    \leq \left(1 + 2 \varepsilon\right)\| \bld{P}[\bld{A}] - \bld{A}\|_F^2 
    \leq \left( 1 + \varepsilon\right)^2\| \bld{P}[\bld{A}] - \bld{A}\|_F^2
\]
where the first inequality follows from Jensen's inequality and the last inequality follows from the fact that $\sqrt{1 + 2 \varepsilon} < 1 + \varepsilon$ for all $\varepsilon > 0$.
\end{proof}

\begin{remark}
A number of queries $q = \mcal{O}(k)$  can recover the matrix in expectation within a constant factor of the best possible error induced by row-wise projections of $\bld{A}$ onto the pre-determined subspaces. For example, use $q = 2k + 1$ queries in Theorem~\ref{thm:amselgen}, and then $\mathbb{E}[\|  \bld{A} - \bld{\tilde{A}}\|_F^2] \leq 2 \| \bld{A} - \bld{P} [\bld{A}]\|_F^2.$
\label{remarkafteramselgen}
\end{remark}

While enlarging the subspaces $\mcal{V}_i$ can reduce the row residuals $\| \bld{P}_i \bld{a}_i - \bld{a}_i\|_2$, it also increases the dimension bound $k$ and therefore the required number of queries. 
Thus, knowing accurate low-dimensional subspaces for each row leads to query-efficient approximation.
In the sequel we use this principle to obtain query-efficient approximations for a class of linear integral operators whose assumed regularity provides low-dimensional subspaces that are known a priori to well-approximate each horizontal slice, i.e., ``row'', of the kernel.
We note that because the subspaces $\mcal{V}_i$ vary from row-to-row, the approximation need not be low-rank.
Indeed, the class of operators we approximate can have algebraically decaying singular values, yet we leverage exponentially-convergent approximations of its horizontal slices in subspaces that vary slice-to-slice, as we explain below.

\section{Approximating Asymptotically Smooth Integral Operators}

\begin{figure}
    \begin{tcolorbox}[height from= 4cm to \maxdimen, title = (\RomanNumeralCaps{1}) FE Discretization (\textit{section} \ref{section:fediscretization})]%
    \begin{minipage}[c]{0.62\textwidth}
    Discretize the integral operator $\mcal{A}:L^2(\Omega) \rightarrow L^2(\Omega)$ with piecewise constant polynomials $\{\theta_i\}_{i=1}^{|\mcal{T}_h|}$ on a finite element mesh of mesh size $h$ with mesh cells $\{\mcal{X}_i\}_{i=1}^{|\mcal{T}_h|}$. 
    The problem reduces to approximating a matrix $\bld{A}_h \in \R^{|\mcal{T}_h| \times |\mcal{T}_h|}$ where the $i$-th row is determined by the kernel on $\mcal{X}_i \times \Omega$.
  \end{minipage}
  \hfill
   \begin{minipage}[c]{0.33\textwidth}
          \centering
          \includegraphics[width= 0.9\textwidth]{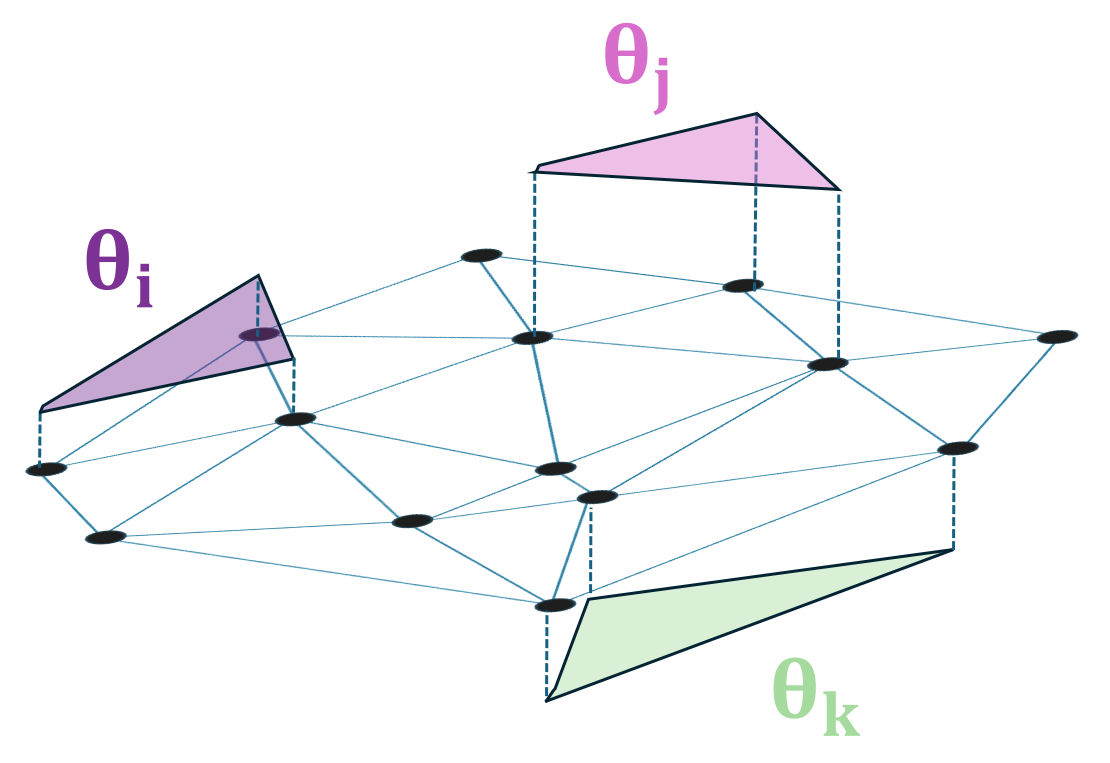}%
    \end{minipage}
\end{tcolorbox}
\vspace{-7mm}
\begin{tcolorbox}[title = (\RomanNumeralCaps{2}) Subspace Construction for each Row (\textit{section} \ref{rowspaceconstructionsection}), code={\singlespacing}]
\begin{minipage}[c]{0.62\textwidth}
 1. Partition $\mcal{X}_i \times \Omega$ into a hierarchical decomposition of $\mcal{O}(\log{(\varepsilon^{-1})})$ admissible domains and $\mcal{O}(1)$ small non-admissible domains. 
\end{minipage}
\hfill
\begin{minipage}[c]{0.33\textwidth}
    \centering
    \includegraphics[width =  \linewidth]{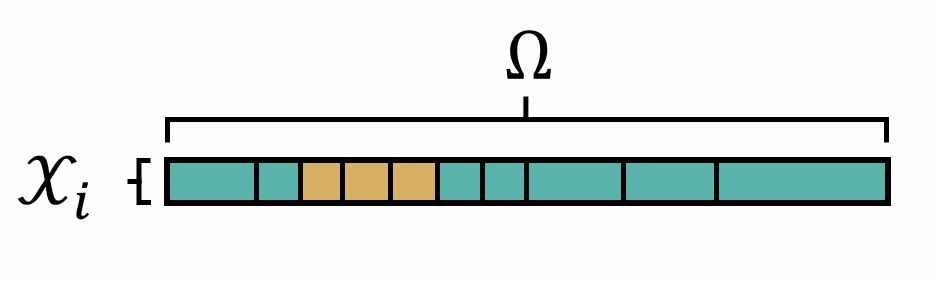}
\end{minipage}

\raggedright

\tcbline
2. Approximate the kernel by low-rank separable polynomial expansions on admissible sub-domains $\mcal{X}_i \times \mcal{Y}$. 

    \centering
    \includegraphics[width=0.5\linewidth]{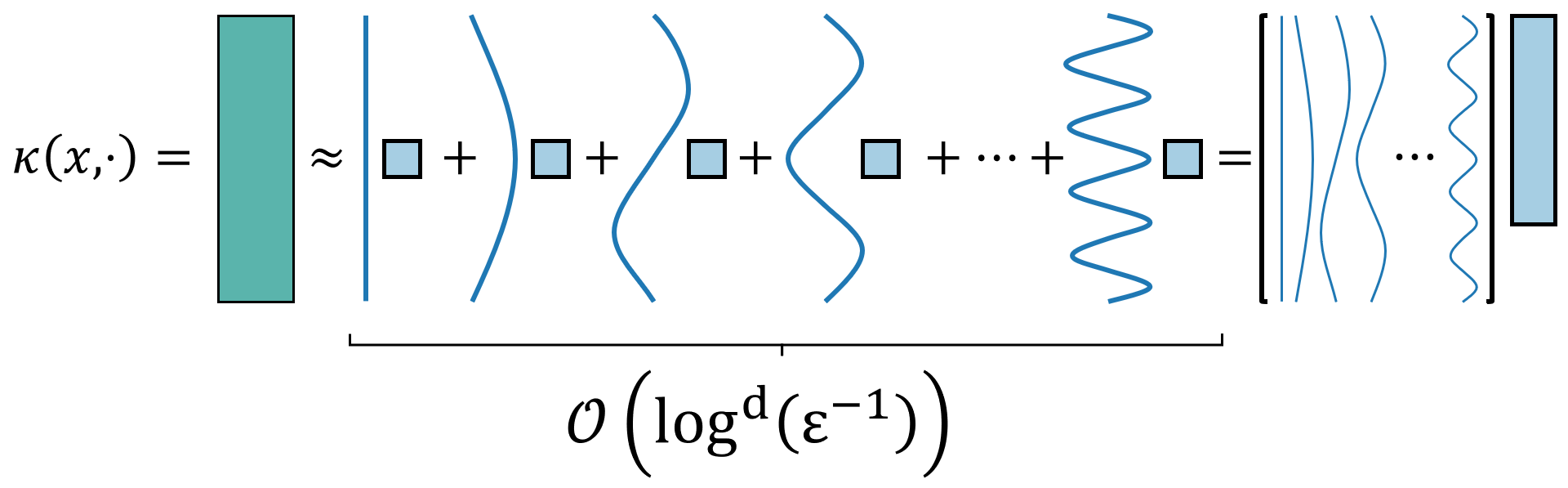}

\tcbline
\begin{minipage}[c]{0.62\textwidth}
3. Construct a $\mcal{O}(\text{log}^{d+1}(\varepsilon^{-1}))$-dimensional subspace $\mcal{V}_i$, spanned by  polynomial basis functions on the admissible domains and only a constant number of finite elements on the non-admissible domains, that approximates the $i$-th row of the discretized operator up to a desired error tolerance $\varepsilon$.
\end{minipage}
\hfill
\begin{minipage}[c]{0.33\textwidth}
    \centering
    \includegraphics[width =  0.95 \linewidth]{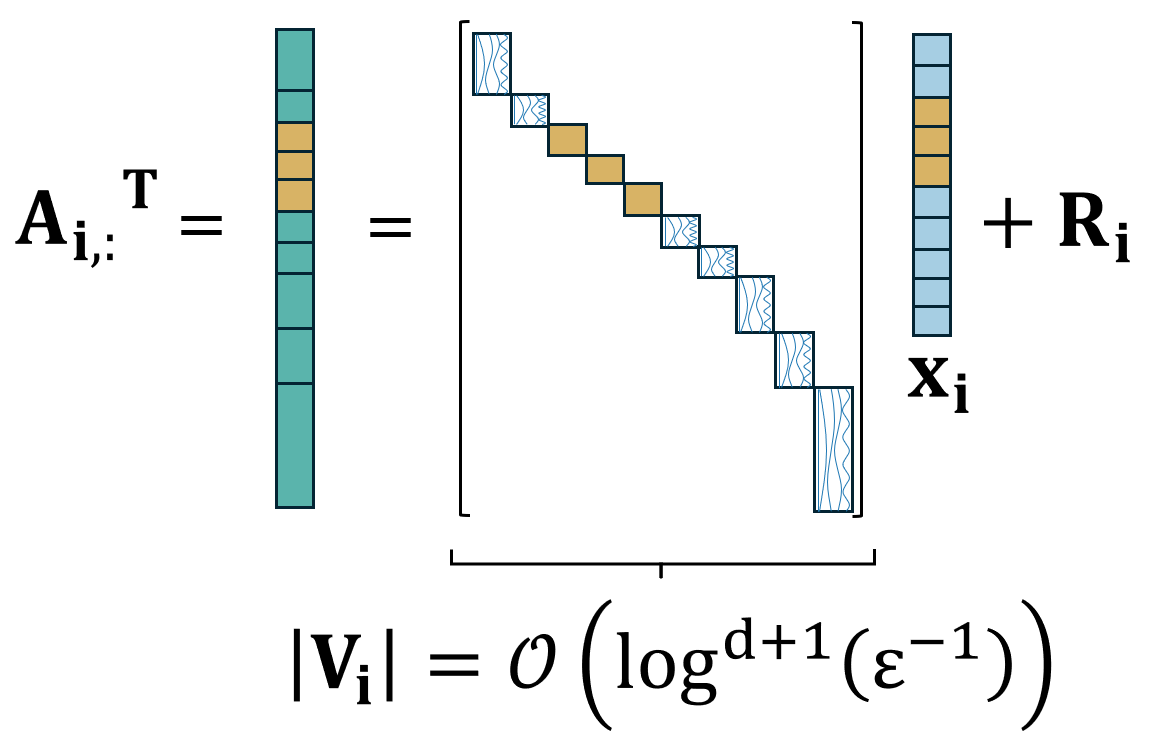}
\end{minipage}
\end{tcolorbox}
\vspace{-4.5mm}
\begin{tcolorbox}[title = (\RomanNumeralCaps{3}) Apply Matrix-Approximation Algorithm]
\begin{minipage}[c]{0.62\textwidth}
    Form subspaces $\mcal{V}_i$ for each row $i = 1, \dots, |\mcal{T}_h|$ of $\bld{A}_h$. 
    Query $\mcal{A}$ with $\mcal{O} \left( \rho^{-1}\log(\varepsilon^{-1})
\log^d(\varepsilon^{-1}\delta^{-1/2})\right)$  functions with standard Gaussian random coefficients in the finite element space to approximate $\bld{A}_h$ using Algorithm~\ref{alg:amselgen}. This gives a finite rank $\varepsilon$-approximation $\tilde{\mcal{A}}$ satisfying Eqn.(\ref{probboundintheorem}) with probability at least $1 - \delta$.
\end{minipage}
\hfill
\begin{minipage}[c]{0.33\textwidth}
    \centering
    \includegraphics[width = 0.95 \linewidth]{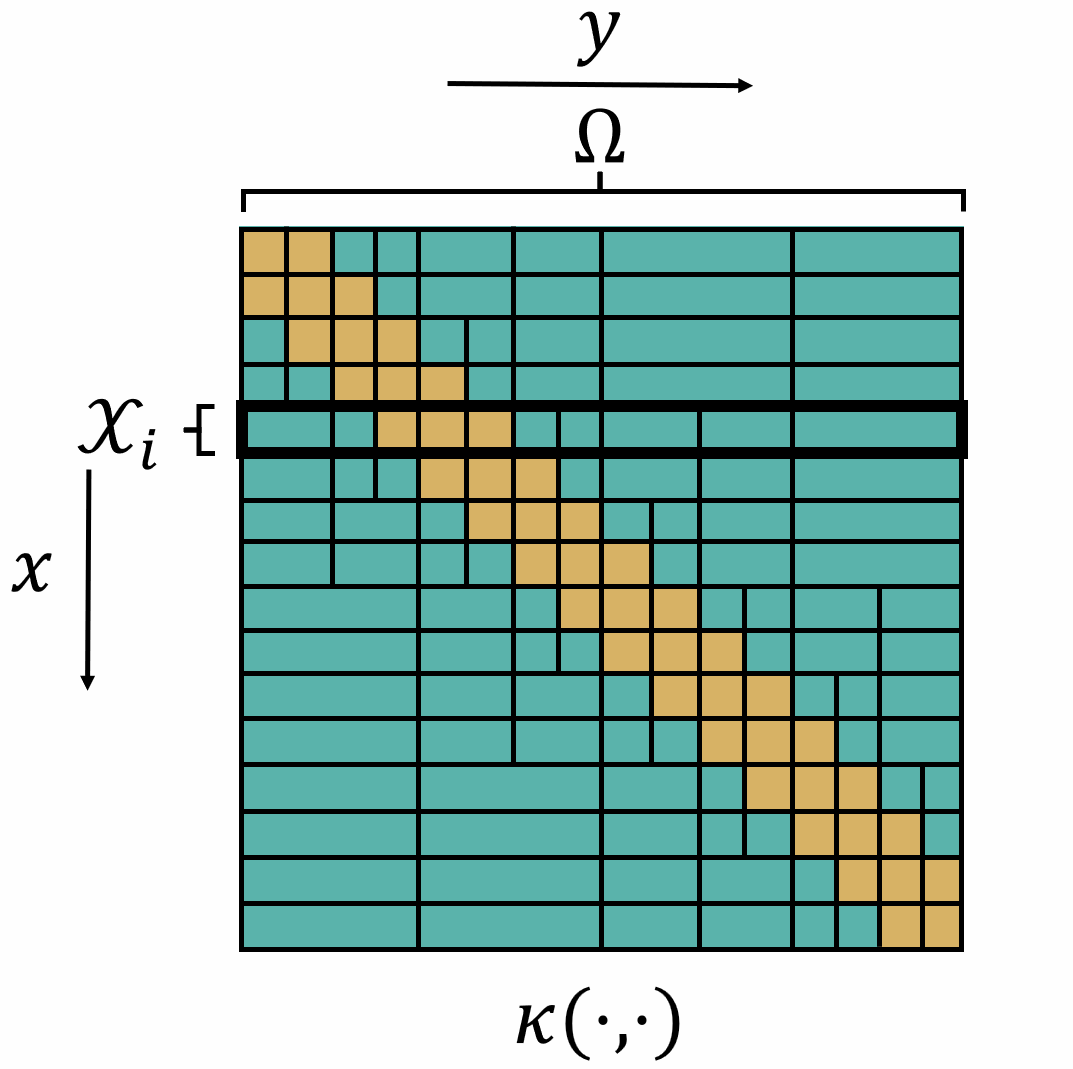}
\end{minipage}
\end{tcolorbox}
    \caption{Explanatory Sketch of Algorithm~\ref{adjointfreealg}}
    \label{roadmap}
\end{figure}

We introduce Algorithm~\ref{adjointfreealg} to approximate a class of Hilbert-Schmidt integral operators
\begin{equation}\label{eqn:integral_operator}
    \mcal{A}: L^2(\Omega) \to L^2(\Omega), \qquad
    \mcal{A}f(x) = \int_{\Omega} \kappa(x,y) f(y) \dd y
\end{equation}
with off-diagonal smoothness properties
using only forward queries $f \mapsto \mcal{A} f$ for a bounded domain $\Omega \subset \R^d$.
After giving the specific assumptions defining this class of possibly non-self-adjoint operators, we state Theorem~\ref{bigtheorem}.
This result shows that the algorithm produces finite dimensional approximations of these operators with $\varepsilon$-accuracy in Hilbert-Schmidt norm using a number of queries that grows only polylogarithmically with the inverse error tolerance $\varepsilon^{-1}$.
This implies a stretched exponential convergence rate with respect to the number of queries we are allowed to access, as we formalize in Corollary~\ref{corquerybudget}.
The result is significant because the operators being approximated can be non-self-adjoint and can have algebraic rates of singular value decay, yet Algorithm~\ref{adjointfreealg} converges at a superalgebraic rate without accessing adjoint queries $g \mapsto \mcal{A}^* g$.

A schematic overview of Algorithm~\ref{adjointfreealg} is shown in
Figure~\ref{roadmap}.
To apply the matrix approximation framework of
Section~\ref{amselgensection}, we represent the integral operator on a
piecewise-constant finite element space.
For $h\in\mcal{H}$, let
$\mcal{T}_h=\{\mcal{X}_i\}_{i=1}^{|\mcal{T}_h|}$
denote the quasi-uniform mesh from
Assumption~\ref{assumedomaingeometry}.
Quasi-uniformity implies
$|\mcal{T}_h|=\mcal{O}(h^{-d})$, as established in
Lemma~\ref{numberofmeshcells}.
For each mesh cell $\mcal{X}_i \in \mcal{T}_h$, define
\[
    \theta_i(x)
    := \frac{1}{\sqrt{\mathrm{vol}(\mcal{X}_i)}}
       \mathds{1}_{\mcal{X}_i}(x),
    \qquad
    \mcal{V}_h^0
    := \operatorname{span}
       \{\theta_i : i=1,\ldots,|\mcal{T}_h|\},
\]
where $\mathds{1}_{\mcal{X}_i}$ is the indicator function equal to $1$ on $\mcal{X}_i$ and $0$ everywhere else. 
Since the mesh cells have pairwise-disjoint interiors,
$\{\theta_i\}_{i=1}^{|\mcal{T}_h|}$ is an orthonormal basis for
$\mcal{V}_h^0\subset L^2(\Omega)$.
Let $J_h:\R^{|\mcal{T}_h|}\rightarrow L^2(\Omega)$ denote the
corresponding isometry $\bld{w} \mapsto \sum_{i=1}^{|\mcal{T}_h|}w_i\theta_i$ onto $\mcal{V}_h^0$.
Its adjoint maps a function to its coefficients in this basis, with
$(J_h^*v)_i = \langle v,\theta_i \rangle_{L^2(\Omega)}$.
Hence the $L^2(\Omega)$-orthogonal projection onto $\mcal{V}_h^0$ is
$P_h=J_hJ_h^*$, and the projected operator $P_h\mcal{A}P_h$ is
represented by the matrix
\[
    \bld{A}_h
    := J_h^*\mcal{A}J_h
    \in \R^{|\mcal{T}_h|\times|\mcal{T}_h|}.
\]
Thus, for a fixed mesh resolution $h$, approximating
$P_h\mcal{A}P_h$ reduces to approximating the matrix $\bld{A}_h$ from
forward matrix-vector products
$\bld{x}\mapsto J_h^*\mcal{A}(J_h\bld{x})$,
placing the problem in the setting of
Section~\ref{amselgensection}.

Algorithm~\ref{adjointfreealg} combines the finite-element representation above with structured row-space approximation to construct an adjoint-free approximation of $\mcal{A}$.
Theorem~\ref{bigtheorem} shows that, under our asymptotic smoothness and regularity assumptions, its parameters can be chosen to guarantee a prescribed Hilbert-Schmidt accuracy with high probability.
For a given mesh, the algorithm constructs low-dimensional subspaces $\bld{V}_i$ that approximate the rows of $\bld{A}_h$ using local polynomial approximations on a hierarchical partition of $\Omega$, as illustrated in Figure~\ref{roadmap}.
It then applies randomized forward queries of $\mcal{A}$, using functions with standard Gaussian coefficients in the piecewise-constant finite element basis, and invokes Algorithm~\ref{alg:amselgen} to recover an approximation $\tilde{\bld{A}}_h$ whose rows lie in the prescribed subspaces.
The corresponding operator approximation is
\[
    \tilde{\mcal{A}} = J_h \tilde{\bld{A}}_h J_h^\star.
\]
The construction depends on four parameters $(h,\eta,m,q)$, where $h$ is the mesh size, $\eta$ is the admissibility parameter governing the hierarchical partition, $m$ is the polynomial degree used on admissible subdomains, and $q$ is the number of non-adaptive Gaussian queries.
The hierarchical partition and row-space construction are implemented by subroutines developed in later sections.
Under the assumptions of Theorem~\ref{bigtheorem}, the theorem gives sufficient conditions on $(h,\eta,m,q)$ under which $\tilde{\mcal{A}}$ achieves an $\varepsilon$-accurate Hilbert-Schmidt approximation with probability at least $1-\delta$.

\begin{algorithm}
\caption{Adjoint-free approximation of asymptotically smooth integral operators} \label{adjointfreealg}
 \hspace*{\algorithmicindent}    
\begin{algorithmic}[1]\onehalfspacing

\Procedure{Adjoint-free Approximation}{ $f \mapsto \mcal{A}f, h, \eta, m,q$}

\State $\text{Form operators} \ J_h,J_h^*$  \color{gray} \algorithmiccomment{FE mapping operators}
\color{black}

\For {$i = 1,2, ..., |\mcal{T}_h|$} \do   

\State $\text{Select $i$-th mesh cell} \ \mcal{X}_i$  

\State Construct an $\eta$-hierarchical decomposition of $[0,1]^d$ w.r.t $\mcal{X}_i$ \color{gray} 
\algorithmiccomment{Lemmas~\ref{lemmaNAdomains},\ref{lemmaAdomains}}
\color{black} 

 $\left(\{\mcal{Y}_{i,j}\}_{j=1}^{N_\text{adm}(i)}, \{\mcal{N}_{i,p}\}_{p=1}^{N_\text{non}(i)} \right) = \textsc{Hierarchical Partition}(\mcal{X}_i,[0,1]^d, \eta, h)$ 
 \color{gray}
\algorithmiccomment{Alg. \ref{alghierarchicaldecomp}}
\color{black}

\State Form an orthonormal basis to approximate $i$-th row of $\bld{A}_h:=J_h^* \mcal{A} J_h$  \color{gray} 
\algorithmiccomment{Lemma~\ref{lemmarowsubspaceconstruction}}
\color{black} 

 $\bld{V}_i$ = \textsc{Row-Subspace Construction}$\left(\{\mcal{Y}_{i,j}\}_{j=1}^{N_\text{adm}(i)}, \{\mcal{N}_{i,p}\}_{p=1}^{N_\text{non}(i)}, m \right)$ \color{gray} 
\algorithmiccomment{Alg.  \ref{algconstructrowsubspace}}
\color{black}

\EndFor

\State Apply $q$ Gaussian queries to  $\bld{A}_h$ to recover it via row-wise least squares  \color{gray} 
\algorithmiccomment{Theorem~\ref{thm:amselgen}}
\color{black}

 $\bld{\tilde{A}}_h$ = \textsc{Rowwise-Matrix-Approximation} $\left( \bld{x} \mapsto J_h^* \mcal{A} (J_h \bld{x}), \bld{V}_1, ...,\bld{V}_{|\mcal{T}_h|}, q \right)$ \color{gray} 
\algorithmiccomment{Alg.  \ref{alg:amselgen}}
\color{black}

\Return $\tilde{\mcal{A}} =  J_h \bld{\tilde{A}}_h J_h^*$
\EndProcedure
\end{algorithmic}
 \hspace*{\algorithmicindent} \textbf{Output:} Adjoint-free reconstruction $\tilde{\mcal{A}}$ satisfying \[\mathbb{P}[\| \mcal{A} - \tilde{\mcal{A}}\|_{HS:L^2 \rightarrow L^2} \leq  \varepsilon \left( \| \mcal{A}\|_{HS:L^2 \rightarrow H^\rho} + \| \mcal{A}^*\|_{HS:L^2 \rightarrow H^\rho} \right)] \geq 1 - \delta\]
 for $q,m,h,\eta$ specified in Theorem~\ref{bigtheorem}
\end{algorithm}

The following assumptions define the class of integral operators where we guarantee the performance of Algorithm~\ref{adjointfreealg}.
 Assumption~\ref{assumedomaingeometry} enforces suitable conditions on the physical domain geometry and finite element mesh.
 We do not prescribe a particular construction of the finite element mesh.
 Instead, Assumption~1\ref{assumefemmesh} assumes that $\Omega$ admits a family of exact, quasi-uniform meshes ensuring geometric control of the mesh cells. 
 Exact curved mesh constructions for sufficiently regular domains are available in the finite element literature; see, for example, \cite{bernardi1989optimal,dubois1990discrete,kawecki2020finite}. 
 This formulation allows for different mesh constructions to be used provided they satisfy the stated geometric bounds. 
 The rescaling in 1\ref{assumerescaling} is a matter of convenience, as it is simpler to partition the cube $[0,1]^d$ and then restrict the partition to $\Omega$ than to partition $\Omega$ directly. 
 Furthermore, Assumption~1\ref{assumedomaincondition} ensures that restricting a cube in $[0,1]^d$ to $\Omega$ is a finite union of convex, hence locally Lipschitz components; see Remark~\ref{rmkconvexlocallipschitz} below.

 
\begin{assumption}[Domain \& discretization geometry]
\label{assumedomaingeometry}
~
\begin{enumerate}[(a)]
    \item \label{assumedomaincondition} 
  Let $\Omega \subset \R^d$ be a bounded Lipschitz domain. 
  Assume that $\overline{\Omega}$ is the union of finitely many compact convex sets $\mcal{C}_1, \ldots, \mcal{C}_{N_{cc}}$ with pairwise-disjoint nonempty interiors.
  \item \label{assumefemmesh} Assume $\Omega$ admits a sequence of quasi-uniform meshes $(\mcal{T}_h)_{h \in \mcal{H}}$ where each mesh $\mcal{T}_h$ exactly covers $\overline{\Omega}$ by cells with pairwise-disjoint interiors, and the sequence of mesh sizes $\mcal{H} = \{ h_n \}_{n=1}^{\infty}$ satisfies $h_n \to 0$ and the refinement condition $c_{\mcal{H}} h_n \leq h_{n+1} < h_n$ with fixed $c_{\mcal{H}} \in (0,1)$.
  With $h_{\mcal{X}} = \diam(\mcal{X})$ denoting the diameter of a mesh cell $\mcal{X}$, the diameter of the largest ball it contains being $\rho_{\mcal{X}}$, and $h = \max_{\mcal{X}\in \mcal{T}_h} h_{\mcal{X}}$ being the mesh size, we assume there is a shape-regularity constant $\sigma_\#$ (Definition 11.2 in \cite{ern2021finite}) and a quasi-uniformity constant $c_\text{quasi}$ (Definition 22.20 in \cite{ern2021finite}) such that $\sigma_{\mcal{X}} := \frac{h_{\mcal{X}}}{\rho_{\mcal{X}}} \leq \sigma_{\#}$ and $\frac{h_{\mcal{X}}}{h} \geq c_\text{quasi}$ for all $\mcal{X}\in\mcal{T}_h$ and all $h\in\mcal{H}$.
  \item \label{assumerescaling}Without loss of generality, we assume $\Omega \subset [0,1]^d$ and $\diam(\Omega) = 1$.
\end{enumerate}
\end{assumption}

\begin{remark}
    Open bounded convex domains of $\R^d$ have local Lipschitz boundary via Corollary 1.2.2.3 in \cite{grisvard2011elliptic}. Compactness of the boundary ensures the domain satisfies the strong local Lipschitz condition defined in Section 4.9 of \cite{adams2003sobolev}. 
    \label{rmkconvexlocallipschitz}
\end{remark}

\begin{remark}[Approximate meshes]
Assumption~1\ref{assumefemmesh} requires the meshes to cover $\Omega$ exactly. 
The analysis could also be extended to meshes that approximate $\Omega$, i.e. 
\[
    \Omega_h := \bigcup_{i=1}^{|\mcal{T}_h|} \mcal{X}_i \subset \Omega.
\]
In this case, the piecewise-constant finite element space may be viewed as a subspace of $L^2(\Omega)$ by zero extension on $\Omega \setminus \Omega_h$, and $P_h$ denotes the corresponding $L^2(\Omega)$-orthogonal projection. 
It suffices to establish the approximation estimate
\[
    \| (I-P_h)v\|_{L^2(\Omega)} \leq c h^\rho \|v \|_{H^\rho(\Omega)}
\]
for all $\rho \in (0,1), v \in H^\rho(\Omega)$ and $h \in \mcal{H}$, with $c$ independent of $\rho$ and $h$.  
For approximating meshes, this estimate contains an additional term $\|v\|_{L^2(\Omega \setminus \Omega_h)}$ measuring the approximation of the domain $\Omega$ by $\Omega_h$. 
Since controlling this term is independent of the structured matrix-approximation arguments developed here, we restrict the presentation to exact meshes.   
\end{remark}

The key regularity properties defining the relevant class of integral operators are stated below.
\begin{assumption}[Regularity]
    \label{assumeoperatorregularity}
    We assume the integral operator in Equation~\ref{eqn:integral_operator} satisfies:
    \begin{enumerate}[(a)]
        \item  $\mcal{A}$ and $\mcal{A}^*$ are bounded as operators from $L^2(\Omega)$ into a Sobolev space $H^s(\Omega)$ for some $s > \frac{d}{2}$.\label{assumeglobalregularity}
        \item The kernel function $\kappa$ is $L^2$- asymptotically smooth in the $y$-variable, as given by Definition~\ref{def:l2asymptoticsmoothness}. \label{assumeasymptotic smoothness}
    \end{enumerate}
\end{assumption}

\begin{definition}[$L^2$ Asymptotic Smoothness]
\label{def:l2asymptoticsmoothness}
A kernel function $\kappa(\cdot, \cdot): \Omega \times \Omega \rightarrow \R$ is $L^2$- asymptotically smooth with respect to the $y$-variable if there exist constants $C,\gamma > 0$ such that the following holds. 
Let $\mcal{X} \subset \Omega$ be measurable and let $\mcal{Y} \subset \Omega$ be a bounded, connected Lipschitz domain satisfying $\dist(\mcal{X}, \mcal{Y}) > 0$. 
Define an enlargement $\widehat{\mcal{Y}} := \{z \in \Omega: \dist(z,\mcal{Y}) \leq \frac{1}{2}\dist(\mcal{X},\mcal{Y})\}$. 
Then, for every multi-index $\alpha \in \mathbb{N}_0^d$ and almost every $x \in \mcal{X}$, the function $\kappa(x,\cdot) \in L^2(\widehat{\mcal{Y}})$,
the weak derivative $\partial_y^\alpha \kappa(x,\cdot) \in L^2(\mcal{Y})$ and the following inequality is satisfied
\[\| \partial_y^\alpha \kappa(x,\cdot)\|_{L^2(\mcal{Y})} \leq \frac{C |\alpha|! \gamma^{|\alpha|}}{\text{dist}(\mcal{X},\mcal{Y})^{|\alpha|}} \| \kappa(x,\cdot)\|_{L^2(\widehat{\mcal{Y}})}.\]
\end{definition}

Assumptions 1\ref{assumedomaincondition} and 2\ref{assumeglobalregularity} automatically ensure the integral operator is Hilbert-Schmidt via a fractional-order extension of Maurin's Theorem (see Lemma 2.3 in \cite{kovacs2023hilbert}). 
Furthermore, they ensure $\mcal{A}$ and $\mcal{A}^*$ are Hilbert-Schmidt as operators from $L^2(\Omega)$ into any Sobolev space $H^{\rho}(\Omega)$ with $0 \leq \rho < s - \frac{d}{2}$.
We rely on this to deduce algebraic rates of convergence for finite element approximations in the $L^2(\Omega) \to L^2(\Omega)$ Hilbert-Schmidt norm.
The global regularity in Assumption~2\ref{assumeglobalregularity} holds for solution operators of second-order uniformly elliptic PDEs in dimensions $d = 1$, $2$, and $3$ as a consequence of standard $L^2(\Omega) \to H^2(\Omega)$ regularity results \cite{renardy2004introduction}.
$L^2$ asymptotic smoothness (Definition~\ref{def:l2asymptoticsmoothness}) captures the ``smoothing at a distance'' properties expected from elliptic solution operators with sufficiently smooth coefficients.
Roughly speaking, it ensures that the kernel becomes progressively smoother in its second entry on domain pairs that are further apart.
Nevertheless, the definition permits kernels with low-regularity or even singular behavior along the diagonal $x=y$, as we discuss further in Section~\ref{sec:asymptoticsmoothness}.

\begin{tcolorbox}[code = {\singlespacing}]
    \begin{theorem}
Let Assumptions 1\ref{assumedomaincondition},\ref{assumefemmesh},\ref{assumerescaling} and 2\ref{assumeglobalregularity},\ref{assumeasymptotic smoothness} hold and let the admissibility constant $0 < \eta < \text{min}\{\frac{1}{\gamma d},1\}$.
There exist $\varepsilon_0, K_0, K^\prime_0$ with $\varepsilon_0 > 0$ such that the following holds for all $\varepsilon \in (0, \varepsilon_0), K \geq K_0, K^\prime \geq K_0^\prime, \delta \in (0,1)$, and $\rho \in (0,1)$ with $\rho < s - \frac{d}{2}$.
Let the finite element mesh size parameter be 
\[
    h:=\max\{h'\in\mcal H: h'\le \varepsilon^{2/\rho}\},\]
and let the polynomial degree on convex components of admissible subdomains satisfy
\[m  \geq  K \log \left( \varepsilon^{-1}\delta^{-1/2} \right). \]
If the number of queries is at least
\[q \geq \frac{K^{\prime}}{\rho}\log(\varepsilon^{-1}) m^d,\]
then Algorithm~\ref{adjointfreealg} produces an approximation $\tilde{\mcal{A}} = Alg(f \mapsto \mcal{A} f, q, m, h, \eta)$ satisfying
\begin{equation}
\label{probboundintheorem}
\mathbb{P} \left( \| \mcal{A} - \tilde{\mcal{A}}\|_{HS:L^2 \rightarrow L^2} 
\leq \varepsilon \left( \| \mcal{A}\|_{HS:L^2 \rightarrow H^\rho} + \| \mcal{A}^*\|_{HS:L^2 \rightarrow H^\rho} \right) \right)
\geq 1 - \delta.\end{equation}
 \label{bigtheorem}
\end{theorem}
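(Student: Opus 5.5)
The proof splits the error into a discretization part and a matrix-approximation part. Write
\[
\mcal{A} - \tilde{\mcal{A}} = (\mcal{A} - P_h\mcal{A}P_h) + J_h(\bld{A}_h - \tilde{\bld{A}}_h)J_h^*,
\]
so that, since $J_h$ is an isometry, $\|\mcal{A}-\tilde{\mcal{A}}\|_{HS} \le \|\mcal{A}-P_h\mcal{A}P_h\|_{HS} + \|\bld{A}_h-\tilde{\bld{A}}_h\|_F$.

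\textbf{Discretization term.} Write $\mcal{A}-P_h\mcal{A}P_h = (I-P_h)\mcal{A} + P_h\mcal{A}(I-P_h)$ and note $\|P_h\mcal{A}(I-P_h)\|_{HS} \le \|(I-P_h)\mcal{A}^*\|_{HS}$. Apply the piecewise-constant approximation estimate $\|(I-P_h)v\|_{L^2}\le c h^\rho\|v\|_{H^\rho}$ column by column in a Hilbert-Schmidt sum. This gives $\|(I-P_h)\mcal{A}\|_{HS}\le c h^\rho\|\mcal{A}\|_{HS:L^2\to H^\rho}$ and the same bound for $\mcal{A}^*$. The estimate holds for $\rho\in(0,1)$ on shape-regular convex-component meshes, with $c$ independent of $\rho$. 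By the choice of $h$ and the refinement condition, $h^\rho\le\varepsilon^2$, so this term is $\mcal{O}(\varepsilon^2)$ times the norm sum, comfortably below $\varepsilon/2$ for $\varepsilon<\varepsilon_0$. The same choice also fixes $|\mcal{T}_h| = \mcal{O}(h^{-d}) = \mcal{O}(\varepsilon^{-2d/\rho}\,c_{\mcal H}^{-d})$ by Lemma~\ref{numberofmeshcells}.

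\textbf{Row-space residual.} For each cell $\mcal{X}_i$, use the hierarchical partition (Lemmas~\ref{lemmaNAdomains}, \ref{lemmaAdomains}). It produces $\mcal{O}(\log h^{-1}) = \mcal{O}(\rho^{-1}\log\varepsilon^{-1})$ admissible blocks $\mcal{Y}_{i,j}$ and $\mcal{O}(1)$ nearby non-admissible cells. On each convex component of an admissible block, I project $\kappa(x,\cdot)$ onto polynomials of degree $m$ in $y$; that is the subspace in Lemma~\ref{lemmarowsubspaceconstruction}.
\begin{itemize}
\item \emph{Polynomial error.} A Taylor or Bramble--Hilbert bound on convex sets, combined with Definition~\ref{def:l2asymptoticsmoothness}, gives an $L^2(\mcal{Y})$ error of order $C(\gamma d\,\eta)^{m}\|\kappa(x,\cdot)\|_{L^2(\widehat{\mcal{Y}})}$. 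Admissibility gives $\diam\mcal{Y}\le\eta\dist(\mcal{X},\mcal{Y})$, and the factor $d$ comes from counting multi-indices. This is exponentially small because $\eta<1/(\gamma d)$.
\item \emph{Overlap.} Summing over $j$ requires that the enlargements $\widehat{\mcal{Y}}_{i,j}$ overlap only boundedly, which is where the $\eta<1$ and geometric-level structure enters. Then $\sum_j\|\kappa(x,\cdot)\|^2_{L^2(\widehat{\mcal{Y}}_{i,j})}\lesssim\|\kappa(x,\cdot)\|^2_{L^2(\Omega)}$.
\item \emph{Non-admissible part.} The non-admissible cells are included exactly as finite element basis vectors, so they contribute no error.
\item \emph{Transfer to rows.} The $i$-th row of $\bld{A}_h$ is $J_h^*$ applied to $\int\theta_i(x)\kappa(x,\cdot)\,dx$, and projecting onto $\mcal{V}_h^0$ does not increase the error. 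Integrating over $x\in\mcal{X}_i$ and summing over $i$ then gives
\[
\|\bld{A}_h-\bld{P}[\bld{A}_h]\|_F \lesssim (\gamma d\eta)^m\|\mcal{A}\|_{HS}.
\]
\end{itemize}
The dimension bound is $k=\mcal{O}(\rho^{-1}\log(\varepsilon^{-1})\,m^d)$: polynomial dimension $\binom{m+d}{d}$, times a bounded number of convex components, times the number of blocks, plus $\mcal{O}(1)$. Note that $\|\mcal{A}\|_{HS}$ is bounded by the $H^\rho$ norm.

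\textbf{Probability step.} Combine Theorem~\ref{thm:amselgen} with Markov's inequality:
\[
\mathbb{P}\Big(\|\bld{P}[\bld{A}_h]-\tilde{\bld{A}}_h\|_F^2 > \tfrac{k}{\delta(q-k-1)}\,\|\bld{A}_h-\bld{P}[\bld{A}_h]\|_F^2\Big)\le\delta .
\]
With $q\ge 2k+1$ (ensured by taking $K'$ large) the factor is at most $\delta^{-1}$. Hence, with probability at least $1-\delta$, the total matrix error is at most $(1+\delta^{-1/2})\,C(\gamma d\eta)^m\|\mcal{A}\|_{HS}$. Choosing $m\ge K\log(\varepsilon^{-1}\delta^{-1/2})$ with $K\ge 1/\log(1/(\gamma d\eta))$ plus constants makes this at most $\varepsilon/2$ times the norm sum. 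Adding the discretization term completes the proof.

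\textbf{Main obstacle.} The hardest step is the row-space residual: turning the pointwise-in-$x$ $L^2$ asymptotic smoothness into a uniform exponential bound summed over all blocks. This needs care with convex pieces of $\mcal{Y}\cap\Omega$, with the enlargements $\widehat{\mcal{Y}}$ possibly extending beyond the block, and with the constant from multi-index counting. I would handle the overlap by a bounded-covering argument on the dyadic levels, and the approximation by an averaged Taylor polynomial on each convex component.
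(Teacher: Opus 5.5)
Your proposal is correct and follows the paper's proof essentially step for step. It uses the same split into $\|\mcal{A}-P_h\mcal{A}P_h\|_{HS}$ (bounded by the piecewise-constant $H^\rho$ estimate applied to both $\mcal{A}$ and $\mcal{A}^*$) plus $\|\bld{A}_h-\tilde{\bld{A}}_h\|_F$, the same hierarchical-partition row spaces with averaged Taylor polynomials on the convex pieces $\mcal{Y}_{i,j}\cap\mcal{C}_n$, bounded-overlap counting of the enlargements and exact inclusion of the near-diagonal elements, and Theorem~\ref{thm:amselgen} combined with Markov's inequality. The one difference is cosmetic: you apply Markov to the random part $\|\bld{P}[\bld{A}_h]-\tilde{\bld{A}}_h\|_F^2$ and add the deterministic residual, whereas the paper applies Markov to $\|\bld{A}_h-\tilde{\bld{A}}_h\|_F^2$ via the Pythagorean identity; both give the same $\delta^{-1/2}$ factor. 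One small caution: a generic Bramble--Hilbert constant would depend on the shape of the possibly sliver-like pieces $\mcal{Y}\cap\mcal{C}_n$, so the uniformly averaged Taylor expansion you settle on, as in Proposition~\ref{expconvsepexpansion}, is the right tool.
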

\end{tcolorbox}
The constants $\varepsilon_0, K_0,$ and $K_0'$ appearing in the theorem depend only on the fixed constants in the domain, mesh, regularity, and asymptotic-smoothness assumptions. In particular, they are independent of $\varepsilon, \delta,$ and $\rho$, and of the operator except through the constants $C$ and $\gamma$. 
Their precise dependencies are given later in the proof in Section~\ref{sectheoremproof}.
Theorem~\ref{bigtheorem} therefore uses only$\mcal{O}\left(\rho^{-1}\log(\varepsilon^{-1})
\log^d(\varepsilon^{-1}\delta^{-1/2}) \right)$ queries to the operator to achieve an $\varepsilon$-accurate approximation, demonstrating the query-efficiency of Algorithm~\ref{adjointfreealg}. 
When the discretization and the dimensions of the local approximation spaces are chosen in accordance with an available query budget, the error bound can be inverted to yield a stretched exponential rate in the number of Gaussian queries.
This consequence is formalized in Corollary~\ref{corquerybudget}. 

\begin{corollary} [Query Budget]
\label{corquerybudget}
    Assume the setting of Theorem~\ref{bigtheorem} with $\eta < \min\big\{\frac{1}{\gamma d}, 1\big\}$. There exist constants $\tilde{\varepsilon}_0, \tilde{K}_0,\tilde{K}^\prime_0$ such  that for all $\tilde{K} \geq \tilde{K}_0, \tilde{K}^\prime \geq \tilde{K}_0^\prime$, $\delta \in (0,1)$ and $\rho \in (0,1)$ with $\rho < s - \frac{d}{2}$, using a number of Gaussian queries 
\[
    q > q_0
        := \frac{\tilde{K}^\prime \tilde{K}^d}{\rho} \left[ \max \Big\{\log{(\tilde{\varepsilon}_0^{-1})}, \log{(\delta^{-1/2})}\Big\} \right]^{d+1}
\] 
with a finite element mesh size
\[
    h = \text{max}\Bigg\{h' \in \mcal{H}: h' \leq e^{-\frac{2}{\rho}\left(\frac{ q \rho}{\tilde{K}^\prime\tilde{K}^d} \right)^\frac{1}{d+1}}\Bigg\}
\]
and a polynomial degree of $m_0$ for each admissible cube restricted to a convex component of $\Omega$ 
\[
    m_0 = \min\Bigg\{m \in \mathbb{N}: m \geq (\tilde{K}-1)\left( \frac{q \rho}{\tilde{K}^\prime\tilde{K}^d} \right)^\frac{1}{d+1} \Bigg\}, 
\]
Algorithm~\ref{adjointfreealg} constructs a finite rank approximation $\tilde{\mcal{A}} = Alg(f \mapsto \mcal{A}f, q, m_0, h, \eta)$ such that 
\[
    \mathbb{P}\left( \| \mcal{A} - \tilde{\mcal{A}}\|_{HS:L^2 \rightarrow L^2}
        \leq e^{- \left( \frac{q \rho}{\tilde{K}^\prime\tilde{K}^d} \right)^\frac{1}{d+1}} \left(\| \mcal{A}\|_{HS:L^2 \rightarrow H^\rho} + \| \mcal{A}^*\|_{HS:L^2 \rightarrow H^\rho} \right) \right) 
        \geq 1 - \delta. 
\]
\end{corollary}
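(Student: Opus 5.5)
The plan is to derive the corollary directly from Theorem~\ref{bigtheorem} by reparametrizing: given the query budget $q$, define an effective tolerance
\[
    \varepsilon_q := \exp\Big(-\Big(\tfrac{q\rho}{\tilde K'\tilde K^d}\Big)^{\frac{1}{d+1}}\Big),
\]
and check that $(h,m_0,q)$ as prescribed in the corollary satisfy the hypotheses of Theorem~\ref{bigtheorem} with $\varepsilon=\varepsilon_q$ and suitable $K,K'$. We take $\tilde\varepsilon_0=\varepsilon_0$, $\tilde K_0$ and $\tilde K_0'$ chosen in terms of $K_0,K_0'$ (for instance $\tilde K_0 = 2K_0+1$, $\tilde K_0'=2^dK_0'$, adjusted as needed), and set $K=\tilde K-1$ and $K'=\tilde K'$ up to constant factors. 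With $L:=\log(\varepsilon_q^{-1}) = (q\rho/(\tilde K'\tilde K^d))^{1/(d+1)}$, the mesh condition $h=\max\{h'\in\mcal H: h'\le \varepsilon_q^{2/\rho}\}$ is exactly the one in the corollary, so that hypothesis holds automatically.

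First I would verify $\varepsilon_q<\varepsilon_0$ and relate $L$ to $\log(\delta^{-1/2})$. The condition $q>q_0$ is equivalent to $L>\max\{\log\tilde\varepsilon_0^{-1},\log\delta^{-1/2}\}$. This gives $\varepsilon_q<\tilde\varepsilon_0=\varepsilon_0$, and also $\log(\delta^{-1/2})<L$. Consequently
\[
    \log(\varepsilon_q^{-1}\delta^{-1/2}) < 2L .
\]

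Second, I would verify the degree condition. Since $m_0\ge(\tilde K-1)L$ and $\log(\varepsilon_q^{-1}\delta^{-1/2})<2L$, we get $m_0\ge \tfrac{\tilde K-1}{2}\log(\varepsilon_q^{-1}\delta^{-1/2})$. It therefore suffices to take $K=(\tilde K-1)/2\ge K_0$, which holds whenever $\tilde K\ge 2K_0+1$.

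Third, and this is the main obstacle, I would check the query condition $q\ge \tfrac{K'}{\rho}\log(\varepsilon_q^{-1})m_0^d$. Since $m_0$ is the ceiling of $(\tilde K-1)L$ and $L\ge \log\tilde\varepsilon_0^{-1}$, we may assume $\tilde\varepsilon_0$ is small enough that $L\ge1$. Then $m_0\le(\tilde K-1)L+1\le \tilde K L$, so that
\[
    \frac{K'}{\rho}\,L\,m_0^d\le \frac{K'\tilde K^d}{\rho}L^{d+1}=\frac{K'}{\tilde K'}\,q .
\]
This is at most $q$ provided $K'\le\tilde K'$. Hence we can set $K'=\tilde K'$, and the requirement $K'\ge K_0'$ holds once $\tilde K_0'\ge K_0'$. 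This step is where the precise rounding of $m_0$ must be absorbed, and it is the reason the $\tilde K^d$ factor (rather than $(\tilde K-1)^d$) appears in the budget. If $\tilde\varepsilon_0$ cannot be shrunk freely, I would instead absorb the $+1$ by enlarging $\tilde K_0'$.

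Finally, Theorem~\ref{bigtheorem} applied with $\varepsilon=\varepsilon_q$ yields the stated probability bound, since $\varepsilon_q$ equals the exponential factor in the corollary.
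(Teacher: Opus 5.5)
Your proposal is correct and follows essentially the same route as the paper: define $\varepsilon_q$, take $\tilde\varepsilon_0=\varepsilon_0$, $\tilde K_0=2K_0+1$, $\tilde K_0'=K_0'$, verify the degree condition via $\log(\varepsilon_q^{-1}\delta^{-1/2})\le 2L$ and the query condition via $m_0\le \tilde K L$, then invoke Theorem~\ref{bigtheorem}. The differences are cosmetic: $L\ge 1$ is automatic because the paper's $\varepsilon_0\le 1/e$, and the paper checks the hypotheses with the minimal constants $K_0,K_0'$ rather than your $K=(\tilde K-1)/2$, $K'=\tilde K'$, which changes nothing.
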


\begin{proof}
    Proof in \ref{adxcorquerybudget}.
\end{proof}

The corollary identifies an asymptotic regime in which, after fixing the problem-dependent constants, increasing the query budget while refining the discretization and increasing the dimensions of the local approximation spaces yields stretched-exponential convergence.
A key structural ingredient enabling this query-efficiency is the asymptotic smoothness assumption of the kernel. 
The next section~\ref{sec:asymptoticsmoothness} therefore briefly places our formulation of asymptotic smoothness in the context of previous work and discusses its connection to hierarchical approximation and elliptic Green's functions. 
We then turn to numerical experiments in Section~\ref{sec:numericalexamples} for one-dimensional non-self-adjoint elliptic solution operators to investigate whether the predicted stretched exponential decay rate can be observed for moderate numbers of forward queries and how the observed convergence changes as the underlying PDE becomes more challenging. 

\subsection{Discussion on Asymptotic Smoothness}
\label{sec:asymptoticsmoothness}

Asymptotic smoothness is a regularity condition used to characterize the off-diagonal compressibility of kernels.
Several closely related formulations appear in the hierarchical-matrix and wavelet literature. 
The definition used by Bebendorf and Grzhibovskis in \cite{bebendorf2006accelerating} is of particular relevance. For fixed $x$, they call a kernel $\kappa(x,\cdot)$ asymptotically smooth with respect to the $y$-variable if, for $p = |\alpha|$ and $R = \|x-y\| / 2$,  
\[
    |\partial_y^\alpha \kappa(x,y)| \leq c p! \gamma^{p} R^{-p}\sup_{z \in B_R(y)} |\kappa(x,z)|.
\]
Thus, away from the singularity, higher derivatives are controlled by the magnitude of the kernel on a neighborhood whose size is proportional to the distance from $x$.
They prove asymptotic smoothness of the entries of the singularity matrix of constant coefficient elliptic systems on $\R^d$ based on an argument on a nested sequence of subdomains. 
Related notions of asymptotic smoothness arise elsewhere in the hierarchical matrix literature, including a formulation that holds for fundamental solutions of elliptic operators in \cite{hackbusch2015hierarchical} and a pointwise variant in \cite{bebendorf2008hierarchical}. 
Although the precise formulations differ among these definitions, their common feature is factorial derivative growth with an inverse power of the separation distance, which yields exponentially accurate polynomial approximations on well-separated domains. 

Definition~\ref{def:l2asymptoticsmoothness} is based on the same principle, but is formulated in an $L^2$-sense. 
Rather than bounding a derivative at a point by a local pointwise measure of the kernel, we compare the $L^2$-norm of $\partial^\alpha_y\kappa(x,\cdot)$ on a separated domain with the $L^2$ norm of $\kappa(x,\cdot)$ on a slightly enlarged neighborhood. 
This modification is natural for the present setting of this work for two reasons. 
First, the resulting approximation estimates integrate directly into the Hilbert-Schmidt error analysis used throughout this work. 
Second, $L^2$ derivative estimates of this form are closely aligned with standard interior elliptic regularity Caccioppoli estimates. 
To illustrate that the assumption is not merely technical, we verify the property for the Green function of the Poisson equation prescribed with homogeneous Dirichlet boundary conditions on the unit ball and present an argument for the Green function of a constant-coefficient elliptic operator on the flat torus. 

\subsubsection{Green's Functions for Poisson Equation}

Consider the Dirichlet problem on the unit ball
\[
    \begin{cases}
        -\Delta u = f \quad \text{in} \ B_1(0), \\
        u = 0 \quad \text{on} \ \partial B_1(0).
    \end{cases}
\]
Its solution admits the Green representation
\[
    u(x) = \int_{B_1(0)} G(x,y) f(y) dy
\]
for $x \in B_1(0)$. 
Let $\omega_d$ denote the volume of the $d$-dimensional unit ball. 
The fundamental solution of Laplace's equation is 
\[
    \Phi(x):= \begin{cases}
        \frac{-1}{2 \pi} \log{|x|}, \quad d = 2 \\
        \frac{1}{d(d-2) \omega_d} \frac{1}{|x|^{d-2}}, \quad d \geq 3
    \end{cases}
\]
defined for $x \in \mathbb{R}^n$ and $x \neq 0$.
The Green's function is defined as follows
\[
    G(x,y):= \Phi(y-x) - \Phi(|x|(y-x^\star)), \quad x,y \in B_1(0), \quad x \neq y
\]
where $x^\star = \frac{x}{|x|^2}$ denotes the Kelvin inversion point (see Section 2.2 Equation 41 in \cite{evans2022partial}).

\begin{proposition}
The Green's functions on the unit ball for dimensions $d = 2$ and 3 are $L^2$-asymptotically smooth.     
\end{proposition}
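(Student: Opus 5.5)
The plan is to verify Definition~\ref{def:l2asymptoticsmoothness} through interior elliptic regularity in the $y$-variable. For a.e.\ fixed $x\in\mcal{X}$, set $u(y) := G(x,y)$. The Green's function on the ball is symmetric, $G(x,y)=G(y,x)$, so $u$ is harmonic in $y$ on $B_1(0)\setminus\{x\}$, hence on $\widehat{\mcal{Y}}$ minus its intersection with $\partial B_1(0)$. The defining inequality will follow from a Caccioppoli/Cauchy-type estimate for harmonic functions with factorial growth. The point to watch is that $\widehat{\mcal{Y}}$ is cut off by $\Omega=B_1(0)$, so near the boundary we cannot use interior balls directly. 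We handle this with the Dirichlet condition $u=0$ on $\partial B_1$ and a reflection (Kelvin transform), using the explicit formula.

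\emph{Step 1 (interior points).} Let $r := \tfrac12\dist(\mcal{X},\mcal{Y})$. Then for every $y\in\mcal{Y}$ we have $B_r(y)\cap B_1 \subset \widehat{\mcal{Y}}$ and $x\notin B_r(y)$. For harmonic $u$ on a ball $B_R(y_0)$ we use the standard estimate $\|\partial^\alpha u\|_{L^2(B_{R/2}(y_0))} \le C\,|\alpha|!\,(c_d/R)^{|\alpha|}\,\|u\|_{L^2(B_R(y_0))}$. This can be proved either by iterating Caccioppoli on $|\alpha|$ nested balls with radii shrinking by $R/(2|\alpha|)$, which gives $(c|\alpha|/R)^{|\alpha|}\le c'^{|\alpha|}|\alpha|!/R^{|\alpha|}$ after Stirling, or more simply via the mean-value property: $|\partial^\alpha u(y)| \le (c_d|\alpha|/R)^{|\alpha|}\sup|u|$ (Evans, Thm.~2.2.7) combined with $\sup_{B_{R/2}}|u| \le C R^{-d/2}\|u\|_{L^2(B_R)}$. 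We then cover $\mcal{Y}$ by balls of radius $r/2$ with bounded overlap (Besicovitch) and sum the squared local estimates. Because of the bounded overlap, the total is controlled by $\|u\|^2_{L^2(\widehat{\mcal{Y}})}$ up to a dimensional constant. The $R^{-d/2}$ factors cancel between the two estimates since we stay in $L^2$.

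\emph{Step 2 (boundary points).} When $B_r(y)$ leaves $B_1$, we extend $u$ across the sphere by the Kelvin reflection $\tilde u(z) := -|z|^{2-d}u(z^\star)$ in $d=3$, and $\tilde u(z) := -u(z^\star)$ in $d=2$. Because $u$ vanishes on $\partial B_1$, the extension is harmonic in a two-sided neighbourhood of the sphere away from $x$ and $x^\star$. The pole $x^\star$ lies at distance at least comparable to $\dist(x,\partial B_1)$, and that is at least comparable to $r$ near $\mcal{Y}$. We then apply Step~1 to $\tilde u$ on full balls of radius comparable to $r$. Finally we bound $\|\tilde u\|_{L^2(B\setminus B_1)}$ by $C\|u\|_{L^2(\widehat{\mcal{Y}})}$ using the change of variables $z\mapsto z^\star$; its Jacobian and the weight $|z|^{2-d}$ are bounded near the sphere. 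This requires restricting to $y$ within distance, say, $1/4$ of the sphere. Points of $\mcal{Y}$ farther inside are interior points covered by Step~1, since there $r$ is necessarily small relative to the distance to the boundary, or we may shrink radii accordingly.

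\emph{Step 3 (integrability, and the main obstacle).} Integrability is straightforward: $u\in L^2(\widehat{\mcal{Y}})$ because $|\Phi|$ is $L^2_{\rm loc}$ for $d\le 3$, and the $L^2$ derivatives on $\mcal{Y}$ follow from smoothness away from $x$. The main obstacle is that the reflected region must be handled with constants $C,\gamma$ that are uniform in $x$ and in the geometry of $\mcal{Y}$. In particular, when $x$ approaches the boundary, $x^\star$ approaches $x$, and $\widehat{\mcal{Y}}\cap B_1$ may be a thin sliver. The first thing to try is to use the separation $|x^\star - y| \ge |x-y|$ for $y\in B_1$, which holds since the sphere is the perpendicular bisector locus for the inversion pair, scaled. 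This should show that the reflected pole never enters balls of radius $r$ around $y\in\mcal{Y}$. The image of such a ball under inversion lies within a comparable enlargement, and we absorb the factor into $\gamma$.
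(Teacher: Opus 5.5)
Your proposal is correct and is essentially the paper's proof. Your Kelvin-reflected $\tilde u$ coincides with the paper's harmonic continuation $\tilde G_x$ of the explicit formula to $\R^d\setminus\{x,x^\star\}$. The inequality $|x^\star-y|\ge|x-y|$ for $y\in B_1(0)$ keeps both poles out of the relevant neighbourhood of $\mcal{Y}$, iterated Caccioppoli estimates give the factorial growth, and the change of variables $z\mapsto z^\star$ (with $|z|<2$) bounds the exterior $L^2$ mass by a constant times $\|G(x,\cdot)\|_{L^2(\widehat{\mcal{Y}})}$. The only structural difference is that the paper iterates on nested tubular neighbourhoods of $\mcal{Y}$ rather than covering $\mcal{Y}$ by balls, which avoids an overlap count.

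One slip: in Step 2, $\dist(x,\partial B_1(0))$ is not comparable to $r$ in general (take $x$ near the sphere and $\mcal{Y}$ far from $x$). This does no harm, because your Step 3 inequality already makes $\tilde u$ harmonic on every $B_r(y)$ with $y\in\mcal{Y}$. Inversion also maps $B_r(y)\setminus B_1(0)$ into $B_r(y)\cap\overline{B_1(0)}$. So you need neither that comparability claim, nor the interior/boundary split, nor the restriction to points within $1/4$ of the sphere.
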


\begin{proof}
    Let $\mcal{X}, \mcal{Y} \subset B_1(0)$ such that $\text{dist}(\mcal{X}, \mcal{Y}) > 0$. 
    Fix $x \in \mcal{X}$. 
   
    For $x \neq 0$, extend $G(x,\cdot)$ to the function defined as 
    \[
        \tilde{G}_x(y) := \begin{cases}
        \frac{-1}{2\pi} \log{(|y-x|)} + \frac{1}{2\pi} \log{(|x||y - x^\star|)}, \quad d = 2 \\

        \frac{1}{3\omega_3} \left[ \frac{1}{|y-x|} - \frac{1}{|x| |y - x^\star|}  \right], \quad d = 3
        \end{cases} 
    \]
    on $\R^d \backslash \{x, x^\star\}$. 
    If $x=0$, set 
    \[
    \tilde{G}_0(y) = \begin{cases}
        \frac{-1}{2\pi} \log{|y|}, \quad d = 2 \\
        \frac{1}{3 \omega_3} \left( \frac{1}{|y|} - 1\right), \quad d = 3
    \end{cases} 
    \]
    by continuity.
    Importantly, properties of the fundamental solution yield that
    \[
        \Delta \tilde{G}_x = 0 \quad \text{on} \quad \R^d \backslash \{x,x^\star\},
    \]
    and $\tilde{G}_x \in C^\infty(\R^d \backslash \{x,x^\star\})$ for $x \neq 0$. Similarily for $x = 0$, $\tilde{G}_0 \in C^\infty(\R^d \backslash \{0\})$ and is harmonic on $\R^d \backslash \{0\}$.
    
    Let $r:= \text{dist}(\mcal{X}, \mcal{Y})>0$, and define
    $\widehat{\mcal{Y}}:= \{z \in B_1(0): \text{dist}(z,\mcal{Y}) \leq \frac{r}{2}\}$ together with  an open set $D_{\mcal{Y}} := \{z \in \R^d: \text{dist}(z,\mcal{Y}) < \frac{r}{2}\}$.
    In particular, $\widehat{\mcal{Y}} \subset \overline{D}_{\mcal{Y}}$ and $D_\mcal{Y} \cap B_1(0) \subset \widehat{\mcal{Y}}$.

    We first verify that $\overline{D_\mcal{Y}}$ contains neither of the singularities of $\tilde{G}_x$.
    Since $x \in \mcal{X}$, $\text{dist}(x, \mcal{Y}) \geq r$.
    Moreover, for $x \neq 0$ and $y \in B_1(0)$,
    \begin{equation}
        |x|^2|x^\star - y|^2 = |x-y|^2 + (1-|x|^2)(1-|y|^2) \geq |x-y|^2. 
        \label{eqn:inversiondistance}
    \end{equation}
    Thus for all $y \in \mcal{Y}$, $|x^\star - y| \geq |x-y| \geq r$ since $|x| < 1$. 
    Hence, $\text{dist}(x^\star, \mcal{Y}) \geq r$.
    It follows that
    $\overline{D}_{\mcal{Y}} \subset \R^d \backslash \{x, x^\star\}$ for $x \neq 0$. 
    When $x = 0$, the extension $\tilde{G}_0$ has only one singularity at the origin, and $\text{dist}(0,\mcal{Y}) \geq r$ gives $\overline{D_\mcal{Y}} \subset \R^d \backslash \{0\}$.
    In both cases, $\tilde{G}_x$ is smooth and harmonic on an open neighborhood of $\overline{D_\mcal{Y}}$.

    Let $p \geq 1$ and let $\alpha \in \mathbb{N}_0^d$ satisfy $|\alpha| = p$. 
    Write $\alpha = e_{i_1} + \ldots + e_{i_p}$ for some $i_1, \ldots, i_p \in \{1, \ldots, d\}$. 
    Define nested domains 
    \[
        D_j = \Big\{z \in \R^d: \text{dist}(z,\mcal{Y}) < (1 - \frac{j}{p})\frac{r}{2} \Big\}
    \]
    for $j = 0, \ldots, p-1$, and set $D_p := \mcal{Y}$. 
    Then, 
    \[
        \mcal{Y} = D_p \subset D_{p-1} \subset \ldots \subset D_1 \subset D_0 =D_\mcal{Y},
    \]  
    and by construction, $\text{dist}(\overline{D_j}, \partial D_{j-1}) \geq \frac{r}{2p}$ for $j = 1, \ldots, p$.
    We now iterate a Caccioppoli estimate through this sequence of domains. 
    Define $v_0 := \tilde{G}_x$ and $v_j:= \partial_{i_j} v_{j-1}$ for $j = 1, \ldots, p$. 
    Then, $v_p = \partial^\alpha \tilde{G}_x$.
    Since $\tilde{G}_x$ is smooth and harmonic on a neighborhood of $\overline{D_0}$, each
    $v_{j-1} \in H^1(D_{j-1})$ is harmonic on $D_{j-1}$ for $j \in \{1, \ldots, p\}$. Applying
    Lemma 3.2 of \cite{bebendorf2006accelerating} with the compact set $\overline{D}_j \subset D_{j-1}$ gives
    \begin{align}
        \|v_j\|_{L^2(D_j)} & = \| \partial_{i_j} v_{j-1} \|_{L^2(D_j)} \leq \|\nabla v_{j-1}\|_{L^2(\overline{D}_j)}  \leq  \frac{2p c_\text{cac}}{r} \|v_{j-1}\|_{L^2(D_{j-1})},
    \end{align}
where $c_\text{cac} = \sqrt{8d^2(1 + 8d)}$. 
Iterating this estimate yields 
\begin{align}
     \| \partial^\alpha G(x, \cdot) \|_{L^2(\mcal{Y})} & = \| \partial^\alpha \tilde{G}_x \|_{L^2(\mcal{Y})} = \| v_p \|_{L^2(D_p)} \nonumber \\
    & \leq \left( \frac{2pc_\text{cac}}{r}\right)^p \|v_0\|_{L^2(D_0)} \nonumber \\
    & \leq \frac{(2ec_\text{cac})^p p!}{r^p} \|\tilde{G}_x\|_{L^2(D_\mcal{Y})} \nonumber
\end{align}
using that $p^p \leq e^p p!$ for integer $p \geq 1$ in the last inequality.

It remains to bound $\|\tilde{G}_x\|_{L^2(D_\mcal{Y})}$ in terms of $\|G(x,\cdot)\|_{L^2(\widehat{\mcal{Y}})}$. 
Decomposing $D_\mcal{Y}$ into disjoint subdomains inside and outside the unit ball gives 
\[             
    \|\tilde{G}_x\|^2_{L^2(D_\mcal{Y})} = \|\tilde{G}_x\|^2_{L^2(D_\mcal{Y} \backslash B_1(0))} + \|\tilde{G}_x\|^2_{L^2(D_\mcal{Y} \cap B_1(0))}.
\]
Since $\tilde{G}_x(z) = G(x,z)$ for $z \in B_1(0)$ and $D_\mcal{Y} \cap B_1(0) \subset \widehat{\mcal{Y}}$, 
we obtain 
\[
    \|\tilde{G}_x\|_{L^2(D_\mcal{Y} \cap B_1(0))} \leq \| G(x,\cdot)\|_{L^2(\widehat{\mcal{Y}})}.
\]
We next consider the portion outside of the unit ball. 
For $|z| > 1$, $|z^\star| < 1$ and rearranging the explicit formula for $\tilde{G}_x(z)$ gives
\[
    \tilde{G}_x(z) = - |z|^{2-d} G(x,z^\star).
\]
The inversion map $z \rightarrow z^\star$ is injective on $\R^d \backslash \{0\}$ and maps $D_\mcal{Y} \backslash B_1(0)$ into $\widehat{\mcal{Y}}$.
For $|z| \geq 1$ and any $y \in B_1(0)$,  Equation~\ref{eqn:inversiondistance} applied to $z^\star$ implies $|z - y| \geq |z^\star -y|$.
Thus, if $z \in D_\mcal{Y} \backslash B_1(0)$, 
\[
    \text{dist}(z^\star, \mcal{Y}) \leq \text{dist}(z,\mcal{Y}) < \frac{r}{2}.
\]
Thus, $z^\star \in \widehat{\mcal{Y}}$. 
Moreover, the determinant of the Jacobian for changing variables from $z \in \R^d$ to $z^\star = \frac{z}{|z|^2}$ satisfies $|\text{det} D z^\star (z)| = |z|^{-2d} = |z^\star|^{2d}$, so
\[
    dz = |z|^{2d} dz^\star.
\]
Since $\mcal{X}, \mcal{Y} \subset B_1(0)$, $r \leq 2$. 
Then, $z \in D_\mcal{Y} \backslash B_1(0)$  implies that $|z| < 2$, and, in particular, $|z|^{4} < 16$. 
The change of variables to $z^\star$ gives
\begin{align}
    \int_{D_\mcal{Y} \backslash B_1(0)} |\tilde{G}_x(z)|^2 dz 
    & = \int_{D_\mcal{Y} \backslash B_1(0)} |z|^{4-2d} |G(x,z^\star)|^2 dz \nonumber \\
    & \leq 16 \int_{D_\mcal{Y} \backslash B_1(0)} |z|^{-2d} |G(x,z^\star)|^2 dz \nonumber \\
    & \leq \int_{\widehat{\mcal{Y}}} 16 |G(x, z^\star)|^2 dz^\star = 16 \|G(x,\cdot)\|_{L^2(\widehat{\mcal{Y}})}^2. \nonumber
\end{align}
Combining the interior and exterior estimates yields
\[
\|\tilde{G}_x\|_{L^2(D_\mcal{Y})}^2 \leq 17 \| G(x,\cdot) \|_{L^2(\widehat{\mcal{Y}})}^2.
\]
Consequently, for every $\alpha \in \mathbb{N}_0^d$ with $p := |\alpha| \geq 1$, 
\[
    \|\partial^\alpha G(x,\cdot)\|_{L^2(\mcal{Y})}  \leq \sqrt{17} \frac{(2ec_\text{cac})^p p!}{\text{dist}(\mcal{X},\mcal{Y})^p} \|G(x,\cdot)\|_{L^2(\widehat{\mcal{Y}})}.
\]
The case $p=0$ follows trivially from $\mcal{Y} \subset \widehat{\mcal{Y}}$.
Hence, $G$ is $L^2$-asymptotically smooth with $C=\sqrt{17}$ and $\gamma = 2ec_\text{cac}$.

\end{proof}
\begin{remark}
    For $d=1$, the Green's function is piecewise affine in the $y$-variable away from the diagonal $x=y$. Its $L^2$-asymptotic smoothness follows directly. 
    The Green's functions for $d \geq 4$ remain $L^2$-asymptotically smooth off the diagonal, but are not globally square-integrable on $B_1(0) \times B_1(0)$. 
\end{remark}
\subsubsection{Green's Functions for Constant-Coefficient Elliptic Operators on a Flat Torus}

We next give a concrete class of solution operators whose Green functions satisfy the $L^2$-asymptotic smoothness assumption in Definition~\ref{def:l2asymptoticsmoothness}. 
Define a second-order linear elliptic PDE with constant coefficients defined on the flat torus $\mathbb{T}^d$
\[
    \mcal{L} u := -\nabla \cdot (\bld{A} \nabla u) + \bld{b}\cdot \nabla u + c u = f
\]
with forcing $f \in L^2(\mathbb{T}^d)$.
In particular, the coefficient matrix $\bld{A} \in \R^{d \times d}$ is symmetric with all eigenvalues $\lambda$
\[
    0 < \lambda_\text{min} \leq \lambda \leq \lambda_\text{max}
\]
and $\bld{b} \in \R^d$ and $c > 0$.
Since $c > 0$, the associated bilinear form is coercive and $\mcal{L}$ is invertible. 
The adjoint operator is $\mcal{L}^\star = -\nabla \cdot (\bld{A} \nabla) - \bld{b}\cdot \nabla + c$. 
For an open subdomain $D \subset \mathbb{T}^d$, a function $u \in H^1(D)$ is a weak solution of $\mcal{L}^\star u = f$ on $D$ if 
\[
    a^\star(u,v) := \int_{D} \left(\bld{A} \nabla u \cdot \nabla v - \bld{b} \cdot \nabla u v + c u v \right)dx = \int_{D} f v dx
\]
for all $v \in H^1_0(D)$.

This setting isolates the interior regularity mechanism underlying asymptotic smoothness while avoiding any complications with boundary regularity.
The argument follows a similar strategy as \cite{bebendorf2006accelerating} by deriving a Caccioppoli inequality for $\mcal{L}^\star$-harmonic functions and iterating it on nested subdomains to obtain analytic growth bounds for arbitrary derivatives. 
These estimates are then applied to the Green function away from its singularity.
For $d \leq 3$, standard elliptic regularity together with the Hilbert-Schmidt embedding $H^2(\mathbb{T}^d) \hookrightarrow L^2(\mathbb{T}^d)$ imply the solution operator $\mcal{L}^{-1}: L^2(\mathbb{T}^d) \to L^2(\mathbb{T}^d)$ is Hilbert-Schmidt and admits a kernel in $L^2(\mathbb{T}^d \times \mathbb{T}^d)$. 
Rather than developing Green-function existence and regularity theory, we assume the standard off-diagonal properties stated explicitly in Proposition~\ref{prop: torusgreenfcn}. 
First, we establish the first-order Caccioppoli inequality. 

\begin{lemma}[Caccioppoli]
\label{lemma_caccio_zeroth}
    Let $D \subsetneq \mathbb{T}^d$ be open. 
    Let $K \subset D$ be a nonempty measurable set satisfying $\text{dist}(K, \partial D) > 0$. 
    Then for all $u \in H^1(D)$ such that $u$ is a weak solution of $\mcal{L}^\star u=0$, 
    \[\|\nabla u\|_{L^2(K)} \leq \left( \frac{4}{\text{dist}(K, \partial D)} \sqrt{\frac{\lambda_\text{max}}{\lambda_\text{min}}}  + \frac{\|\bld{b}\|}{\lambda_\text{min}} \right) \|u\|_{L^2(D)}.\]
    
\end{lemma}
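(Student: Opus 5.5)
The plan is the classical cutoff-function (Caccioppoli) argument: test the weak formulation of $\mcal{L}^\star u = 0$ with $v = \eta^2 u$, where $\eta$ is a cutoff equal to $1$ on $K$, and absorb the resulting cross terms. Set $\delta := \text{dist}(K,\partial D) > 0$. Since $D \subsetneq \mathbb{T}^d$, $\partial D$ is nonempty and $\delta$ is finite. Here $\dist$ is the quotient (geodesic) distance on the torus, and $z \mapsto \dist(z, \mathbb{T}^d\setminus D)$ is $1$-Lipschitz.

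\textbf{Step 1 (cutoff).} Define
\[
\eta(z) := \min\Big\{1,\max\Big\{0,\tfrac{2}{\delta}\dist(z,\mathbb{T}^d\setminus D) - 1\Big\}\Big\}.
\]
Then $0\le \eta\le 1$, $\eta = 1$ on $K$, and $\eta$ is Lipschitz with $|\nabla \eta| \le 2/\delta$ a.e. Its support lies in $\{\dist(\cdot,\partial D)\ge \delta/2\}$, which is a closed, hence compact, subset of $D$ because $\mathbb{T}^d$ is compact. The product rule for Lipschitz times $H^1$ functions gives $v=\eta^2 u\in H^1_0(D)$ with $\nabla v = \eta^2\nabla u + 2\eta u\nabla\eta$. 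Checking this admissibility carefully is the only technical point I expect to need care; the rest is algebra.

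\textbf{Step 2 (energy identity).} Plugging $v$ into $a^\star(u,v)=0$ gives
\[
\int_D \eta^2\, \bld{A}\nabla u\cdot\nabla u \;=\; -2\int_D \eta u\, \bld{A}\nabla u\cdot\nabla\eta \;+\; \int_D (\bld{b}\cdot\nabla u)\,\eta^2 u \;-\; c\int_D \eta^2 u^2 .
\]
Since $c>0$, I drop the last term. Let $X := \|\bld{A}^{1/2}\eta\nabla u\|_{L^2(D)}$, so that $X^2$ is the left-hand side.

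For the first term I use the Cauchy--Schwarz inequality in the $\bld{A}$-inner product, $|\bld{A}\nabla u\cdot\nabla\eta| \le (\bld{A}\nabla u\cdot\nabla u)^{1/2}(\bld{A}\nabla\eta\cdot\nabla\eta)^{1/2}$, together with $(\bld{A}\nabla\eta\cdot\nabla\eta)^{1/2}\le \sqrt{\lambda_\text{max}}\,2/\delta$. This bounds it by $\tfrac{4\sqrt{\lambda_\text{max}}}{\delta}X\|u\|_{L^2(D)}$.

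For the drift term I use $\eta\le1$ and $\|\eta\nabla u\|\le X/\sqrt{\lambda_\text{min}}$. This bounds it by $\tfrac{\|\bld{b}\|}{\sqrt{\lambda_\text{min}}}X\|u\|_{L^2(D)}$.

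\textbf{Step 3 (absorb and conclude).} Combining the two bounds gives
\[
X^2 \le \Big(\tfrac{4\sqrt{\lambda_\text{max}}}{\delta}+\tfrac{\|\bld{b}\|}{\sqrt{\lambda_\text{min}}}\Big)X\|u\|_{L^2(D)}.
\]
If $X=0$ there is nothing to prove; otherwise divide by $X$. Finally,
\[
\|\nabla u\|_{L^2(K)}\le\|\eta\nabla u\|_{L^2(D)}\le X/\sqrt{\lambda_\text{min}},
\]
which yields exactly the stated constant $\tfrac{4}{\delta}\sqrt{\lambda_\text{max}/\lambda_\text{min}}+\tfrac{\|\bld{b}\|}{\lambda_\text{min}}$.
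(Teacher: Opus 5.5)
Your proposal is correct and follows essentially the same argument as the paper. Both test $a^\star(u,\cdot)=0$ with the squared cutoff times $u$ and drop the nonnegative $c$-term. The cross term is bounded by the $\bld{A}$-weighted Cauchy--Schwarz inequality and the drift term via $\|\chi\nabla u\|\le \lambda_\text{min}^{-1/2}\|\chi\bld{A}^{1/2}\nabla u\|$; one then divides and passes to $\nabla u$ on $K$. The only difference is immaterial: you build an explicit Lipschitz cutoff from the distance to $\mathbb{T}^d\setminus D$, whereas the paper simply takes some $\chi\in C^1_c(D)$ with $|\nabla\chi|\le 2/r$.
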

\begin{proof}
Set $r := \text{dist}(K, \partial D)$.
Since $\overline{K}$ is a compact subset of $D$, 
let $\chi \in C_c^1(D)$ satisfy $0 \leq \chi \leq 1$, $\chi = 1$ in $K$ and $|\nabla \chi| \leq \frac{2}{r}$ in $D$. 
Since $\text{supp}(\chi) \subset D$, $\phi := \chi^2 u \in H_0^1(D)$ may be used as a test function in the weak formulation $a^\star(u,\phi) = 0$ as follows
\begin{equation*}
    \int_{D} \left(\bld{A}\nabla u \cdot \nabla(\chi^2u) - \bld{b}\cdot \nabla(u)\chi^2u + c u \chi^2 u \right) dx = 0.
\end{equation*}
Since $c \geq 0$ and $u^2 \chi^2 \geq 0$, we obtain that $\int_{D} \left(\bld{A}\nabla u \cdot \nabla(\chi^2u) - \bld{b}\cdot \nabla u\chi^2u \right) dx \leq 0.$
By property of $\bld{A}$ ($0 < \lambda_\text{min} \leq \lambda \leq \lambda_\text{max}$), 
\begin{align*}
    \int_{D} \chi^2 |\bld{A}^{\frac{1}{2}} \nabla u|^2 dx & = \biggl | \int_{D} \chi^2 (\nabla u)^T \bld{A} (\nabla u) dx \biggr | \nonumber \\
    & \leq \biggl | \int_{D} 2 \bld{A} \nabla u \cdot \nabla \chi u \chi dx \biggr | + \biggl | \int_{D} \bld{b} \cdot \nabla u u \chi^2 dx \biggr | \nonumber \\
    & := I_1 + I_2
\end{align*}
For $I_1$, 
\begin{align*}
    I_1 & \leq 2 \int_{D} \chi |u| \biggl | \bld{A}^\frac{1}{2} \nabla \chi \biggr | \biggl | \bld{A}^\frac{1}{2} \nabla u \biggr | dx 
     \leq 4 \frac{\sqrt{\lambda_\text{max}}}{r} \int_{D} |u| \left( \chi \biggl | \bld{A}^\frac{1}{2} \nabla u \biggr | \right) dx \nonumber \\
    & \leq 4 \frac{\sqrt{\lambda_\text{max}}}{r} \left( \int_{D} |\chi \bld{A}^\frac{1}{2} \nabla u|^2 dx \right)^\frac{1}{2} \|u\|_{L^2(D)}. \nonumber \\
\end{align*}
Also, for $I_2$, 
\begin{align*}
    I_2 & \leq \int_{D} \chi |u| \bigl | \langle \chi \nabla u , \bld{b} \rangle \bigr | dx  \leq \int_{D} \chi |u| \ \|\chi \nabla u\| \ \|\bld{b}\| dx  \nonumber \\
    & \leq  \frac{1}{\sqrt{\lambda_\text{min}}} \int_{D} |\chi u| \ \|\chi \bld{A}^\frac{1}{2} \nabla u \| \ \|\bld{b}\| dx  \leq \frac{\|\bld{b} \|}{\sqrt{\lambda_\text{min}}} \int_{D} |\chi u| \ || \chi \bld{A}^\frac{1}{2} \nabla u || dx \nonumber \\
    & \leq \frac{\|\bld{b}\|}{\sqrt{\lambda_\text{min}}}  ||\chi u||_{L^2(D)} \left( \int_D \|\chi \bld{A}^\frac{1}{2} \nabla u \|^2 dx \right)^\frac{1}{2}  \leq \frac{\|\bld{b}\|}{\sqrt{\lambda_\text{min}}}  ||u||_{L^2(D)} \left( \int_D \| \chi \bld{A}^\frac{1}{2} \nabla u \|^2 dx \right)^\frac{1}{2} \nonumber.
\end{align*}
Therefore, 
\begin{equation*}
    ||\chi \bld{A}^\frac{1}{2} \nabla u ||_{L^2(D)} \leq \left( 4 \frac{\sqrt{\lambda_\text{max}}}{r} + \frac{\|\bld{b}\|}{\sqrt{\lambda_\text{min}}} \right) \ ||u||_{L^2(D)},
\end{equation*} 
 and we obtain the estimate
\begin{equation*}
    ||\nabla u||_{L^2(K)} \leq ||\chi \nabla u ||_{L^2(D)} \leq \frac{1}{\sqrt{\lambda_\text{min}}} ||\chi \bld{A}^\frac{1}{2} \nabla u ||_{L^2(D)}.
\end{equation*}
 
\end{proof}

The following lemma proves via a nesting argument that arbitrary higher order derivatives of smooth $\mcal{L}^\star$-harmonic functions can be bounded with analytic growth bounds.

\begin{lemma}[Higher-order Caccioppoli]
    Let $D \subsetneq \mathbb{T}^d$ be open. 
    Let $K \subset D$ be a nonempty measurable set satisfying $\text{dist}(K, \partial D) > 0$. 
    Define $C_\mcal{L} = 4e \sqrt{\frac{\lambda_\text{max}}{\lambda_\text{min}}}  + \frac{e\|\bld{b}\| \text{diam}(\mathbb{T}^d)}{\lambda_\text{min}}$.
    Let $p \in \mathbb{N}$ such that $p\geq 1$, and suppose that $u \in C^\infty(D) \cap H^1(D)$ is a weak solution of $\mcal{L}^\star u=0$ in $D$.
    Then, for every multi-index $\alpha \in \mathbb{N}_0^d$ such that $|\alpha| = p$, 
\[
    \|\partial^\alpha u\|_{L^2(K)} 
    \leq \frac{\left(C_\mcal{L} \right)^p p!}{\text{dist}(K, \partial D)^p} \|u\|_{L^2(D)}.
\]  
\label{lemma:higherordercaccioppoli}
\end{lemma}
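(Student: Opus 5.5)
We mirror the nested-domain argument used above for the Poisson Green's function, with Lemma~\ref{lemma_caccio_zeroth} taking the place of Lemma~3.2 of \cite{bebendorf2006accelerating}. Set $r := \dist(K,\partial D) > 0$ and write $\alpha = e_{i_1} + \cdots + e_{i_p}$. Define the nested open sets
\[
    D_j := \Big\{ z \in D : \dist(z, \partial D) > \tfrac{j}{p}\, r \Big\}, \qquad j = 0, \ldots, p-1,
\]
and set $D_p := K$, so that $K = D_p \subset D_{p-1} \subset \cdots \subset D_0 = D$. By the triangle inequality, $\dist(D_j, \partial D_{j-1}) \geq r/p$ for $j = 1, \ldots, p$; here one also uses $\dist(K,\partial D) \geq r$ for the last step $j = p$. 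Let $v_0 := u$ and $v_j := \partial_{i_j} v_{j-1}$, so that $v_p = \partial^\alpha u$.

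\textbf{Step 1: each $v_{j-1}$ is $\mcal{L}^\star$-harmonic.} Since $\mcal{L}^\star$ has constant coefficients, it commutes with $\partial_{i}$. Because $u \in C^\infty(D)$ is a weak, hence classical, solution of $\mcal{L}^\star u = 0$, each derivative $v_{j-1}$ satisfies $\mcal{L}^\star v_{j-1} = 0$ classically on $D$. Consequently it is a weak solution on $D_{j-1}$.

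\textbf{Step 2: membership in $H^1(D_{j-1})$.} Lemma~\ref{lemma_caccio_zeroth} requires $v_{j-1} \in H^1(D_{j-1})$. Smoothness alone gives only $H^1_{\mathrm{loc}}$, so this is the main technical obstacle. I would handle it in one of two ways. The first is to apply the lemma on a slightly shrunken domain $D_{j-1}' \Subset D_{j-1}$ (for example, replacing $r/p$ by $(1-\tau)r/p$ in the definition) on which $v_{j-1}$ is bounded together with its gradient, and then let $\tau \to 0$. The second is to observe inductively that $v_{j-1} \in L^2(D_{j-1})$ and $\nabla v_{j-1} \in L^2(D_j)$ by the previous step, and to apply the lemma with the pair $(D_j, D_{j+1})$, which only needs finiteness of $\|v_j\|_{L^2(D_j)}$ and a local $H^1$ test-function argument. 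The cutoff proof of Lemma~\ref{lemma_caccio_zeroth} in fact only uses $\chi^2 u \in H^1_0(D)$, which holds for $u \in H^1_{\mathrm{loc}}$ with $\chi$ compactly supported, so I expect the lemma to apply directly after noting this.

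\textbf{Step 3: iterate.} Applying Lemma~\ref{lemma_caccio_zeroth} with the pair $(D_j, D_{j-1})$ gives
\[
    \|v_j\|_{L^2(D_j)} \leq \|\nabla v_{j-1}\|_{L^2(D_j)} \leq \Big( \tfrac{4p}{r}\sqrt{\tfrac{\lambda_{\max}}{\lambda_{\min}}} + \tfrac{\|\bld{b}\|}{\lambda_{\min}} \Big) \|v_{j-1}\|_{L^2(D_{j-1})}.
\]
To put this in the form $p/r$ times a constant, use $r \leq \diam(\mathbb{T}^d)$ and $p \geq 1$, so that $\tfrac{\|\bld{b}\|}{\lambda_{\min}} \leq \tfrac{p}{r} \cdot \tfrac{\|\bld{b}\| \diam(\mathbb{T}^d)}{\lambda_{\min}}$. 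The factor per step is then at most $\tfrac{p}{r} C_{\mcal{L}}/e$. Iterating over the $p$ steps yields
\[
    \|\partial^\alpha u\|_{L^2(K)} \leq \Big(\tfrac{p\, C_{\mcal{L}}}{e\, r}\Big)^{p} \|u\|_{L^2(D)},
\]
and the bound $p^p \leq e^p\, p!$ gives the claimed estimate $\tfrac{C_{\mcal{L}}^p\, p!}{r^p}\|u\|_{L^2(D)}$.
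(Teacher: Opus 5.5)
Your proposal is correct and follows the paper's proof essentially step for step: nested domains with separation $r/p$, commuting the constant-coefficient operator $\mcal{L}^\star$ with $\partial_{i_j}$, iterating Lemma~\ref{lemma_caccio_zeroth}, absorbing $\|\bld{b}\|/\lambda_{\min}$ via $r \leq \diam(\mathbb{T}^d)$, and finishing with $p^p \leq e^p p!$. The only cosmetic difference is that you build the $D_j$ by shrinking $D$ away from $\partial D$, whereas the paper grows neighborhoods $\{z : \dist(z,K) < (1-j/p)r\}$ of $K$; and the $H^1$ ``obstacle'' in your Step 2 disappears with your construction, since $v_0 = u \in H^1(D)$ by hypothesis and, for $j \geq 2$, $\overline{D_{j-1}}$ is a compact subset of $D$ on which $v_{j-1}$ and its gradient are bounded, which is exactly how the paper justifies $\partial^\beta u \in H^1(D_j)$.
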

\begin{proof}
    Let $u \in C^\infty(D)$ be a weak solution of $\mcal{L}^\star u =0$.  
    Let $r:=\text{dist}(K, \partial D)$.
    Define $D_j = \{z \in \mathbb{T}^d: \text{dist}(z,K) < (1 - \frac{j}{p})r\}$ for $j = 0, \ldots, p-1$, and $D_p = K$. 
    Thus, 
    \[
        K = D_p \subset D_{p-1} \subset \ldots \subset D_1 \subset D_0 \subset D.
    \]  
By construction, $\text{dist}(D_j, \partial D_{j-1}) \geq \frac{r}{p}$ for $j = 1, \ldots, p$.
Since $u \in C^\infty(D) \subset H^1_\text{loc}(D)$, $\overline{D_j} \Subset D$ for $j = 1, \ldots, p-1$ implies that $\partial^\beta u \in H^1(D_j)$. 
Moreover, since $\mcal{L}^\star$ has constant coefficients, 
\[
    \mcal{L}^\star(\partial^\beta u) = \partial^\beta(\mcal{L}^\star u) = 0
\]
in the distributional sense for any $\beta \in \mathbb{N}_0^d$. 
Hence $\partial^\beta u \in H^1(D_j)$ is a weak solution of $\mcal{L}^\star \left( \partial^\beta u \right) = 0$ for any $\beta \in \mathbb{N}_0^d$ and for $j = 1, \ldots,p-1$.
    
Let $\alpha \in \mathbb{N}_0^d$ such that $|\alpha| = p$. 
Then, $\alpha = e_{i_1} + \ldots + e_{i_p}$ for some $i_1, \ldots, i_p \in \{1, \ldots, d\}$. 
We  iterate through the derivatives in a nested fashion in the following manner. 
    Define $v_0 := u$ and $v_j:= \partial_{i_j} v_{j-1}$ for $j = 1, \ldots, p$. 
    Then, $v_p = \partial^\alpha u$. 
    For $j \in \{1, \ldots, p\}$, $v_{j-1}$ is $\mcal{L^\star}$-harmonic, and Lemma~\ref{lemma_caccio_zeroth} on the domain pair $D_j \subset D_{j-1}$ gives
    \begin{align}
        \|v_j\|_{L^2(D_j)} & = \| \partial_{i_j} v_{j-1} \|_{L^2(D_j)} \leq \|\nabla v_{j-1}\|_{L^2(D_j)} \nonumber \\
        & \leq \left( \frac{4}{\text{dist}(D_j, \partial D_{j-1})} \sqrt{\frac{\lambda_\text{max}}{\lambda_\text{min}}}  + \frac{\|\bld{b}\|}{\lambda_\text{min}}\right) \|v_{j-1}\|_{L^2(D_{j-1})} \nonumber \\
        & \leq \left( \frac{4p}{r} \sqrt{\frac{\lambda_\text{max}}{\lambda_\text{min}}}  + \frac{\|\bld{b}\|}{\lambda_\text{min}}\right) \|v_{j-1}\|_{L^2(D_{j-1})}. \nonumber
    \end{align}
    By iteration, 
    \begin{align}
        \|\partial^\alpha  u \|_{L^2(K)} & = \| v_p\|_{L^2(D_p)} \nonumber \\
        & \leq 
         \left( \frac{4p}{r}\sqrt{\frac{\lambda_\text{max}}{\lambda_\text{min}}} + \frac{\|\bld{b}\|}{\lambda_\text{min}} \right)^p \|v_0 \|_{L^2(D_0)}  \nonumber \\
         & \leq \left( \frac{4p}{r}\sqrt{\frac{\lambda_\text{max}}{\lambda_\text{min}}} + \frac{p\text{diam}(\mathbb{T}^d) \|\bld{b}\|}{\lambda_\text{min}r} \right)^p \|u \|_{L^2(D)} \nonumber
    \end{align}
    where in the last inequality we used that $p \geq 1$ and $r \leq \text{diam}(\mathbb{T}^d)$.
    Thus, 
    \[
        \|\partial^\alpha  u \|_{L^2(K)} 
        \leq \frac{\left(4 \sqrt{\frac{\lambda_\text{max}}{\lambda_\text{min}}} + \frac{\text{diam}(\mathbb{T}^d)\|\bld{b}\|}{\lambda_\text{min}} \right)^p p^p}{r^p } \|u \|_{L^2(D_0)}.
    \]
    In particular, using that $p^p \leq e^p p!$ for integer $p \geq 1$,
    \begin{align}
        \|\partial^\alpha  u \|_{L^2(K)} 
        & \leq \frac{\left(4e \sqrt{\frac{\lambda_\text{max}}{\lambda_\text{min}}} + \frac{e\text{diam}(\mathbb{T}^d)\|\bld{b}\|}{\lambda_\text{min}} \right)^p p!}{r^p } \|u \|_{L^2(D_0)} \nonumber \\
        & \leq \frac{\left(4e \sqrt{\frac{\lambda_\text{max}}{\lambda_\text{min}}} + \frac{e\text{diam}(\mathbb{T}^d)\|\bld{b}\|}{\lambda_\text{min}} \right)^p p!}{r^p } \|u \|_{L^2(D)} \nonumber 
    \end{align}
    where in the final inequality we used that $D_0 \subset D$. 

\end{proof}

We now apply Lemma~\ref{lemma:higherordercaccioppoli} to the Green's function of $\mcal{L}$. 
We do not address the existence of the Green's function or its off-diagonal regularity here.
Instead, we assume the standard properties needed for the argument. 

\begin{proposition}
    Let $d \leq 3$. 
    Assume that the solution operator $\mcal{L}^{-1}$ admits a Green function $G \in L^2(\mathbb{T}^d \times \mathbb{T}^d)$ such that, for almost every $x \in \mathbb{T}^d$,
    \[
        G(x, \cdot) \in C^\infty(\mathbb{T}^d \backslash \{x\}),  
    \]
    and
    \[
        \mcal{L}^\star_y G(x,\cdot) = \delta_x \quad \text{in} \ \mcal{D}^\prime(\mathbb{T}^d).
    \]
    Under the above assumptions, the Green function $G$ is $L^2$-asymptotically smooth with respect to the $y$-variable.
    \label{prop: torusgreenfcn}
\end{proposition}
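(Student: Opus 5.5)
The plan is to mirror the proof for the unit ball, replacing the harmonic Caccioppoli estimate with Lemma~\ref{lemma:higherordercaccioppoli}. The torus setting has no boundary, so no reflection or Kelvin extension is needed. Fix measurable $\mcal{X}$ and a bounded connected Lipschitz domain $\mcal{Y}$ in $\mathbb{T}^d$ with $r := \dist(\mcal{X},\mcal{Y}) > 0$. Fix $x \in \mcal{X}$ outside the null set where the assumed properties fail.

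\emph{Setup.} Take the open set
\[
D := \{z \in \mathbb{T}^d : \dist(z,\mcal{Y}) < r/2\}.
\]
Then $D \subset \widehat{\mcal{Y}}$, and since $\dist(x,\mcal{Y}) \geq r$ we have $x \notin \overline{D}$. If $D$ happens to be all of $\mathbb{T}^d$, the hypothesis $D \subsetneq \mathbb{T}^d$ of the lemma fails. In that case I would shrink the radius slightly, say to $r/2 - \epsilon$. This is harmless because $x \notin D$ already forces $D \neq \mathbb{T}^d$.

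\emph{Applying the Caccioppoli lemma.} Set $u := G(x,\cdot)|_D$. By assumption $u \in C^\infty(\mathbb{T}^d \setminus \{x\})$, so $u$ is smooth on a neighborhood of $\overline{D}$, hence $u \in C^\infty(D) \cap H^1(D)$. Restricting the distributional identity $\mcal{L}^\star_y G(x,\cdot) = \delta_x$ to test functions supported in $D$ gives $\mcal{L}^\star u = 0$ classically on $D$. Integrating by parts against $v \in C_c^\infty(D)$, and using density in $H^1_0(D)$, shows that $u$ is a weak solution in the sense of $a^\star$.

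Apply Lemma~\ref{lemma:higherordercaccioppoli} with $K := \mcal{Y}$. The distance from $\mcal{Y}$ to $\partial D$ is at least $r/2$. For $|\alpha| = p \geq 1$ this gives
\[
\|\partial^\alpha_y G(x,\cdot)\|_{L^2(\mcal{Y})} \leq \frac{(2C_\mcal{L})^p\, p!}{r^p}\, \|G(x,\cdot)\|_{L^2(D)} \leq \frac{(2C_\mcal{L})^p\, p!}{r^p}\, \|G(x,\cdot)\|_{L^2(\widehat{\mcal{Y}})}.
\]
The case $p = 0$ is immediate from $\mcal{Y} \subset \widehat{\mcal{Y}}$. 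Hence the definition holds with $C = 1$ and $\gamma = 2C_\mcal{L}$.

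\emph{Integrability.} It remains to check $G(x,\cdot) \in L^2(\widehat{\mcal{Y}})$. This follows from $G \in L^2(\mathbb{T}^d \times \mathbb{T}^d)$ via Fubini, since $G(x,\cdot) \in L^2(\mathbb{T}^d)$ for a.e.\ $x$. It also follows from smoothness on $\widehat{\mcal{Y}}$, which stays at distance at least $r/2$ from $x$.

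\emph{Main obstacle.} The step needing care is the technical verification of the Caccioppoli hypotheses on the torus. This means showing that $\mcal{L}^\star u = 0$ holds in the weak $H^1_0(D)$ sense on $D$, and that $D$ is a proper open subset with $\dist(\mcal{Y},\partial D) \geq r/2$ measured in the torus metric. On $\mathbb{T}^d$ the distance uses the quotient metric, and the nested sets $D_j$ in the lemma must be formed with that same metric.

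A secondary issue is the factor $\diam(\mathbb{T}^d)$ in $C_\mcal{L}$. It requires $r \leq \diam(\mathbb{T}^d)$, which holds automatically because $r$ is a distance between two subsets of $\mathbb{T}^d$.
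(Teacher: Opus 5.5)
Your proposal is correct and is essentially the paper's proof. The paper uses the same open set $D_{\mathcal{Y}}=\{z:\dist(z,\mathcal{Y})<\tfrac12\dist(\mathcal{X},\mathcal{Y})\}$, applies Lemma~\ref{lemma:higherordercaccioppoli} with $K=\mathcal{Y}$ and $\dist(\mathcal{Y},\partial D_{\mathcal{Y}})\geq\tfrac12\dist(\mathcal{X},\mathcal{Y})$, and concludes with $C=1$, $\gamma=2C_{\mathcal{L}}$; your extra checks (that $x\notin D$ already makes $D$ a proper subset so no shrinking is needed, the weak formulation on $D$, and $G(x,\cdot)\in L^2(\widehat{\mathcal{Y}})$) only make explicit details the paper leaves implicit.
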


\begin{proof}
    Let $\mcal{X} \subset \mathbb{T}^d$ be measurable and $\mcal{Y} \subset \mathbb{T}^d$ a bounded connected Lipschitz domain such that $\text{dist}(\mcal{X}, \mcal{Y}) > 0$. 
    Define $\widehat{\mcal{Y}} := \{z \in \mathbb{T}^d: \text{dist}(z,\mcal{Y}) \leq \frac{1}{2}\text{dist}(\mcal{X},\mcal{Y})\}$, and a subdomain $D_\mcal{Y} := \{z \in \mathbb{T}^d: \text{dist}(z, \mcal{Y}) < \frac{\text{dist}(\mcal{X},\mcal{Y})}{2}\}$. 

    Fix $x \in \mcal{X}$ for which the assumptions on $G(x,\cdot)$ hold. 
    Then, since  $\overline{D_\mcal{Y}} \subset \mathbb{T}^d \backslash \{x\}$, $G(x,\cdot) \in C^\infty(\overline{D_\mcal{Y}}) \cap H^1(D_\mcal{Y})$ and satisfies $\mcal{L}^\star G(x,\cdot) = 0$ on $D_\mcal{Y}$. 
    By construction, $\text{dist}(\mcal{Y}, \partial D_\mcal{Y}) \geq \frac{\text{dist}(\mcal{X},\mcal{Y})}{2}$. 
    Hence for $|\alpha| \geq 1$, Lemma~\ref{lemma:higherordercaccioppoli} gives  
    \[
        \|\partial^\alpha G(x,\cdot)\|_{L^2(\mcal{Y})} \leq \frac{\left( 2C_\mcal{L} \right)^{|\alpha|} |\alpha|!}{\text{dist}(\mcal{X}, \mcal{Y})^{|\alpha|}} \|G(x,\cdot)\|_{L^2(\widehat{\mcal{Y}})}.
    \]  
    For $|\alpha| = 0$,  $\|G(x,\cdot)\|_{L^2(\mcal{Y})} \leq \|G(x, \cdot)\|_{L^2(\widehat{\mcal{Y}})}$. 
    Thus, take $C = 1$ and $\gamma = 2C_\mcal{L}$.
    
\end{proof}

\subsection{Numerical Examples}
\label{sec:numericalexamples}

We illustrate the approximation of solution operators of a one-dimensional linear elliptic PDE problem using Algorithm~\ref{adjointfreealg}.
In particular, consider
\[
    \begin{cases}
        -(a(x) u'(x))' + b(x) u'(x) + c(x)u(x) = f(x), \quad x \in (0,1), \\
        u(0) = u(1) = 0
    \end{cases}
\]
on the unit interval $\Omega = [0,1]$. 
We first consider the constant-coefficient problem $-u'' + bu' = f$. 
For $b \neq 0$, an analytical expression for the exact Green's function is given by
\[
    G(x,y) = \frac{(1-e^{b(x-1)})(1-e^{-by})}{b(1-e^{-b})}H(x-y) + \frac{(-1 + e^{bx})(e^{-by}-e^{-b})}{b(1-e^{-b})}H(y-x)
\]
where $H$ denotes the Heaviside step function.  
This example provides a useful benchmark because the continuous Green's function is known explicitly, allowing the reference operator to be constructed without a finite-element PDE solve.   
We subsequently consider four families of variable-coefficient problems, summarized in 
Figure~\ref{fig:numerical_experiments} below. 

\begin{figure}[!t]
    \centering
    \begin{minipage}[c]{0.475\linewidth}
        \centering
        \includegraphics[width=\linewidth]{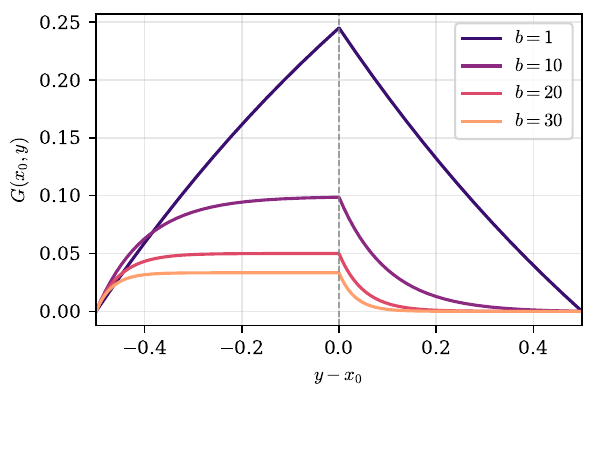}
    \end{minipage}
    \hfill
    \begin{minipage}[c]{0.475\linewidth}
        \centering
        \includegraphics[width=\linewidth]{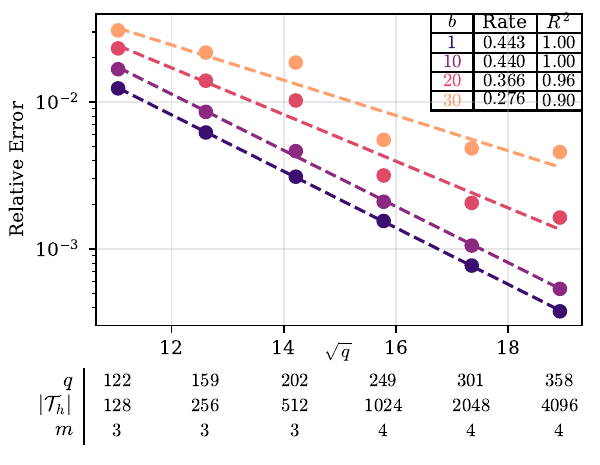}
    \end{minipage}

    \caption{
        Varying the advection strength $b$ in the one-dimensional
        advection-diffusion equation $-u''+bu'=f$.
        Left: Slices of the Green's function $G(x_0,y)$ for various
        advection strengths, centered at $x_0=\frac12$.
        Right: Approximation errors (dots) and root-exponential fits
        (dashed lines) as functions of the number of queries.
        Each data point corresponds to a single Gaussian realization.
    }
    \label{fig:constant-numerics}
\end{figure}
\begin{figure}[!t]
    \centering
    \includegraphics[
        width=\linewidth,
        height=0.62\textheight,
        keepaspectratio
    ]{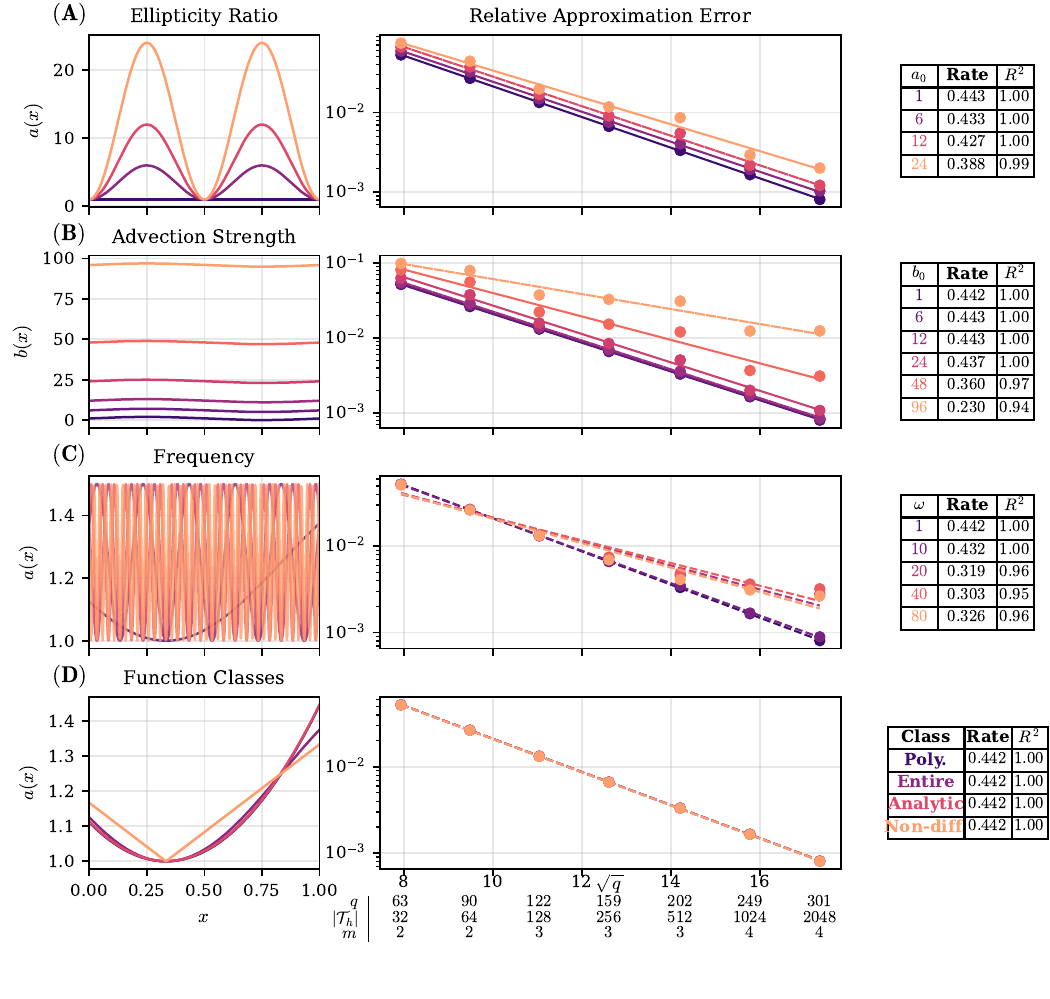}

    \caption{
        Variable coefficients experiments.
        Left column: coefficients. Right column: total approximation error
        (dots) and root-exponential fits (dashed lines) as functions of the
        number of queries. 
        Each data point corresponds to a single Gaussian realization.
        (\textbf{A}) Varying the ellipticity ratio with $a(x)=1+(a_0-1)\sin^2(2\pi x)$, $b(x) = 2 + 5x$ and $c(x) = 1 + x^2$. 
        (\textbf{B}) Varying the advection strength with $a(x) = 2 + 5x$, $b(x)=b_0+\sin(2\pi x)$ and $c(x) = 1 + x^2$.
        (\textbf{C}) Varying the oscillation frequency with $a(x)=\frac54-\frac14\cos\!\left(\omega\pi(x-\frac13)\right)$, $b(x) = 2 + 5x$ and $c(x) = 1 + x^2$.
        (\textbf{D}) Comparing polynomial, entire, analytic, and nondifferentiable diffusion coefficients $a(x)$, with $b(x) = 2 + 5x$ and $c(x) = 1 + x^2$.  
    }
    \label{fig:numerical_experiments}
\end{figure}
Since the infinite-dimensional operator $\mcal{A}$ cannot be represented exactly, we construct a reference matrix $\bld{A}_\text{ref}$ on a fine reference mesh $\mcal{T}_{h_\text{ref}}$. 
Both its input and output are represented in the $L^2$-normalized piecewise constant basis on this mesh. 
For the constant-coefficient problem, its entries are obtained by integrating the Green's function over pairs of reference cells. 
For the variable-coefficient problems, we solve the PDE using continuous piecewise linear finite elements on the reference mesh and project the solutions onto the piecewise constant reference space. 
We treat this reference discretization as ground truth. 
The precise matrix constructions are given in Appendix~\ref{appendix:numerics}. 
Algorithm~\ref{adjointfreealg} operates on coarser meshes $\mcal{T}_h$, which are chosen so that their piecewise constant spaces are nested in the reference space. 
Each query uses a piecewise constant forcing whose coefficients in the normalized coarse basis are independent standard normal random variables. 
The response is computed using the reference operator and projected onto the coarse piecewise constant space.

Let $\tilde{\bld{A}}_h$ be the matrix returned by the algorithm, and let $   \tilde{\bld{A}}_\text{ref}$ denote its representation in the normalized reference basis. 
We report the relative error
\[
    \frac{\| \bld{A}_\text{ref} - \tilde{\bld{A}}_\text{ref}\|_F}{\|\bld{A}_\text{ref}\|_F}.
\]
Since the reference basis is $L^2$-orthonormal, this is the relative Hilbert-Schmidt error for the corresponding operators on the reference space. 
The resolution of the reference mesh is chosen sufficiently fine to ensure further refinement do not considerably change the reported errors. 
Appendix~\ref{appendix:numerics} describes the relation between this reported error and the continuous Hilbert-Schmidt error. 

To motivate the choice of mesh size $h$, polynomial degree $m$ and number of queries $q$, we use the scalings predicted by Corollary~\ref{corquerybudget}.
In one dimension for fixed $\delta$ and $\rho$, the corollary gives a root-exponential error bound $\| \mcal{A} - \tilde{\mcal{A}} \|_{HS} = \mcal{O}(e^{-\kappa \sqrt{q}})$, together with parameter scalings 
\[
    h \approx e^{-c_h \sqrt{q}}, \quad m \approx c_m \sqrt{q},
\]
for positive constants $c_h$ and $c_m$. 
We prescribe a nested sequence of coarse meshes and choose $q$ and $m$ according to these scalings, up to integer rounding.
We additionally require $q \geq \text{max}_i k_i + 2$, where $k_i$ is the dimension of the $i$-th row subspace, as required by Algorithm~\ref{alg:amselgen}.
We use $c_h = 0.44, c_m = 0.3$ and $\eta = 0.3$ throughout the experiments. 

Figure~\ref{fig:constant-numerics} first examines the constant-coefficient problem $-u'' + bu' = f$ as the advection strength $b$ increases. As $b$ grows, the Green's function becomes increasingly asymmetric about the source point and shifts its weight in the direction of transport, as illustrated in the left panel.
The right panel shows the measured approximation errors with fits of the form $C_0e^{-\kappa \sqrt{q}}$. 
Over the range of queries tested, the observed decay is well-described by this root-exponential scaling, in agreement with the upper bounds in Theorem~\ref{bigtheorem}. 
Interestingly, at larger advection strengths, the errors develop visible plateaus.
These plateaus align with ranges of $q$ for which the rounded polynomial degree $m$ remains unchanged.
This suggests that local polynomial approximation and matrix approximation errors become more pronounced for stronger advection, thus exposing the discrete nature of $m$. 

Figure~\ref{fig:numerical_experiments} tests four families of variable-coefficient problems.
Problem \textbf{A} investigates how the performance of the algorithm depends on the ellipticity ratio $\frac{\lambda_\text{max}}{\lambda_\text{min}}$ by tuning a parameter $a_0$ in the diffusion coefficient. 
Increasing the ratio leads to slower observed convergence and makes the plateau structure increasingly visible.   
Problem \textbf{B} varies a parameter in the first order coefficient to increase the advection strength, which makes the problem increasingly non-self-adjoint. 
For weaker advection, the error decreases smoothly with mesh refinement, while for stronger advection the plateau structure becomes more apparent. 
Problem \textbf{C} defines the diffusion coefficient as a trigonometric function to test the effect of increasingly rapid coefficient oscillations through the frequency $\omega$ while preserving the same ellipticity ratio and qualitative regularity class. 
Lastly, problem $\textbf{D}$ compares diffusion coefficients from different regularity classes, chosen to have similar shapes and magnitudes. 
We consider the polynomial $a(x) = 1+\left(x-\frac13\right)^2$
, the entire function $a(x) = \frac54-\frac14 \cos\left(\pi\left(x-\frac13\right)\right)$
, the analytic function 
\[
    a(x)=1+\left(x-\frac13\right)^2+\frac{0.2r^2\left(x-\frac13\right)^2}{\left(x-\frac13\right)^2+r^2}, 
    \qquad r = 0.1
\]
with radius of convergence $r$ about $x = \frac{1}{3}$ due to complex poles $\frac{1}{3} \pm ir$, and the non-differentiable function 
$a(x)=1+\frac12\left|x-\frac13\right|$, respectively.
Over the range of queries tested, the approximation error shows surprisingly little sensitivity to the regularity class.
In particular, the non-differentiable coefficient exhibits convergence comparable to the polynomial, entire and analytic examples.

\section{Proof of Main Theorem \ref{bigtheorem}}
Theorem \ref{bigtheorem} reduces the linear operator approximation problem into a matrix approximation problem by discretizing the input and output function spaces with a piecewise constant finite element basis, making use of its local nature.
Let $J_h$ be the prolongation operator on this basis defined above.
Algorithm~\ref{alg:amselgen} approximates the projected operator
\[P_h \mcal{A} P_h  = J_h \bld{A}_h J_h^* \]
by constructing a structured approximation $\bld{\tilde{A}}_h$ of the dense matrix $\bld{A}_h := J_h^* \mcal{A} J_h \in \R^{|\mcal{T}_h| \times |\mcal{T}_h|}$.
Note that $\bld{A}_h$ is never constructed explicitly, but can be queried via the action of the original operator,
\[
    \bld{v} \mapsto \bld{A}_h \bld{v} = J_h^* \mcal{A} (J_h \bld{v}).
\]
The structured approximation then yields the finite-rank approximation $\tilde{\mcal{A}} = J_h \bld{\tilde{A}}_h  J_h^*$ to $\mcal{A}$ whose error we seek to bound in the Hilbert-Schmidt norm. 
By triangle inequality, this error decomposes into the discretization error and the algorithmic error
\begin{equation}
    \| \mcal{A} - \tilde{\mcal{A}}\|_{HS:L^2 \rightarrow L^2} 
        \leq \| \mcal{A} - P_h \mcal{A} P_h\|_\mathrm{HS:L^2 \rightarrow L^2} + \| P_h \mcal{A} P_h - \tilde{\mcal{A}}\|_\mathrm{HS:L^2 \rightarrow L^2}.
        \label{eqn:discerrdecomp}
\end{equation}
Both terms will be bounded by $\frac{\varepsilon}{2} \left(\| \mcal{A}\|_{HS:L^2 \rightarrow H^\rho} + \| \mcal{A}^*\|_{HS:L^2 \rightarrow H^\rho} \right)$. 
We bound the discretization error in Lemma~\ref{FEdiscretizationlemma} in Section~\ref{section:fediscretization} by leveraging standard finite element approximation estimates. 
Proving that we can bound the algorithmic error with a query-efficient approximation is more involved and requires several lemmas and propositions established in the subsequent sections, collectively leading to the proof of Theorem~\ref{bigtheorem} in Section \ref{sectheoremproof}.

The algorithmic error is directly controlled by the matrix approximation error, since
\begin{align}
    \| P_h \mcal{A} P_h - \tilde{\mcal{A}}\|_{HS:L^2 \rightarrow L^2}
        & = \|  J_h  (\bld{A}_h - \bld{\tilde{A}}_h) J_h^*\|_\mathrm{HS} \nonumber \\
        & \leq  \| J_h\|_\text{op} \| \bld{A}_h - \bld{\tilde{A}}_h\|_F \| J_h^*\|_\text{op} = \| \bld{A}_h - \bld{\tilde{A}}_h\|_F \nonumber
\end{align}
where the final equality follows from $\| J_h\|_\text{op}^2 = \| J_h^*\|_\text{op}^2 = \| J_h^* J_h\|_\text{op} = 1$. 
To obtain a query-efficient approximation using Algorithm~\ref{alg:amselgen}, we construct subspaces in which each row of the discretized operator is exponentially well-approximated. The key argument is that asympotically smooth kernels have separable polynomial approximations with an exponentially decaying remainder on $\eta-$admissible domains, defined below, when $\eta < \frac{1}{\gamma d}$.  
\begin{definition}[$\eta$-admissible domains]\label{def:admissible}
    Given $\eta > 0$,
    a pair $(\mcal{X}, \mcal{Y})$ of disjoint subsets $\mcal{X}, \mcal{Y} \subset \R^d$ is called $\eta$-admissible if they satisfy
    \[
        \eta \dist(\mcal{X}, \mcal{Y}) \geq \diam(\mcal{Y}).
    \]
    We will sometimes simply call a pair admissible, dropping the ``$\eta$-'', when it is obvious from context.
\end{definition}
\noindent
This separable polynomial approximation on $\eta$-admissible domains established in Section~\ref{expconsepexpsection} allows us to form low-dimensional row-wise bases based on a Whitney-like decomposition of the domain into $\eta$-admissible and non-admissible domains. 
We describe this domain decomposition, which we refer to as a hierarchical partition, in Section~\ref{hierSAdecompsection}. 
Although the number of degrees of freedom in the finite element discretization grows polynomially with the inverse error tolerance, 
these two results are combined in Section \ref{rowspaceconstructionsection} to construct subspaces $\mcal{V}_i \subset \R^{|\mcal{T}_h|}$ whose projection errors can be made arbitrarily small while their dimensions grow only polylogarithmically with the inverse error tolerance.  
Finally, Section \ref{sectheoremproof} applies Algorithm~\ref{alg:amselgen}, together with the discretization and matrix approximation estimates, to prove Theorem~\ref{bigtheorem}.
Table~\ref{tab:notation} provides a summary of notation used in the proof.

\begin{table}
    \centering
    \caption{A summary of our notation}
    \begin{tabular}{c c}
    \hline
    \hline
       Notation  & Description  \\
       \hline
       \hline
       $\bld{A}_{i,:}$ & $i$-th row of matrix $\bld{A}$\\
        $\rho$ & Sobolev regularity exponent \\
        $|\mcal{T}_h|$ & total number of mesh cells in the finite element mesh \\
        $\sigma_\#$ & shape-regularity constant of finite element mesh \\
        $c_\text{quasi}$ & quasi-uniformity constant of finite element mesh\\
        $N_{cc}$ & uniform number of convex components of spatial domain $\Omega$ \\
        $\mcal{V}_i$ & subspace for row $i$\\
        $\bld{V}_i$ & orthonormal basis of $\mcal{V}_i$ \\
        $\bld{P}[\bld{A}]$ & structured matrix where each row $i$ is the orthogonal projection of $\bld{A}_{i,:}$ onto $\mcal{V}_i$ \\
        $\bld{P}_i$ & the orthogonal projector onto $\mcal{V}_i$ \\
        $k$ & uniform bound on subspace dimension  \\
        $d$ & spatial dimension \\
        $\Omega$ & bounded spatial domain in $\R^d$ \\
        $\mcal{C}_n$ & $n$-th convex component of spatial domain $\Omega$ \\
        $\eta$ & admissibility constant\\
        $\delta$ & failure probability\\
        $h$ & finite element mesh size\\

        $\theta_i$ & $i$-th normalized piecewise constant basis function\\
        $\mcal{X}_i$ & mesh cell support of $i$-th finite element\\
        $\mcal{N}_{i,p}$ & arbitrary non-admissible cube from hierarchical partition of $\mcal{X}_i \times \Omega$ \\
        $\mcal{Y}_{i,j}$ & arbitrary admissible cube from hierarchical partition of $\mcal{X}_i \times \Omega$ \\
        $q$ & number of queries \\
        $n_h$ & hierarchical level chosen w.r.t $h$ \\
        $m$ & polynomial degree \\
        $C, \gamma$ & asymptotic smoothness constants \\
        $c_\mcal{H}$ & mesh refinement constant (see Assumption~1\ref{assumefemmesh}) \\
        $K_{d,\eta}$ & bound on non-admissible domains at each hierarchical level (see Equation~\ref{numberofnonadmissible})\\
        $K_{W,\text{adm}}$ & overlap bound for enlarged admissible cubes with an admissible leaf (see Lemma~\ref{CZadm}) \\
        $K_{W,\text{non}}$ & overlap bound for enlarged admissible cubes with a non-admissible leaf (see Lemma~\ref{CZnon})  \\
        $K_\text{non, FE}$ & bound on finite elements intersecting the non-admissible region (see Lemma~\ref{lemmanumofFEtocoverNA}) \\
        $K_\text{mesh}$ & mesh cell-volume lower-bound constant (see proof of Lemma~\ref{numberofmeshcells}) \\
    \hline
    \end{tabular}
    \label{tab:notation}
\end{table}

\subsection{Finite Element Approximation}
\label{section:fediscretization}

In this section, we bound the finite element discretization error term appearing in Equation~\ref{eqn:discerrdecomp} in Lemma~\ref{FEdiscretizationlemma}. 
To do this, we derive a local approximation property to establish a global $L^2$-approximation estimate in Lemma~\ref{lemmaglobalapproxfe} for the orthogonal projection $P_h$ onto the piecewise-constant finite element space $\mcal{V}_h^0$, and then lift this estimate to the Hilbert-Schmidt norm. 
\begin{lemma}
    There is $c$ such that 
    \[
        \| v - P_hv \|_{L^2(\Omega)} \leq c h^\rho \|v\|_{H^\rho(\Omega)}
    \]
    for all $\rho \in (0,1)$, all $v \in H^\rho(\Omega)$ and all $h \in \mcal{H}$. 
    \label{lemmaglobalapproxfe}
\end{lemma}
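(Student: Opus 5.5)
The argument proceeds cell by cell and then sums. Since $P_h$ is the $L^2$-orthogonal projection onto piecewise constants, on each cell $\mcal{X}\in\mcal{T}_h$ we have $(P_hv)|_{\mcal{X}} = \bar v_{\mcal{X}} := \frac{1}{\mathrm{vol}(\mcal{X})}\int_{\mcal{X}} v$. Consequently
\[
    \|v-P_hv\|_{L^2(\Omega)}^2 = \sum_{\mcal{X}\in\mcal{T}_h} \|v-\bar v_{\mcal{X}}\|_{L^2(\mcal{X})}^2 .
\]
It therefore suffices to prove a local fractional Poincaré inequality
\[
    \|v-\bar v_{\mcal{X}}\|_{L^2(\mcal{X})}^2 \le c_0\, h_{\mcal{X}}^{2\rho}\, |v|_{H^\rho(\mcal{X})}^2,
\]
where $|v|_{H^\rho(\mcal{X})}^2 = \iint_{\mcal{X}\times\mcal{X}} \frac{|v(x)-v(y)|^2}{|x-y|^{d+2\rho}}\,dx\,dy$ is the Sobolev–Slobodeckij seminorm and $c_0$ does not depend on $\rho\in(0,1)$, $\mcal{X}$, or $h$.

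The local estimate follows from a direct computation that needs no reference-element mapping. Jensen's inequality gives
\[
    \|v-\bar v_{\mcal{X}}\|_{L^2(\mcal{X})}^2 \le \frac{1}{\mathrm{vol}(\mcal{X})}\iint_{\mcal{X}\times\mcal{X}} |v(x)-v(y)|^2\,dx\,dy .
\]
Inside the integrand, multiply and divide by $|x-y|^{d+2\rho}$ and use $|x-y|\le h_{\mcal{X}}$. This yields the factor $h_{\mcal{X}}^{d+2\rho}/\mathrm{vol}(\mcal{X})$ in front of $|v|^2_{H^\rho(\mcal{X})}$. Shape regularity gives $\mathrm{vol}(\mcal{X}) \ge \omega_d(\rho_{\mcal{X}}/2)^d \ge \omega_d (2\sigma_\#)^{-d} h_{\mcal{X}}^d$, so the factor is at most $c_0 h_{\mcal{X}}^{2\rho}$ with $c_0 = (2\sigma_\#)^d/\omega_d$. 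Because $h_{\mcal{X}}\le h\le 1$, we have $h_{\mcal{X}}^{2\rho}\le h^{2\rho}$. None of these constants depends on $\rho$, which is exactly the uniformity the lemma requires.

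Summing over cells, the seminorms $|v|_{H^\rho(\mcal{X})}^2$ integrate over the disjoint sets $\mcal{X}\times\mcal{X}\subset\Omega\times\Omega$. Hence their sum is bounded by $|v|^2_{H^\rho(\Omega)}\le\|v\|^2_{H^\rho(\Omega)}$, giving $c=\sqrt{c_0}$.

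The main obstacle is the choice of norm on $H^\rho(\Omega)$. If the paper's $H^\rho$ norm is defined by interpolation rather than by the Slobodeckij seminorm, the equivalence constants between the two may degenerate as $\rho\to0$ or $\rho\to1$. That would break the claimed uniformity in $\rho$. I would fix the Slobodeckij definition, with $\|v\|^2_{H^\rho} = \|v\|^2_{L^2}+|v|^2_{H^\rho}$, which is standard on Lipschitz domains and is consistent with the use of Maurin's theorem cited from \cite{kovacs2023hilbert}. Under that convention the argument above is fully uniform in $\rho$. Two further points need only a remark: exact coverage by cells with disjoint interiors (Assumption~1\ref{assumefemmesh}) ensures the cellwise decomposition of the norm is valid, and a possibly curved cell only needs an inscribed ball of diameter $\rho_{\mcal{X}}$, which the assumption supplies.
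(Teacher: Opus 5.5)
Your proposal is correct and follows essentially the same route as the paper: identify $P_hv$ cellwise with the mean, bound $\|v-\bar v_{\mcal{X}}\|^2_{L^2(\mcal{X})}$ by $\frac{1}{\mathrm{vol}(\mcal{X})}\iint_{\mcal{X}\times\mcal{X}}|v(x)-v(y)|^2\,dx\,dy$, insert $|x-y|^{d+2\rho}\le h_{\mcal{X}}^{d+2\rho}$, apply a cell-volume lower bound, and sum the Gagliardo seminorms over cells. The differences are only cosmetic. The paper uses the exact variance identity (with its factor $\frac{1}{2}$) where you use Jensen. It also bounds $\mathrm{vol}(\mcal{X})\ge K_\text{mesh}h^d$, which invokes quasi-uniformity, whereas your bound $\mathrm{vol}(\mcal{X})\ge\omega_d(2\sigma_\#)^{-d}h_{\mcal{X}}^d$ uses shape regularity alone and is slightly more economical.
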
 
\begin{proof}
    For each $\mcal{X}_i \in \mcal{T}_h$, the restriction of $P_h v$ to $\mcal{X}_i$ is the $L^2(\mcal{X}_i)$-orthogonal projection of $v|_{\mcal{X}_i}$ onto the space of degree-zero polynomials. Hence, $P_hv|_{\mcal{X}_i} = \frac{1}{\mathrm{vol}(\mcal{X}_i)} \int_{\mcal{X}_i} v(y) dy.$
    Expanding the square gives
    \[
        \|v - P_h v\|_{L^2(\mcal{X}_i)}^2 
            = \int_{\mcal{X}_i} v(x)^2 dx - \frac{1}{|\mcal{X}_i|} \left( \int_{\mcal{X}_i} v(y) dy \right)^2 
            = \frac{1}{2|\mcal{X}_i|} \int_{\mcal{X}_i} \int_{\mcal{X}_i} |v(x) - v(y)|^2 dx dy.
    \]
    For $\rho \in (0,1)$, the following holds
    \[
        \|v - P_h v\|_{L^2(\mcal{X}_i)}^2  \leq \frac{h_{\mcal{X}_i}^{d + 2 \rho}}{2 |\mcal{X}_i|} \int_{\mcal{X}_i} \int_{\mcal{X}_i} \frac{|v(x) - v(y)|^2}{|x-y|^{d + 2 \rho}} dx dy
    \]
    since $h_{\mcal{X}_i}$ is the diameter of the mesh cell.
    The proof of Lemma~\ref{numberofmeshcells} establishes that the volume of an arbitrary mesh cell of $\mathcal{T}_h$ is lower bounded by the quantity $|\mcal{X}_i| \geq K_\text{mesh} h^d$. Recalling that $h_{\mcal{X}_i} \leq h$, this lower bound together with the definition of the fractional Gagliardo semi-norm (see Section 2 of \cite{di2012hitchhikers}) yields
    \[
         \|v - P_h v\|_{L^2(\mcal{X}_i)}^2 \leq \frac{1}{2 K_\text{mesh}}h^{2\rho} |v|^2_{H^\rho(\mcal{X}_i)}.
    \]
    Set $c := \frac{1}{\sqrt{2 K_\text{mesh}}}$, and thus
    \[
        \| v - P_h v\|_{L^2(\mcal{X}_i)} \leq c h^\rho \|v\|_{H^\rho(\mcal{X}_i)}
    \]
    for all $\rho \in (0,1)$, all $v \in H^\rho(\mcal{X}_i)$, each $i = 1, \dots, |\mcal{T}_h|$ and all $h \in \mcal{H}$. 
    Since the mesh exactly partitions $\Omega$, squaring and summing over $i = 1, \dots, |\mcal{T}_h|$ gives 
    \[
        \| v - P_hv \|^2_{L^2(\Omega)} 
            = \sum_{i=1}^{|\mcal{T}_h|}  \| v - P_h v\|_{L^2(\mcal{X}_i)}^2 
            \leq c^2 h^{2\rho} \sum_{i=1}^{|\mcal{T}_h|} \|v\|_{H^\rho(\mcal{X}_i)}^2
            \leq c^2 h^{2\rho} \|v\|_{H^\rho(\Omega)}^2.
    \]
    
\end{proof}

We now apply this projection estimate in Lemma~\ref{lemmaglobalapproxfe} to the range of $\mcal{A}$ and $\mcal{A}^\star$ to obtain the required Hilbert-Schmidt bound. 

\begin{lemma}[Discretization Error]
    \label{FEdiscretizationlemma}
    Let Assumptions~1\ref{assumedomaincondition}, 1 \ref{assumefemmesh} and 2\ref{assumeglobalregularity} hold. 
    Let $\rho \in (0,1)$ with $\rho < s - \frac{d}{2}$. 
    Then, there exists a constant $c$ such that 
    \[
        \| \mcal{A} - P_h \mcal{A} P_h\|_{HS:L^2 \rightarrow L^2} 
            \leq c h^\rho \left( \| \mcal{A}\|_{HS:L^2 \rightarrow H^\rho} + \| \mcal{A}^*\|_{HS:L^2 \rightarrow H^\rho} \right)
    \]
     for all $h \in \mcal{H}$.
\end{lemma}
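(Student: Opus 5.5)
We split the error with the identity
\[
\mcal{A} - P_h \mcal{A} P_h = (I - P_h)\mcal{A} + P_h \mcal{A} (I - P_h),
\]
and bound each term in the $L^2 \to L^2$ Hilbert-Schmidt norm. Since $\|P_h\|_{\text{op}} \leq 1$ and the Hilbert-Schmidt norm is invariant under adjoints, the triangle inequality gives
\[
\| \mcal{A} - P_h \mcal{A} P_h\|_{HS} \leq \|(I-P_h)\mcal{A}\|_{HS} + \|\mcal{A}(I-P_h)\|_{HS} = \|(I-P_h)\mcal{A}\|_{HS} + \|(I-P_h)\mcal{A}^*\|_{HS},
\]
where we used that $P_h$ is self-adjoint, so $(\mcal{A}(I-P_h))^* = (I-P_h)\mcal{A}^*$. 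Both terms now share the form $(I-P_h)\mcal{B}$ with $\mcal{B} \in \{\mcal{A},\mcal{A}^*\}$.

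For such a term, fix an orthonormal basis $\{e_j\}$ of $L^2(\Omega)$. By Assumption~2\ref{assumeglobalregularity}, together with the fractional Maurin argument already cited (Lemma~2.3 of \cite{kovacs2023hilbert}), $\mcal{B}$ is Hilbert-Schmidt from $L^2(\Omega)$ into $H^\rho(\Omega)$ for $\rho < s - \frac{d}{2}$. In particular $\mcal{B}e_j \in H^\rho(\Omega)$ and $\sum_j \|\mcal{B}e_j\|_{H^\rho}^2 = \|\mcal{B}\|_{HS:L^2\to H^\rho}^2 < \infty$. Applying Lemma~\ref{lemmaglobalapproxfe} to each $v = \mcal{B}e_j$ and summing gives
\[
\|(I-P_h)\mcal{B}\|_{HS:L^2\to L^2}^2 = \sum_j \|(I-P_h)\mcal{B}e_j\|_{L^2}^2 \leq c^2 h^{2\rho} \sum_j \|\mcal{B}e_j\|_{H^\rho}^2 = c^2 h^{2\rho}\|\mcal{B}\|_{HS:L^2\to H^\rho}^2.
\]
Taking square roots and adding the bounds for $\mcal{B}=\mcal{A}$ and $\mcal{B}=\mcal{A}^*$ yields the claim, with the same constant $c$ as in Lemma~\ref{lemmaglobalapproxfe}. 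That constant is independent of $h$ and $\rho$.

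The only delicate point is justifying that $\|\mcal{B}\|_{HS:L^2\to H^\rho}$ is finite. We would state this explicitly as a consequence of the cited Maurin-type lemma on the Lipschitz domain $\Omega$ (Assumption~1\ref{assumedomaincondition}). We also need the $H^\rho$ norm used there to be the Sobolev–Slobodeckij norm appearing in Lemma~\ref{lemmaglobalapproxfe}, possibly up to an equivalence constant absorbed into $c$. Beyond this, the argument is a direct summation.
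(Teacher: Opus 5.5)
Your proposal is correct and follows the paper's proof essentially step for step: the same splitting $\mcal{A} - P_h\mcal{A}P_h = (I-P_h)\mcal{A} + P_h\mcal{A}(I-P_h)$, reduced via $\|P_h\|_{\text{op}} = 1$ and adjoint invariance of the Hilbert-Schmidt norm to $\|(I-P_h)\mcal{A}\|_{HS} + \|(I-P_h)\mcal{A}^*\|_{HS}$. This is followed, as in the paper, by applying Lemma~\ref{lemmaglobalapproxfe} to each $\mcal{B}e_j$ and summing over an orthonormal basis; you are slightly more explicit than the paper about the self-adjointness of $P_h$ and about the $H^\rho$-norm compatibility caveat, which the paper leaves implicit.
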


\begin{proof}
Let $h \in \mcal{H}$. 
Using that $\rho \in (0,1)$, there is a constant $c$ via Lemma~\ref{lemmaglobalapproxfe} such that the $L^2(\Omega)$-orthogonal projection satisfies 
    \begin{equation*}
        \| (I - P_h)w\|_{L^2(\Omega)} 
           \leq ch^{\rho} \| w\|_{H^\rho(\Omega)} \nonumber
    \end{equation*}
    for all $w \in H^\rho(\Omega)$. 
    Letting $\{ e_j \}_{j=1}^{\infty}$ be an orthonormal basis for $L^2(\Omega)$, we then have
    \begin{align}
        \| (I-P_h) \mcal{A}\|_{HS: L^2 \rightarrow L^2}^2 
            & = \sum_{j=1}^\infty \| (I - P_h) \mcal{A} e_j\|^2_{L^2(\Omega)}
            \nonumber \\
            & \leq \left( c h^\rho \right)^2 \sum_{j=1}^\infty \| i_{H^s \rightarrow H^\rho} \circ \mcal{A} e_j\|^2_{H^\rho(\Omega)} 
            = \left( c h^\rho\right)^2  \| \mcal{A}\|^2_{HS:L^2 \rightarrow H^\rho}  \nonumber
    \end{align}
    using the fact that the Sobolev embedding $i:H^s(\Omega) \hookrightarrow H^\rho(\Omega)$ is Hilbert-Schmidt (see Lemma 2.3 in \cite{kovacs2023hilbert}) and $\rho < s - \frac{d}{2}$.
    The adjoint operator also has a similar bound, 
    \[
        \| (I-P_h) \mcal{A}^*\|^2_{HS:L^2 \rightarrow L^2}  
        \leq  (ch^\rho)^2 \| \mcal{A}^*\|^2_{HS:L^2 \rightarrow H^\rho}.
    \]
    By the triangle inequality and the fact that $\| P_h\|_\text{op} = 1$, we obtain
    \begin{align} 
        \| \mcal{A} - P_h \mcal{A} P_h\|_{HS:L^2 \rightarrow L^2} 
            & \leq \| \mcal{A} - P_h \mcal{A}\|_{HS:L^2 \rightarrow L^2} + \| P_h \mcal{A} - P_h \mcal{A} P_h\|_{HS:L^2 \rightarrow L^2} \nonumber \\
            & \leq \| (I - P_h) \mcal{A}\|_{HS:L^2 \rightarrow L^2} + \| (I-P_h) \mcal{A}^*\|_{HS:L^2 \rightarrow L^2}
            \nonumber \\
            & \leq c h^\rho \left( \| \mcal{A}\|_{HS:L^2 \rightarrow H^\rho} + \| \mcal{A}^*\|_{HS:L^2 \rightarrow H^\rho} \right).\nonumber
    \end{align} 
\end{proof}
\subsection{Locally Exponentially Convergent Separable Polynomial Approximations}
\label{expconsepexpsection}
We first consider an $\eta$-admissible domain $\mcal{X} \times \mcal{Y}$ and show that asymptotically smooth kernels admit exponentially accurate polynomial approximations in the $y$-variable on such domains.  
Proposition~\ref{expconvsepexpansion} establishes this local approximation property by constructing an explicit polynomial of total degree at most $m$. 
Corollary~\ref{sepexpansionwithL2orthogonal} then uses the best-approximation-property of the $L^2(\mcal{Y})$-orthogonal projection to show that projection onto the space of all  polynomials with total degree $\leq m$ defined on $\mcal{Y}$, denoted by 
\[
    \mcal{P}^d_{m}(\mcal{Y}) 
        := \{p:\mcal{Y} \subset \R^d \rightarrow \R: p = \sum_{|\alpha| \leq m} a_\alpha y^\alpha, a_\alpha \in \R\},
\] achieves the same accuracy. 
This latter formulation provides the finite-dimensional local approximation spaces used in Section~\ref{rowspaceconstructionsection} to construct the row subspaces. 
We state Proposition~\ref{expconvsepexpansion} first. 

\begin{proposition}
\label{expconvsepexpansion}
    Let $\Omega \subset \R^d$ satisfy Assumption~1\ref{assumedomaincondition} and a kernel function $\kappa(\cdot,\cdot): \Omega \times \Omega \rightarrow \R$ be $L^2$-asymptotically smooth in the $y$-variable with constants $C,\gamma$, as defined in Definition~\ref{def:l2asymptoticsmoothness}. 
    Let $\eta < \frac{1}{\gamma d}$ and $\mcal{X} \subset \Omega$.
    Let $\mcal{Y} \subset \Omega$ be a bounded, connected domain with local Lipschitz boundary such that $\mcal{X} \times \mcal{Y}$ is $\eta$-admissible (see Definition~\ref{def:admissible}).
    Define the enlargement of $\mcal{Y}$ with respect to $\mcal{X}$ by $\widehat{\mcal{Y}} = \{z \in \Omega: \dist(z,\mcal{Y}) \leq \frac{1}{2}\dist(\mcal{X},\mcal{Y})\}.$
    For any $m \in \mathbb{N}$, the following approximation $\tilde{T}_m[\kappa](x,\cdot)$ is well-defined
    \[
        \tilde{T}_m[\kappa](x,y)  
            = \frac{1}{|\mcal{Y}|} \int_\mcal{Y} \sum_{|\alpha| \leq m} \frac{(y-y_0)^\alpha}{\alpha !} \partial_y^\alpha \kappa(x,y_0) d y_0
    \] 
    as a polynomial of total degree $\leq m$ on $\mcal{Y}$ and satisfies
    \[
    \| \kappa(x,\cdot) - \tilde{T}_m[\kappa](x,\cdot)\|_{L^2(\mcal{Y})} 
        \leq C \frac{(\gamma \eta d)^m}{ 1 - \gamma \eta d} \| \kappa(x,\cdot)\|_{L^2(\widehat{\mcal{Y}})} \quad \text{for almost every} \ x \in \mcal{X}.
    \]  \end{proposition}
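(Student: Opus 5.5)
The plan is to view $\tilde{T}_m[\kappa](x,\cdot)$ as the $L^2$-analogue of an averaged Taylor polynomial. I would control the error by the tail of the \emph{full} Taylor series, not by an integral remainder along segments. Segments are problematic here because $\mcal{Y}$ is only connected and Lipschitz, not convex, so the segment $[y_0,y]$ may leave $\mcal{Y}$, where Definition~\ref{def:l2asymptoticsmoothness} gives no derivative control.

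Fix $x\in\mcal{X}$ in the full-measure set where Definition~\ref{def:l2asymptoticsmoothness} applies, and write $r=\dist(\mcal{X},\mcal{Y})$. By admissibility, $|y-y_0|\le\diam(\mcal{Y})\le\eta r$ for all $y,y_0\in\mcal{Y}$.

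\emph{Well-definedness.} Since every $\partial^\alpha_y\kappa(x,\cdot)\in L^2(\mcal{Y})\subset L^1(\mcal{Y})$ (as $\mcal{Y}$ is bounded), the integral defining $\tilde{T}_m[\kappa](x,y)$ exists. Expanding $(y-y_0)^\alpha$ binomially exhibits it as a polynomial in $y$ of total degree $\le m$ whose coefficients are integrals of $y_0^\beta\,\partial^\alpha_y\kappa(x,y_0)$.

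\emph{Identity step.} The key intermediate claim is
\[
\kappa(x,y)=\frac{1}{|\mcal{Y}|}\int_{\mcal{Y}}\sum_{\alpha}\frac{(y-y_0)^\alpha}{\alpha!}\,\partial^\alpha_y\kappa(x,y_0)\,dy_0
\]
in $L^2(\mcal{Y})$, so that $\kappa-\tilde{T}_m[\kappa]$ equals the averaged tail $\sum_{|\alpha|>m}$.

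\emph{Tail estimate.} This is the routine part. For each $\alpha$ with $|\alpha|=p$, Minkowski's integral inequality together with Cauchy--Schwarz in $y_0$ gives
\[
\Big\|\frac{1}{|\mcal{Y}|}\int_{\mcal{Y}}\frac{(\cdot-y_0)^\alpha}{\alpha!}\,\partial^\alpha\kappa(x,y_0)\,dy_0\Big\|_{L^2(\mcal{Y})}\le\frac{(\eta r)^p}{\alpha!}\,|\mcal{Y}|^{-1/2}\,\|\partial^\alpha\kappa(x,\cdot)\|_{L^1(\mcal{Y})}\le\frac{(\eta r)^p}{\alpha!}\,\|\partial^\alpha\kappa(x,\cdot)\|_{L^2(\mcal{Y})}.
\]
Asymptotic smoothness bounds the last factor by $C\,p!\,\gamma^p r^{-p}\|\kappa(x,\cdot)\|_{L^2(\widehat{\mcal{Y}})}$. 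Summing over $|\alpha|=p$ with the multinomial identity $\sum_{|\alpha|=p}p!/\alpha!=d^p$ gives $C(\gamma\eta d)^p\|\kappa(x,\cdot)\|_{L^2(\widehat{\mcal{Y}})}$. Because $\eta<1/(\gamma d)$, the sum over $p\ge m+1$ is geometric and yields $C(\gamma\eta d)^{m+1}/(1-\gamma\eta d)$, which is at most the stated bound.

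\emph{Main obstacle: justifying the identity step.} Here are three routes, in the order I would try them.
\begin{enumerate}[(i)]
\item The tail estimate already shows that the partial sums $\tilde{T}_m[\kappa](x,\cdot)$ form a Cauchy sequence in $L^2(\mcal{Y})$, so they converge to some limit $S$.
\item To identify $S$ with $\kappa(x,\cdot)$, differentiate under the integral. For $|\beta|\le m$, one checks $\partial^\beta_y\tilde{T}_m[\kappa]=\tilde{T}_{m-|\beta|}[\partial^\beta_y\kappa]$. The same tail bound, applied with $\partial^\beta\kappa$ and the factorial bounds, then shows that $\partial^\beta\tilde{T}_m[\kappa]$ converges in $L^2(\mcal{Y})$ for every $\beta$. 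Hence $S\in H^k(\mcal{Y})$ for all $k$, and $w:=\kappa(x,\cdot)-S$ is smooth on $\mcal{Y}$ by Sobolev embedding on the Lipschitz domain $\mcal{Y}$.
\item I would then show that $w$ is real-analytic with vanishing averaged moments, that is, $\int_{\mcal{Y}}\partial^\alpha w=0$ for all $\alpha$. The factorial $L^2$ bounds, converted to sup bounds via Sobolev embedding at the cost of a finite number of extra derivatives, give analyticity. The moment condition should follow by applying the averaging operator repeatedly. Connectedness of $\mcal{Y}$ and the identity theorem would then force $w\equiv0$.
\end{enumerate}
If step (iii) proves delicate, my fallback is to prove the identity first for $\mcal{Y}$ a ball, where segments do stay inside, and then cover the connected set $\mcal{Y}$ by a chain of balls and use analytic continuation. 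This is the one step I would expect to need genuine care.
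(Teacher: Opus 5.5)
Your well-definedness argument, steps (i)--(ii), and the tail estimate are all correct. The Minkowski/Cauchy--Schwarz bound combined with the multinomial identity even gives the slightly sharper factor $(\gamma\eta d)^{m+1}$. The paper instead works pointwise in $y$, swaps sum and integral by Fubini, and bounds $\sum_{|\alpha|>m}$ by $\sum_{|\alpha|\ge m}$.

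The genuine gap is the identity step, specifically the inference in (iii). Real-analyticity of $w$ on the connected set $\mcal{Y}^\circ$ together with $\int_{\mcal{Y}}\partial^\alpha w\,dy=0$ for every $\alpha$ does \emph{not} force $w\equiv 0$. The identity theorem needs all derivatives to vanish at a single point, or $w$ to vanish on an open set; vanishing of all derivative integrals is not enough. For example, take $w(y)=\sin(2\pi y_1)$ on $\mcal{Y}=(0,1)^d$:
\begin{itemize}
\item it is entire;
\item every $\partial^\alpha w$ has zero integral over the cube;
\item it even obeys $\|\partial^\alpha w\|_{L^2(\mcal{Y})}\le(2\pi)^{|\alpha|}\|w\|_{L^2(\mcal{Y})}$;
\end{itemize}
yet $w\neq 0$. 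So neither connectedness nor factorial bounds rescue the implication. The moment condition itself is also left unjustified by \emph{applying the averaging operator repeatedly}.

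The missing idea is the one the paper isolates in Lemma~\ref{analyticitylemma}: a \emph{uniform pointwise} factorial bound whose radius exceeds $\diam(\mcal{Y})$. Use $H^l(\mcal{Y})\hookrightarrow C(\overline{\mcal{Y}})$ with $l>d/2$, and absorb the extra derivatives via $(p+l)!\le C_{l,\lambda}\lambda^p p!$ for a fixed $1<\lambda<1/(\gamma\eta d)$. This gives $|\partial^\beta_y\kappa(x,y_0)|\le M|\beta|!\,(\lambda\gamma/r)^{|\beta|}$ for all $y_0\in\mcal{Y}$, where $r=\dist(\mcal{X},\mcal{Y})$. Since $\max_i|y_i-y_{0,i}|\le\diam(\mcal{Y})\le\eta r<r/(\lambda\gamma d)$, the Taylor series centered at \emph{any} $y_0\in\mcal{Y}^\circ$ converges absolutely on an open box containing $\overline{\mcal{Y}}$. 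It is therefore analytic there and agrees with $\kappa(x,\cdot)$ near $y_0$. By the identity theorem it agrees with $\kappa(x,\cdot)$ on all of the connected set $\mcal{Y}^\circ$. This handles non-convexity without segments or chains of balls.

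Your fallback as stated does not supply this. Continuing $\kappa(x,\cdot)$ along a chain of balls just reproduces $\kappa(x,\cdot)$. It says nothing about whether the single series centered at $y_0$ converges at a distant $y$ and sums to $\kappa(x,y)$, which is exactly the radius statement above. That radius statement is also where $\eta<1/(\gamma d)$ enters the identity, not only the tail.

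The same bound closes the argument in your own framework. It gives the uniform domination $\sum_\alpha\bigl|\frac{(y-y_0)^\alpha}{\alpha!}\partial^\alpha_y\kappa(x,y_0)\bigr|\le M\sum_{l\ge 0}(\lambda\gamma\eta d)^l$. Hence $\tilde{T}_N[\kappa](x,y)\to\kappa(x,y)$ for every $y\in\mcal{Y}^\circ$, by dominated convergence in $y_0$. Comparing with your $L^2$ limit $S$ along an a.e.\ convergent subsequence gives $S=\kappa(x,\cdot)$, and your tail estimate then finishes the proof.
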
 
The proof of Proposition~\ref{expconvsepexpansion} relies on two lemmas. The approximation constructed in Proposition~\ref{expconvsepexpansion} relies on averaging Taylor expansions of the kernel over $\mcal{Y}$. 
Lemma~\ref{analyticitylemma} establishes that asymptotic smoothness guarantees both the required analyticity and the absolute integrability needed to interchange the Taylor series with this averaging integral over its expansion center. 
\begin{lemma}
\label{analyticitylemma}
Let $\Omega \subset \R^d$ satisfy Assumption~1\ref{assumedomaincondition} and let the kernel function $\kappa(\cdot,\cdot): \Omega \times \Omega \rightarrow \R$ be $L^2$-asymptotically smooth in the $y$-variable with constants $C,\gamma$, as defined in Definition~\ref{def:l2asymptoticsmoothness}. 
Let $\eta < \frac{1}{\gamma d}$ and $\mcal{X} \subset \Omega$. 
Let $\mcal{Y} \subset \Omega$ be a bounded, connected domain with local Lipschitz boundary such that $\mcal{X} \times \mcal{Y}$ is $\eta$-admissible. 
Then, for almost every $x \in \mcal{X}$, the kernel $\kappa(x,\cdot) \in C^\omega(\mcal{Y}^\circ)$ is analytic in the interior $\mcal{Y}^\circ$.  Furthermore, the function $\kappa(x, \cdot)$ agrees to its Taylor series about any $y_0 \in \mcal{Y}^\circ$ on $\mcal{Y}^\circ$, i.e.
\[
    \kappa(x,y) 
    = \sum_{\alpha} \frac{(y-y_0)^\alpha}{\alpha!} \partial_y^\alpha \kappa(x, y_0) 
    \quad \forall  \ y, y_0 \in \mcal{Y}^\circ \quad \text{for almost every } x \in \mcal{X}.
\]

Moreover, for almost every $x \in \mcal{X}$ and every $y \in \mcal{Y}$, the following holds 
\[
\sum_{\alpha \in \mathbb{N}_0^d} \int_\mcal{Y} \bigg| \frac{(y - y_0)^\alpha}{\alpha!} \partial_y^\alpha \kappa(x,y_0) \bigg| dy_0 < \infty.
\]
 
\end{lemma}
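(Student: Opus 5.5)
The plan is to turn the $L^2$ derivative bounds of Definition~\ref{def:l2asymptoticsmoothness} into pointwise bounds on $\mcal{Y}$ by Sobolev embedding. These bounds then control the Taylor series uniformly, and connectedness of $\mcal{Y}^\circ$ finishes the argument.

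Fix $x$ outside the null set where some bound in Definition~\ref{def:l2asymptoticsmoothness} fails. This is a countable union of null sets, one per multi-index $\alpha$. Set $r := \dist(\mcal{X},\mcal{Y})$ and $M := \|\kappa(x,\cdot)\|_{L^2(\widehat{\mcal{Y}})}$. Every weak derivative of $\kappa(x,\cdot)$ lies in $L^2(\mcal{Y})$, so $\kappa(x,\cdot)\in H^\ell(\mcal{Y})$ for every $\ell$, and hence, after choosing a representative, $\kappa(x,\cdot)\in C^\infty(\mcal{Y}^\circ)$.

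Fix $s_0 := \lfloor d/2\rfloor+1$. Since $\mcal{Y}$ is a bounded Lipschitz domain, the embedding $H^{s_0}(\mcal{Y})\hookrightarrow C(\overline{\mcal{Y}})$ holds with a constant $C_{\mcal{Y}}$. The constant depends on $\mcal{Y}$, which is harmless because $\mcal{Y}$ is fixed here. Applying the embedding to $\partial^\alpha_y\kappa(x,\cdot)$ and then the asymptotic smoothness bound to each $\partial^{\alpha+\beta}_y\kappa(x,\cdot)$ with $|\beta|\le s_0$ gives, for $|\alpha|=p$,
\[
\sup_{y_0\in\mcal{Y}}|\partial_y^\alpha\kappa(x,y_0)| \le C_{\mcal{Y}}\, C\, M \sum_{|\beta|\le s_0}\frac{(p+|\beta|)!\,\gamma^{p+|\beta|}}{r^{p+|\beta|}} \le C_1\, (p+1)^{s_0}\,\frac{p!\,\gamma^p}{r^p}\,M .
\]
Here $C_1$ is independent of $p$; it absorbs the finitely many $\beta$, the ratios $(p+|\beta|)!/p!\le (p+s_0)^{s_0}$, and the powers of $\gamma/r$.

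Next I bound the Taylor terms. For $y,y_0\in\overline{\mcal{Y}}$ we have $|(y-y_0)^\alpha|\le \diam(\mcal{Y})^p\le (\eta r)^p$ by admissibility. Using the multinomial identity $\sum_{|\alpha|=p} p!/\alpha! = d^p$,
\[
\sum_{|\alpha|=p}\Big|\frac{(y-y_0)^\alpha}{\alpha!}\partial^\alpha_y\kappa(x,y_0)\Big| \le C_1 M (p+1)^{s_0}(\gamma\eta d)^p .
\]
This is summable in $p$ because $\gamma\eta d<1$, and the bound is uniform in $y,y_0\in\mcal{Y}$. Integrating over $y_0\in\mcal{Y}$, which has finite measure, gives the absolute integrability claim. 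The same argument works for any $y$ with $|y-y_0|\le \diam(\mcal{Y})+\epsilon$ for small $\epsilon$. So for each $y_0\in\mcal{Y}^\circ$ the Taylor series defines a real-analytic function $S_{y_0}$ on the open ball $B_{y_0}:=B(y_0,\diam(\mcal{Y})+\epsilon)\supset\mcal{Y}$.

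It remains to show $S_{y_0}=\kappa(x,\cdot)$ on $\mcal{Y}^\circ$. Choose a small ball $B(y_0,t)\subset\mcal{Y}^\circ$, which is convex. On it, the Lagrange form of the Taylor remainder along segments, together with the uniform derivative bound above, gives a remainder of order $(p+1)^{s_0}(\gamma d t/r)^{p}\to 0$. Hence $\kappa(x,\cdot)=S_{y_0}$ on $B(y_0,t)$. In particular $\kappa(x,\cdot)$ is analytic near every point of $\mcal{Y}^\circ$, so $\kappa(x,\cdot)\in C^\omega(\mcal{Y}^\circ)$. Both $\kappa(x,\cdot)$ and $S_{y_0}$ are analytic on the connected open set $\mcal{Y}^\circ$ and agree on a nonempty open subset, so the identity theorem gives equality on all of $\mcal{Y}^\circ$.

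I expect the main obstacle to be the first step, namely converting $L^2$ bounds into sup bounds with only polynomial loss $(p+1)^{s_0}$ in the factorial growth. This requires the Sobolev embedding on the possibly irregular but Lipschitz set $\mcal{Y}$, up to its boundary, so that bounds hold for $y_0$ near $\partial\mcal{Y}$. The Lipschitz hypothesis is exactly what provides this. The non-convexity of $\mcal{Y}$ is handled by working locally on balls and then invoking connectedness rather than using remainder estimates along segments that might leave $\mcal{Y}$.
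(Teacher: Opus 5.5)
Your proof is correct and follows the paper's argument. Like the paper, you use the Sobolev embedding $H^{\lfloor d/2\rfloor+1}(\mcal{Y})\hookrightarrow C(\overline{\mcal{Y}})$ to turn the $L^2$ factorial bounds into uniform pointwise bounds, then the multinomial identity with admissibility to get a geometric series with ratio governed by $\gamma\eta d<1$, and finally the identity theorem on the connected set $\mcal{Y}^\circ$. The only differences are cosmetic: you keep the polynomial loss $(p+1)^{s_0}$ explicit where the paper absorbs it into $\lambda^{|\beta|}$ with $1<\lambda<1/(\gamma\eta d)$, you prove local analyticity directly via the Lagrange remainder where the paper cites the derivative-bound characterization of real-analytic functions, and you handle the countable union of $\alpha$-dependent null sets explicitly, which the paper leaves implicit.
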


\begin{proof}[Lemma~\ref{analyticitylemma}]

First, we prove that there exist constants $M,s > 0$ such that 
$|\partial^\beta_y \kappa(x,y)| \leq M |\beta|! s^{-|\beta|} \ \ \ \forall \beta \in \mathbb{N}_0^d$ for all $y \in \mcal{Y}$ 
for almost every $x \in \mcal{X}$. 
This derivative bound implies that for every compact set $S \subset \mcal{Y}^\circ$,  $|\partial^\beta_y \kappa(x,y)| \leq M |\beta|! s^{-|\beta|} \ \ \ \forall \beta$ 
for every $y \in S$. 
Therefore, we can conclude that $\kappa(x, \cdot) \in C^\omega(\mcal{Y}^\circ)$ by Definition 2.16 and Theorem 2.17 in \cite{renardy2004introduction}.

Choose $l := \lfloor \frac{d}{2} \rfloor + 1 > \frac{d}{2}$. Let $\beta \in \mathbb{N}_0^d$. 
Let $y \in \mcal{Y}$. 
Since $\mcal{Y}$ has local Lipschitz boundary and its boundary is compact, it satisfies the strong local Lipschitz condition in Section 4.9 of \cite{adams2003sobolev}, as noted in Remark \ref{rmkconvexlocallipschitz}.
The Sobolev Imbedding theorem (in Section 4.12 Part II of \cite{adams2003sobolev}) asserts that $H^l(\mcal{Y}) \subset C(\overline{\mcal{Y}})$. Thus, for almost every $x \in \mcal{X}$, the following pointwise estimate with a Sobolev embedding constant $C_\text{SE}$ depending on $l$ holds:
\begin{align}
    |\partial_y^\beta \kappa(x,y)| & \leq \| \partial_y^\beta \kappa(x,\cdot)\|_{C(\overline{\mcal{Y}})}
        \leq C_\text{SE} \| \partial^\beta_y \kappa(x,\cdot)\|_{H^l(\mcal{Y})} 
        \leq C_\text{SE} \sum_{|\alpha| \leq l} \| \partial^{\beta + \alpha} \kappa(x,\cdot)\|_{L^2(\mcal{Y})} \nonumber \\
        & \leq C_\text{SE} \sum_{|\alpha| \leq l} C \left( \frac{\gamma}{\dist(\mcal{X},\mcal{Y})}\right)^{|\alpha + \beta|} |\alpha + \beta|! \| \kappa(x,\cdot)\|_{L^2(\widehat{\mcal{Y}})} \nonumber
\end{align}
using the $L^2$-asymptotic smoothness assumption (Definition~\ref{def:l2asymptoticsmoothness}).

Define a constant $\lambda := \frac{1}{2} + \frac{1}{2\gamma \eta d}$ so that $1 < \lambda < \frac{1}{\gamma \eta d}$.
Since $\lambda > 1$, exponential growth $\lambda^{|\beta|}$ dominates polynomial growth with respect to $|\beta|$, and there exists a constant $C_{l,\lambda} > 0$ such that 
\[
    |\alpha + \beta|! 
    \leq (l + |\beta|)!
    \leq (l + |\beta|)^l |\beta|!
    \leq C_{l,\lambda} \lambda^{|\beta|} |\beta|!
    \qquad
    \forall |\alpha| \leq l, \quad \forall \beta \in \mathbb{N}_0^d.
\]
Using this, we obtain
\begin{align}
    |\partial_y^\beta \kappa(x,y)|
        \leq  \underbrace{C_\text{SE} C C_{l,\lambda} \| \kappa(x,\cdot)\|_{L^2(\widehat{\mcal{Y}})}    \sum_{|\alpha| \leq l} \bigg( \frac{\gamma}{\dist(\mcal{X},\mcal{Y})}\bigg)^{|\alpha|} }_{=: M} |\beta|!  \bigg( \underbrace{\frac{ \lambda \gamma}{\dist(\mcal{X},\mcal{Y})}}_{=:s^{-1}}\bigg)^{|\beta|}. \nonumber 
\end{align}
Thus, $\kappa(x,\cdot) \in C^\omega(\mcal{Y}^\circ)$ for almost every $x \in \mcal{X}$.

Let $y_0 \in \mcal{Y}^\circ$. We now prove that the function $\kappa(x,\cdot)$ agrees to its Taylor series about $y_0 \in \mcal{Y}^\circ$ on $\mcal{Y}^\circ$ for almost every $x \in \mcal{X}$. Let $g_x(y) =   \sum_{\alpha} \frac{(y-y_0)^\alpha}{\alpha!} \partial^\alpha_y\kappa(x,y_0)$ denote the Taylor series of $\kappa(x,\cdot)$ about $y_0$ for $ y \in \mcal{Y}^\circ$.
Using the observation that $\sum_{|\alpha| = l} \frac{l!}{\alpha!} = d^l$, we find that
for all $y \in \mcal{Y}^\circ$, the absolute sum
\begin{align}
    \sum_{\alpha} \frac{|(y - y_0)^\alpha|}{\alpha!} |\partial^\alpha_y \kappa(x,y_0)|
    & \leq \sum_{\alpha} \frac{\diam(\mcal{Y})^{|\alpha|}}{\alpha !} M s^{-|\alpha|} |\alpha| ! 
    = M \sum_{l=0}^\infty \left(\frac{\diam(\mcal{Y})}{s}\right)^{l} \sum_{|\alpha| = l} \frac{l !}{\alpha !} \nonumber \\
    & \leq M \sum_{l=0}^\infty  \left( \frac{\lambda d \diam(\mcal{Y}) \gamma}{\dist(\mcal{X},\mcal{Y})}\right)^{l} 
    < M \sum_{l=0}^\infty  (\lambda d \eta \gamma)^{l}< \infty \nonumber 
\end{align}
converges as a geometric series
by the choice of $\eta$ and $\lambda$.
Real-analyticity of both $g_x(\cdot)$ and $\kappa(x,\cdot)$ on $\mcal{Y}^\circ$ and the fact that $\kappa(x,y) = g_x(y)$ on some neighborhood of $y_0$ in $\mcal{Y}^\circ$ allows us to conclude via the identity theorem of real-analytic functions (see 9.4.2 in \cite{dieudonne1969foundations}) that these real-analytic functions must agree on the entire open connected domain $\mcal{Y}^\circ$, i.e. $\kappa(x,y) = g_x(y)$ for all $y \in \mcal{Y}^\circ$. 

Following a similar argument with the pointwise analytic growth bound on the derivatives, observe that for almost every $x \in \mcal{X}$ and $y \in \mcal{Y}$
\begin{align}
    \sum_{\alpha \in \mathbb{N}_0^d} \int_\mcal{Y} \bigg| \frac{(y-y_0)^\alpha}{\alpha !} \partial_y^\alpha \kappa(x, y_0) \bigg| dy_0 \leq M |\mcal{Y}| \sum_{l = 0}^\infty (\lambda d\eta \gamma)^l < \infty. \nonumber
\end{align}

\end{proof}

The next lemma is a trivial estimate used in the proof of Proposition~\ref{expconvsepexpansion}. 
\begin{lemma}
    Let $\mcal{Y} \subset \R^d$ be a bounded, measurable domain. 
    Then, for every $y \in \mcal{Y}$
    \[
        \sum_{|\alpha| = l} \binom{l}{\alpha} \left(\int_\mcal{Y} |y-y_0|^{2 \alpha} dy_0 \right)^\frac{1}{2} 
            \leq |\mcal{Y}|^\frac{1}{2} \diam(\mcal{Y})^l d^l.
    \]
    \label{lemmadiameter}
\end{lemma}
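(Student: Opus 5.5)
The estimate factorizes over coordinates, so the plan is to bound each monomial integral pointwise and then collapse the multinomial sum.

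Fix $y \in \mcal{Y}$ and a multi-index $\alpha$ with $|\alpha| = l$, and interpret $|y-y_0|^{2\alpha}$ componentwise as $\prod_{j=1}^d |y_j - y_{0,j}|^{2\alpha_j}$. For every $y_0 \in \mcal{Y}$ each coordinate satisfies $|y_j - y_{0,j}| \leq |y - y_0| \leq \diam(\mcal{Y})$. Hence
\[
    |y-y_0|^{2\alpha} \leq \diam(\mcal{Y})^{2|\alpha|} = \diam(\mcal{Y})^{2l}
\]
holds pointwise for all $y_0 \in \mcal{Y}$. Integrating over $\mcal{Y}$ and taking square roots gives
\[
    \left(\int_{\mcal{Y}} |y-y_0|^{2\alpha}\, dy_0\right)^{1/2} \leq |\mcal{Y}|^{1/2}\diam(\mcal{Y})^{l}.
\]
Measurability and boundedness of $\mcal{Y}$ are used only here, to ensure $|\mcal{Y}| < \infty$. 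If $y$ lies on the boundary of $\mcal{Y}$, the same bound holds with $\diam$ interpreted on $\overline{\mcal{Y}}$, which has the same diameter.

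Next sum over $|\alpha| = l$ with the multinomial weights $\binom{l}{\alpha} = l!/\alpha!$. The multinomial theorem, applied to $(1+\cdots+1)^l$ with $d$ terms, gives
\[
    \sum_{|\alpha| = l} \binom{l}{\alpha} = d^l .
\]
This is the same identity already used in the proof of Lemma~\ref{analyticitylemma}. Combining it with the bound above yields $|\mcal{Y}|^{1/2}\diam(\mcal{Y})^l d^l$, which is the claimed estimate.

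There is no real obstacle in this argument. The only point needing care is the notational convention for $|y-y_0|^{2\alpha}$. If it were instead read as the Euclidean norm raised to $2|\alpha|$, the same pointwise bound still holds, so the proof is unaffected.
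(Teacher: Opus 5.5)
Your proof is correct and matches the paper's argument. Like the paper, you bound each factor $|y_i - y_{0,i}|^{\alpha_i}$ pointwise by $\diam(\mcal{Y})^{\alpha_i}$, integrate to get $|\mcal{Y}|^{1/2}\diam(\mcal{Y})^l$, and collapse the sum using $\sum_{|\alpha|=l}\binom{l}{\alpha} = d^l$.
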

\begin{proof}
    Let $y \in \mcal{Y}$. 
    Let $\alpha \in \mathbb{N}_0^d$ such that $|\alpha| = l$.
    For any $y_0 \in \mcal{Y}$, observe that
    \[
        |y - y_0|^\alpha 
            = \prod_{i=1}^d |y_i - y_{0_i}|^{\alpha_i} \leq \prod_{i=1}^d \diam(\mcal{Y})^{\alpha_i} 
            = \diam(\mcal{Y})^l. 
    \]

    Therefore, the summation
    \begin{align}
        \sum_{|\alpha| = l} \binom{l}{\alpha} \left(\int_\mcal{Y} |y-y_0|^{2 \alpha} dy_0 \right)^\frac{1}{2} 
            & \leq \diam(\mcal{Y})^l \sum_{|\alpha| = l} \binom{l}{\alpha} \left( \int_\mcal{Y} 1 dy_0 \right)^\frac{1}{2} \nonumber \\
            &  = |\mcal{Y}|^\frac{1}{2} \diam(\mcal{Y})^l \sum_{|\alpha| = l} \binom{l}{\alpha} 
            = |\mcal{Y}|^\frac{1}{2} \diam(\mcal{Y})^l d^l. \nonumber
    \end{align} \end{proof}
The proof of Proposition~\ref{expconvsepexpansion} now follows.

\begin{proof}[Proposition~\ref{expconvsepexpansion}]
We first note that the domain $\mcal{Y}$ with local Lipschitz boundary is Lebesgue-measurable.
Let $m \in \mathbb{N}$.
For almost every $x \in \mcal{X}$, the approximation error is expressed as 
\begin{align}
    \| \kappa(x,\cdot) - \tilde{T}_m[\kappa](x,\cdot)\|_{L^2(\mcal{Y})}^2 
        & = \int_\mcal{Y} \bigg | \kappa(x,y) - \frac{1}{|\mcal{Y}|} \int_\mcal{Y} \sum_{|\alpha| \leq m } \frac{(y-y_0)^\alpha}{\alpha !} \partial_y^\alpha \kappa(x,y_0) dy_0 \bigg |^2 dy \nonumber \\
        & = \int_\mcal{Y} \bigg | \frac{1}{|\mcal{Y}|} \int_\mcal{Y} \left[ \kappa(x,y) - \sum_{|\alpha| \leq m} \frac{(y-y_0)^\alpha}{\alpha !} \partial_y^\alpha \kappa(x,y_0) \right] dy_0 \bigg|^2 dy. \nonumber \\
\intertext{We note that the boundary $\partial \mcal{Y}$ is a set of measure zero and Lemma~\ref{analyticitylemma} can be applied to conclude  }
        \| \kappa(x,\cdot) - \tilde{T}_m[\kappa](x,\cdot)\|_{L^2(\mcal{Y})}^2 
            & = \int_\mcal{Y} \bigg | \frac{1}{|\mcal{Y}|} \int_\mcal{Y} \sum_{|\alpha| > m} \frac{(y-y_0)^\alpha}{\alpha !} \partial_y^\alpha \kappa(x,y_0) d y_0 \bigg|^2 dy. \nonumber \\
\intertext{Lemma~\ref{analyticitylemma} allows us to leverage Fubini's Theorem over $\mathbb{N}_0^d \times \mcal{Y}$ to yield}
        \| \kappa(x,\cdot) - \tilde{T}_m[\kappa](x,\cdot)\|_{L^2(\mcal{Y})}^2 
            & = \int_\mcal{Y} \bigg | \frac{1}{|\mcal{Y}|} \sum_{|\alpha| > m} \frac{1}{\alpha !} \int_\mcal{Y} (y-y_0)^\alpha \partial_y^\alpha \kappa(x,y_0) dy_0 \bigg|^2 dy \nonumber \\
            & \leq \int_\mcal{Y} \left( \frac{1}{|\mcal{Y}|} \sum_{|\alpha| > m} \frac{1}{\alpha !} \int_\mcal{Y} \big|(y-y_0)^\alpha \partial_y^\alpha \kappa(x,y_0)\big| dy_0 \right)^2 dy. \nonumber \\
\intertext{Applying Cauchy-Schwarz's inequality, we obtain}
        \| \kappa(x,\cdot) - \tilde{T}_m[\kappa](x,\cdot)\|_{L^2(\mcal{Y})}^2 
            & \leq \int_\mcal{Y} \left( \frac{1}{|\mcal{Y}|} \sum_{|\alpha| > m} \frac{1}{\alpha!} \left( \int_\mcal{Y} |y-y_0|^{2\alpha} dy_0\right)^\frac{1}{2} \| \partial_y^\alpha \kappa(x, \cdot)\|_{L^2(\mcal{Y})} \right)^2 dy \nonumber 
\end{align}
Now, we exploit the $L^2$-asymptotic smoothness assumption in the $y$-variable and make use of the fact that the summation of non-negative terms over $|\alpha| > m$ is bounded above by the summation over $|\alpha| \geq m$ to further bound $\| \kappa(x,\cdot) - \tilde{T}_m[\kappa](x,\cdot)\|_{L^2(\mcal{Y})}^2$ 

\begin{equation*}
    \leq \int_\mcal{Y}\left(\frac{1}{|\mcal{Y}|} \sum_{|\alpha| \geq m} \frac{1}{\alpha !} \left( \int_\mcal{Y} |y-y_0|^{2 \alpha} dy_0 \right)^\frac{1}{2} \frac{C \gamma^{|\alpha|} |\alpha|!}{\dist(\mcal{X},\mcal{Y})^{|\alpha|}} \| \kappa(x,\cdot )\|_{L^2(\widehat{\mcal{Y}})}\right)^2  dy
\end{equation*}
\begin{equation*}
    = \int_\mcal{Y} \left( \frac{\| \kappa(x,\cdot)\|_{L^2(\widehat{\mcal{Y}})}}{|\mcal{Y}|} \sum_{l=m}^\infty \frac{C \gamma^l}{\dist(\mcal{X},\mcal{Y})^l} \sum_{|\alpha| = l} \binom{l}{\alpha} \left( \int_\mcal{Y} |y-y_0|^{2 \alpha} dy_0 \right)^\frac{1}{2} \right)^2 dy. 
\end{equation*} 
Leveraging Lemma~\ref{lemmadiameter} here to bound the summation over $|\alpha| = l$ gives
\begin{align}
    \| \kappa(x,\cdot) - \tilde{T}_m[\kappa](x,\cdot)\|_{L^2(\mcal{Y})}^2 
        & \leq \int_\mcal{Y} \left( \frac{\| \kappa(x,\cdot)\|_{L^2(\widehat{\mcal{Y}})}}{|\mcal{Y}|} \sum_{l=m}^\infty \frac{C \gamma^l}{\dist(\mcal{X},\mcal{Y})^l} |\mcal{Y}|^\frac{1}{2} (\diam(\mcal{Y}) d)^l \right)^2 dy \nonumber \\
        & = \int_\mcal{Y} \frac{C^2 \| \kappa(x,\cdot)\|_{L^2(\widehat{\mcal{Y}})}^2}{|\mcal{Y}|} \left( \sum_{l=m}^\infty \frac{\gamma^l d^l \diam(\mcal{Y})^l}{\dist(\mcal{X},\mcal{Y})^l}\right)^2 dy. \nonumber 
\end{align}
Here we use the admissibility condition $\eta \geq \frac{\diam(\mcal{Y})}{\dist(\mcal{X},\mcal{Y})}$ to render
\begin{equation*}
   \| \kappa(x,\cdot) - \tilde{T}_m[\kappa](x,\cdot)\|_{L^2(\mcal{Y})}^2 
   \leq \frac{C^2 \| \kappa(x,\cdot)\|^2_{L^2(\widehat{\mcal{Y}})}}{|\mcal{Y}|} \int_\mcal{Y} \left( \sum_{l=m}^\infty (\gamma \eta d)^l \right)^2 dy
\end{equation*}
For ease of notation, denote  $\theta:= \gamma \eta d$. The choice of $\eta$ enforces the condition $\theta < 1$ and allows us to bound the approximation error by a convergent geometric series as follows
\begin{equation*}
    \| \kappa(x,\cdot) - \tilde{T}_m[\kappa](x,\cdot)\|_{L^2(\mcal{Y})}^2 
        \leq \frac{C^2}{|\mcal{Y}|} \| \kappa(x,\cdot)\|_{L^2(\widehat{\mcal{Y}})}^2 \int_\mcal{Y} \left( \frac{\theta^m}{1-\theta}\right)^2 dy 
        = C^2 \| \kappa(x,\cdot)\|^2_{L^2(\widehat{\mcal{Y}})} \frac{\theta^{2m}}{(1-\theta)^2}. \nonumber
\end{equation*} 
Taking square roots yields
\[ 
\| \kappa(x,\cdot) - \tilde{T}_m[\kappa](x,\cdot)\|_{L^2(\mcal{Y})} 
    \leq  C \frac{\theta^{m}}{1-\theta}\| \kappa(x,\cdot)\|_{L^2(\widehat{\mcal{Y}})} .
\]
\end{proof}

Corollary~\ref{sepexpansionwithL2orthogonal} uses the best-approximation-property of the $L^2(\mcal{Y})$-orthogonal projection to show that the kernel can be approximated to arbitrary accuracy by projection onto $\mcal{P}_m^d(\mcal{Y})$, where the required polynomial degree $m$ grows only logarithmically with the inverse error tolerance. 
\begin{corollary}
\label{sepexpansionwithL2orthogonal}
Assume the setting of Proposition~\ref{expconvsepexpansion}. 
For all $\varepsilon^\prime > 0$ and $m \in \mathbb{N}$ such that $m \geq m_{\varepsilon^\prime} := \max \Bigg\{ 1, \Bigg\lceil \frac{\log{\left( \frac{C}{\varepsilon^\prime (1-\gamma \eta d)}\right)}}{-\log{(\gamma \eta d)}} \Bigg\rceil \Bigg\}$, the $L^2(\mcal{Y})$-orthogonal projection of $\kappa(x,\cdot)$ onto $\mcal{P}^d_{m}(\mcal{Y})$ expressed with an $L^2(\mcal{Y})$-orthonormal basis $\{\phi_\alpha \}_{|\alpha| \leq m}$ of $\mcal{P}_{m}^d(\mcal{Y})$  as    
\[
    p^\star_m(x, \cdot) 
        = \sum_{|\alpha| \leq m} \langle \kappa(x, \cdot), \phi_\alpha \rangle_{L^2(\mcal{Y})} \phi_\alpha
\] 
satisfies
\[
    \| \kappa(x,\cdot) - p^\star_m(x,\cdot)\|_{L^2(\mcal{Y })} 
        \leq \varepsilon^\prime \| \kappa(x,\cdot)\|_{L^2(\widehat{\mcal{Y}})}
\]
for almost every $x \in \mcal{X}$.   
\end{corollary}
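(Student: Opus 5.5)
The plan is to combine Proposition~\ref{expconvsepexpansion} with the best-approximation property of orthogonal projections in $L^2(\mcal{Y})$, and then choose $m$ so that the geometric factor falls below $\varepsilon'$.

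First, I fix $m \geq m_{\varepsilon'}$ and an $x \in \mcal{X}$ outside the null set where Proposition~\ref{expconvsepexpansion} or Definition~\ref{def:l2asymptoticsmoothness} might fail. For such $x$ we have $\kappa(x,\cdot) \in L^2(\widehat{\mcal{Y}}) \subset L^2(\mcal{Y})$. Since $\mcal{Y}$ is bounded, $\mcal{P}^d_m(\mcal{Y})$ is a finite-dimensional, hence closed, subspace of $L^2(\mcal{Y})$. Gram--Schmidt applied to the monomials gives an orthonormal basis $\{\phi_\alpha\}$; the monomials are linearly independent on $\mcal{Y}$ because $\mcal{Y}$ is a nonempty open set. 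The stated formula for $p^\star_m(x,\cdot)$ is then exactly the orthogonal projection onto $\mcal{P}^d_m(\mcal{Y})$. This projection is independent of the chosen basis, so I need not track the indexing by $\alpha$.

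Next, Proposition~\ref{expconvsepexpansion} tells us that $\tilde T_m[\kappa](x,\cdot) \in \mcal{P}^d_m(\mcal{Y})$. The best-approximation property of orthogonal projections then gives
\[
\|\kappa(x,\cdot)-p^\star_m(x,\cdot)\|_{L^2(\mcal{Y})} \leq \|\kappa(x,\cdot)-\tilde T_m[\kappa](x,\cdot)\|_{L^2(\mcal{Y})} \leq C\frac{\theta^m}{1-\theta}\|\kappa(x,\cdot)\|_{L^2(\widehat{\mcal{Y}})},
\]
where $\theta=\gamma\eta d\in(0,1)$.

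It remains to check that $C\theta^m/(1-\theta)\leq\varepsilon'$ whenever $m\geq m_{\varepsilon'}$. Because $\log\theta<0$, this inequality is equivalent to
\[
m \geq \frac{\log\bigl(C/(\varepsilon'(1-\theta))\bigr)}{-\log\theta}.
\]
The ceiling in the definition of $m_{\varepsilon'}$ guarantees this. When the logarithm in the numerator is nonpositive, the inequality holds for every $m\geq 1$, and the $\max$ with $1$ covers that case while also ensuring $m\in\mathbb{N}$. The argument is also monotone in $m$: $\theta^m$ decreases, and the projection spaces are nested, so larger $m$ only helps.

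The only delicate point is the measure-theoretic bookkeeping: the "almost every $x$" set must be chosen independently of $m$. I would handle this by intersecting the countably many full-measure sets supplied by Proposition~\ref{expconvsepexpansion}, one for each $m\in\mathbb{N}$. Apart from that, the proof is a direct corollary.
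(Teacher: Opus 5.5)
Your proposal is correct and takes essentially the same route as the paper. Proposition~\ref{expconvsepexpansion} supplies $\tilde{T}_m[\kappa](x,\cdot)\in\mcal{P}^d_m(\mcal{Y})$ with error at most $C\frac{(\gamma\eta d)^m}{1-\gamma\eta d}\|\kappa(x,\cdot)\|_{L^2(\widehat{\mcal{Y}})}\leq\varepsilon'\|\kappa(x,\cdot)\|_{L^2(\widehat{\mcal{Y}})}$ for $m\geq m_{\varepsilon'}$, and the best-approximation property of the orthogonal projection onto the closed finite-dimensional subspace $\mcal{P}^d_m(\mcal{Y})$ transfers this bound to $p^\star_m(x,\cdot)$. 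Your extra checks (the explicit threshold computation, linear independence of the monomials, and a null set uniform in $m$) are sound, though the last is not needed since the statement fixes $m$ before quantifying over almost every $x$.
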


\begin{proof}
Let $\varepsilon^\prime > 0$. 
Let $m$ be a nonnegative integer such that $m \geq m_{\varepsilon^\prime}$. 
Proposition~\ref{expconvsepexpansion} proves there exists a separable polynomial approximation $\tilde{T}_{m}[\kappa](x,\cdot)$ such that 
\[
    \| \kappa(x,\cdot) - \tilde{T}_{m}[\kappa](x,\cdot)\|_{L^2(\mcal{Y})} 
        \leq C \frac{(\gamma \eta d)^m}{1 - \gamma \eta d}\| \kappa(x,\cdot)\|_{L^2(\widehat{\mcal{Y}})} \leq \varepsilon^\prime \| \kappa(x,\cdot)\|_{L^2(\widehat{\mcal{Y}})}
\]
for almost every $x \in \mcal{X}$. 
The space $\mcal{P}_{m}^d(\mcal{Y})$ is finite-dimensional and thus a closed linear subspace of $L^2(\mcal{Y})$. 
Clearly, the polynomial approximation $\tilde{T}_{m}[\kappa](x,\cdot) \in \mcal{P}_{m}^d(\mcal{Y})$ for almost every $x \in \mcal{X}$. 
By the orthogonal projection theorem (via Corollary 5.4 in \cite{brezis2011functional}), the orthogonal projection of $\kappa(x,\cdot)$ onto $\mcal{P}_{m}^d(\mcal{Y})$ achieves the best polynomial approximation:
\begin{align}
    \| \kappa(x,\cdot) - p^\star_m(x,\cdot)\|_{L^2(\mcal{Y})} 
    & \leq \| \kappa(x,\cdot) - \tilde{T}_{m}[\kappa](x,\cdot)\|_{L^2(\mcal{Y})} 
    \leq \varepsilon^\prime \| \kappa(x,\cdot)\|_{L^2(\widehat{\mcal{Y}})} \nonumber
\end{align}
for almost every $x \in \mcal{X}$.
\end{proof}
\begin{remark}
    Observe that $\dim \mcal{P}_{m}^d(\mcal{Y}) = \binom{m+d}{d} = \mcal{O}(m^d)$. 
    Therefore, choosing degree $m = \mcal{O}(\log{(\varepsilon^{{\prime}^{-1}})})$ yields an approximation space of dimension $\mcal{O}(\log^d{(\varepsilon^{\prime^{-1}})})$. 
\end{remark}

\subsection{Hierarchical \texorpdfstring{$\eta$}{eta}-Admissible Decomposition}
\label{hierSAdecompsection}
Due to the locality of the finite element discretization,  each entry of the matrix $\bld{A}_h$ is determined by  the kernel's restriction to the local supports of the finite element basis functions as seen here
\[
    (\bld{A}_h)_{i,j} 
        = \theta_i^\star \mcal{A} \theta_j = \int_{\mcal{X}_i} \left[ \int_{\mcal{X}_j} \kappa(x,y) \theta_j(y) dy \right] \theta_i(x) dx.
\]
In particular, each row $\bld{A}_{h}[i,:]$ then depends on the kernel restricted to the slab $\mcal{X}_i \times \Omega$. 
In this section, we show how we partition the domain $\Omega$ to retain the efficiency from approximating the operator on local admissible domains to slabs of the form $\mcal{X} \times \Omega$ for a fixed subdomain $\mcal{X} \subset \Omega$ satisfying $\diam(\mcal{X}) \leq h$. 
We use a partition that is commonly referred to as a hierarchical decomposition, which shares significant properties with the Whitney cubes arising from a Whitney decomposition (Appendix J.1 in \cite{grafakos2008classical}). 
We discuss this connection further in the discussion surrounding Figure~\ref{fig:dyadiccubes} and revisit it at the end of the section in relation to Lemmas~\ref{lemmainclusioninlargerdomain}, \ref{CZadm} and \ref{CZnon}.        

Recalling Assumption~1\ref{assumerescaling}, we  partition  the domain $[0,1]^d$ and then restrict the partition to $\Omega$ instead of partitioning $\Omega$ directly. 
We recursively construct a dyadic hierarchical partition of the cube $[0,1]^d$ as follows.
At an arbitrary dyadic level $L \geq 0$, we take a cube $I$ and check whether the domain $\mcal{X} \times I$ is admissible.
If admissible, then we stop and mark $\mcal{X} \times I$ as an admissible domain.
If the diameter of the cube $I$ is less than or equal to the mesh size, then we stop and mark $\mcal{X} \times I$ as a \textit{terminal} non-admissible domain. 
Otherwise, we split the cube into $2^d$ children, obtained by bisecting each coordinate direction, and recurse.
Initially, we start with the cube $[0,1]^d$ at level $L=0$. 
A level-$L$ dyadic cube is a product of intervals of side length $2^{-L}$, and thus the cube has diameter $\sqrt{d}2^{-L}$.
This construction ensures that larger cubes far from $\mcal{X}$ stop early as admissible cubes, while cubes near $\mcal{X}$ are refined until they either become admissible or reach the mesh scale. Similar to the partition defined in \cite{boulle2023learning}, this partition forms a dyadic tree for each slab $\mcal{X} \times \Omega$:
\begin{itemize}
    \item $I_{1,  \ldots , 1}:= I_1 \times  \cdots \times I_1 = [0,1]^d$ is the root cube for level $L = 0$. 
    \item At a given level $L \geq 0$, if $I_{j_1 , \ldots , j_d}:= I_{j_1} \times \cdots \times I_{j_d}$ is a node of the tree, then it has $2^d$ children defined as 
    \begin{equation}
    \Lambda(I_{j_1, \ldots , j_d})
        :=\{I_{2j_1 + n_j(1) , \ldots ,2 j_d + n_j(d)}: n_j \in \{0,1\}^d\}. 
    \label{eqnchildren}
    \end{equation}

    Here, if $I_j= [a,b]$ for $0 \leq a < b \leq 1$, then $I_{2j} = [a , \frac{a + b}{2}]$ and $I_{2j+1} = [\frac{a+b}{2}, b]$. 
\end{itemize}
At a level $L$, the collection of non-admissible domains can be given by the expression
\[
    P_\text{non-adm}(L) 
        = \Big\{\mcal{X} \times I_{j_1, ..., j_d}: 2^L \leq j_1, ..., j_d \leq 2^{L+1}-1, \eta \dist(\mcal{X}, I_{j_1, \ldots, j_d}) 
        < \diam(I_{j_1, \ldots, j_d}) \Big\} 
\]
where the diameter $\diam(I_{j_1, ..., j_d}) = \frac{\sqrt{d}}{2^L}$. 
The recursion never proceeds beyond the first level $n_h$ for which the dyadic cubes have diameter at most $h$. 
\begin{lemma}
\label{lemma:terminallevel}
Assume Assumption~1\ref{assumerescaling}.  
Let $h \in \mcal{H}$.  
The first  level of a dyadic partition of $[0,1]^d$ at which every dyadic cube has diameter at most $h$ is $n_h = \lceil \log_2(\sqrt{d}h^{-1}) \rceil$. 
\end{lemma}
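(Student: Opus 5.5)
The lemma is an elementary computation with the diameters of dyadic cubes. The plan has three steps. First, record the diameter of a level-$L$ cube. Then characterize the levels at which every cube has diameter at most $h$. Finally, identify the smallest such level.

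By construction in \eqref{eqnchildren}, every dyadic cube at level $L$ is a product of $d$ intervals of side length $2^{-L}$. So every level-$L$ cube has the same diameter, $\sqrt{d}\,2^{-L}$. In particular, the condition ``every dyadic cube at level $L$ has diameter at most $h$'' is equivalent to the single scalar inequality $\sqrt{d}\,2^{-L} \leq h$. Since $h>0$, this rearranges to $2^{L} \geq \sqrt{d}\,h^{-1}$, that is, $L \geq \log_2(\sqrt{d}\,h^{-1})$.

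The map $L \mapsto \sqrt{d}\,2^{-L}$ is strictly decreasing in $L$. Hence the set of admissible levels is the upward-closed set of nonnegative integers $\{L \in \mathbb{N}_0 : L \geq \log_2(\sqrt{d}h^{-1})\}$. Its minimum is $\lceil \log_2(\sqrt{d}h^{-1}) \rceil$, provided this quantity is nonnegative. I verify this with Assumption~1\ref{assumerescaling}. It gives $\Omega \subset [0,1]^d$ with $\diam(\Omega)=1$. Every mesh cell lies in $\overline{\Omega}$, so $h \leq 1 \leq \sqrt{d}$. Therefore $\log_2(\sqrt{d}h^{-1}) \geq 0$, and the ceiling is a valid level $n_h \geq 0$.

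For minimality, $n_h - 1 < \log_2(\sqrt{d}h^{-1})$ gives $\sqrt{d}\,2^{-(n_h-1)} > h$. So no earlier level has all cubes of diameter at most $h$, which confirms that $n_h$ is the first such level. There is no real obstacle here. The only point needing care is the nonnegativity of $\log_2(\sqrt{d}h^{-1})$, which is where the rescaling assumption enters.
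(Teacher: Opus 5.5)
Your proof is correct and follows the same route as the paper, which simply notes that the first such level satisfies $\sqrt{d}\,2^{-n_h} \leq h < \sqrt{d}\,2^{-(n_h-1)}$ and takes the ceiling. Your additional check that $h \leq 1 \leq \sqrt{d}$, which guarantees that $\lceil \log_2(\sqrt{d}h^{-1}) \rceil \geq 0$ is an actual level, is a detail the paper leaves implicit.
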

\begin{proof}
  The first dyadic level $n_h$ at which a cube's diameter is no larger than the mesh scale $h$ satisfies the following relation
\[
    \frac{\sqrt{d}}{2^{n_h}} 
         \leq h 
        < \frac{\sqrt{d}}{2^{n_h -1}}.
\] 
Therefore, $n_h = \lceil \log_2 \left( \sqrt{d}h^{-1} \right) \rceil$.  
\end{proof}
The admissible domains are expressed by the collection of children of non-admissible domains which are admissible in the subsequent level
\[
    P_\text{adm} 
        = \bigcup_{L=1}^{n_h} \left(\Lambda (P_\text{non-adm}(L-1))\  \backslash \  P_\text{non-adm}(L) \right)
\]
where $\Lambda(P_\text{non-adm}(L-1))$ denote the children of domains in  $P_\text{non-adm}(L-1)$: 
\[ 
    \Big\{ \mcal{X} \times I_{2j_1 + n_j(1), \ldots, 2j_d + n_j(d)}: n_j \in \{0,1\}^d, \mcal{X} \times I_{j_1, \ldots, j_d} \in P_\text{non-adm}(L-1) \Big\}.
\] 
When a domain is marked admissible, no further division is needed, and this domain remains a leaf of the dyadic tree. This recursive nature of the partition of $[0,1]^d$ with respect to $\mcal{X}$ ensures that the cubes of admissible domains, which we refer to as admissible cubes, possess a few key properties that closely resemble the structure of dyadic cubes stemming from a Whitney decomposition.
Namely, the parent of an admissible cube is not admissible, guaranteeing that an admissible cube is the unique maximal dyadic cube to satisfy the admissibility condition and the diameter of an admissible cube is comparable (up to constants) to its distance from the fixed subdomain. 
These properties are depicted in Figure \ref{fig:dyadiccubes}. A binary hierarchical decomposition of a 1-dimensional example is visualized in Figure \ref{fig:tree}. 
We refer to the remaining non-admissible domains at the final level in $P_\mathrm{non-adm}(n_h)$ as the \textit{terminal} non-admissible domains. At level $L = n_h$, the mesh-scale condition forces the recursion to end, and the terminal non-admissible domains also remain as leaves of the dyadic tree.

\begin{figure}[htbp]
    \centering
    \begin{subfigure}{0.48\textwidth}
        \centering
        \includegraphics[height=0.45\linewidth]{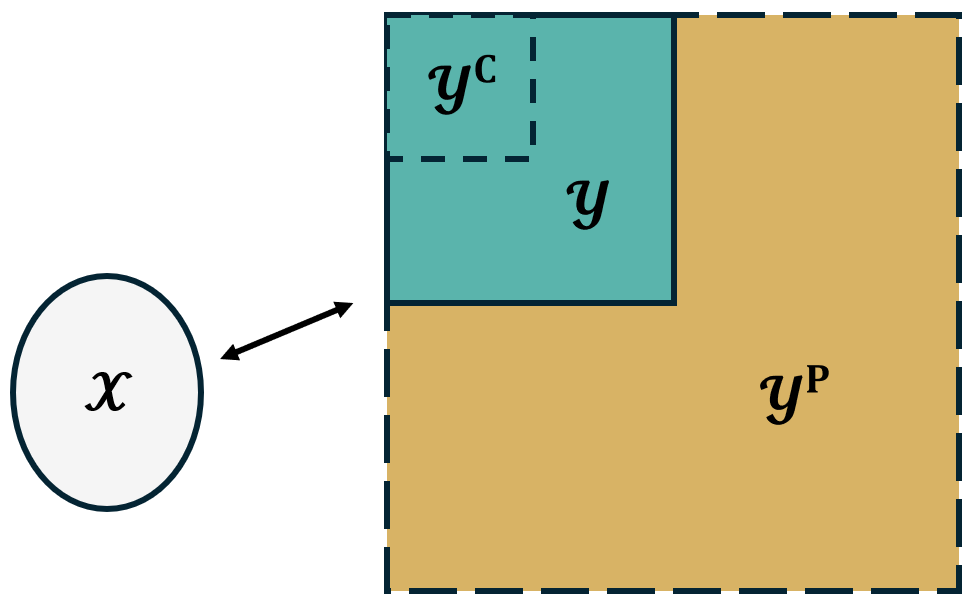}
    \caption{An admissible cube $\mcal{Y}$ from a hierarchical partition w.r.t $\mcal{X} \subset \Omega$ is the maximal dyadic cube of that dyadic decomposition to satisfy $\eta \dist(\mcal{X},\mcal{Y}) \geq \diam(\mcal{Y})$. Its children, e.g. $\mcal{Y}^C$, are also $\eta$-admissible; however, its parent $\mcal{Y}^P$ is not.}
     \label{fig:dyadiccubes}
    \end{subfigure}
    \hfill
    \begin{subfigure}{0.48\textwidth}
        \centering
\includegraphics[height=0.5\linewidth]{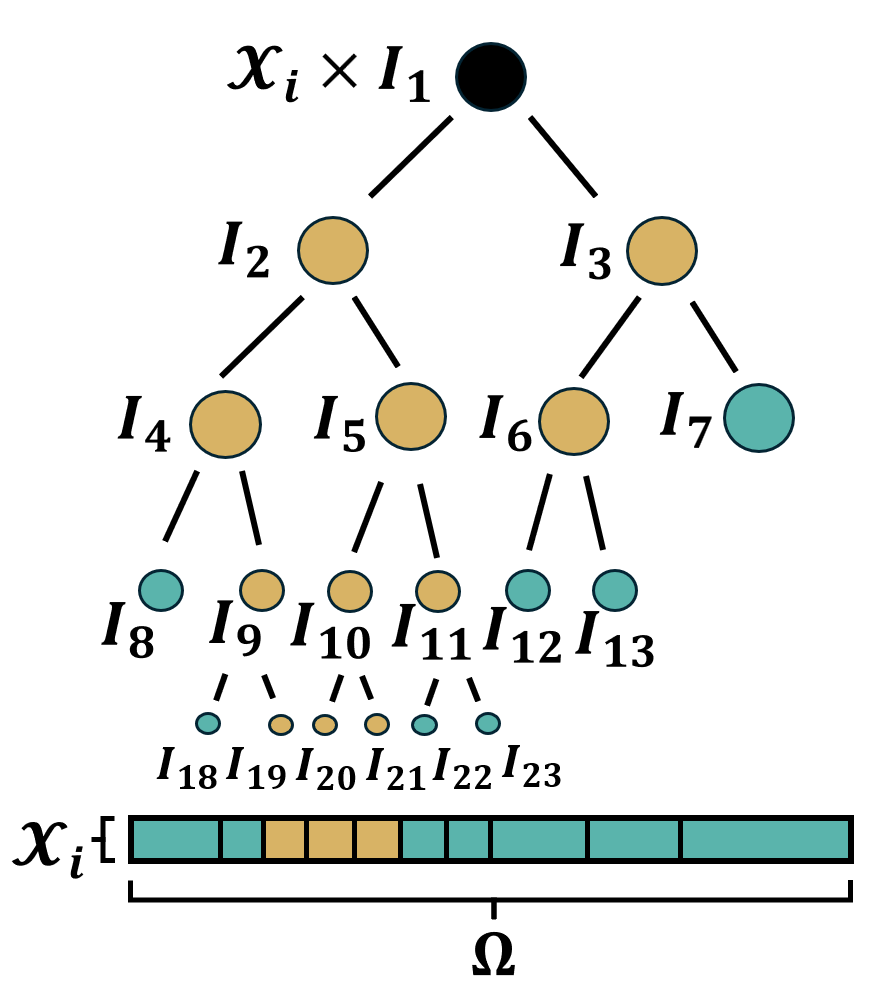}
    \caption{A binary hierarchical partition of $\mcal{X}_i \times [0,1]$ is depicted. 
    The collection of admissible domains is  $P_\mathrm{adm} = \{I_7, I_8, I_{12}, I_{13}, I_{18}, I_{22}, I_{23}\}$ and the collection of non-admissible domains at the final level $L = 4$ shown is $P_\mathrm{non-adm}( 4) = \{I_{19}, I_{20}, I_{21}\}$.}
    \label{fig:tree}
    \end{subfigure}
    \caption{}
\end{figure}

Algorithm~\ref{alghierarchicaldecomp} formalizes this recursive construction.
Given a fixed subdomain $\mcal{X}$, a dyadic cube $I_{j_1, \dots, j_d}$, an admissibility constant $\eta$ and a mesh size $h$, it
partitions the slab $\mcal{X} \times I_{j_1,\dots,j_d}$ into admissible
and terminal non-admissible domains down to the level of the finite element discretization. 
It is important to note here that the algorithm does not take $n_h$ as an input; rather, Lemma~\ref{lemma:terminallevel} shows that the mesh-scale condition forces the recursion to terminate by dyadic level $n_h$.
\begin{algorithm}

\caption{Hierarchical Decomposition of $d$-dimensional Cube $I_{j_1  , \ldots, j_d}$ w.r.t $\mcal{X}$} \label{alghierarchicaldecomp}

\begin{algorithmic}[1]
\Procedure{Hierarchical Partition}{ $\mcal{X}$, $I_{j_1 , \ldots, j_d}$, $\eta$, $h$}

\If {$\mcal{X} \times I_{j_1, \ldots, j_d}$ is $\eta$-admissible} 

\Return $P_\text{adm} = \{\mcal{X} \times I_{j_1, \ldots, j_d}\} \ \& \ P_\text{non} = \emptyset$

\ElsIf{$\diam(I_{j_1  \times \cdots \times j_d}) \leq h$}

\Return $P_\text{adm} = \emptyset \ \& \ P_\text{non} = \{\mcal{X} \times I_{j_1 , \ldots, j_d}\}$

\Else 
\State Initialize sets $P_\text{adm} = \emptyset$ \ \& \ $P_\text{non} = \emptyset$

\State Form $2^d$ children $\Lambda(I_{j_1  \times \cdots \times j_d})$
\color{gray} \algorithmiccomment{$\Lambda$ defined in Eqn. \ref{eqnchildren}}
\color{black}

\For {child $I_{j_1^\prime, \ldots,  j_d^\prime} \in \Lambda(I_{j_1, \ldots, j_d})$} \do \\

\State $(P_\text{adm}^\prime,P_\text{non}^\prime) =$ \textsc{Hierarchical Partition}($\mcal{X},I_{j_1^\prime , \ldots,  j_d^\prime}, \eta, h$)

\State Set $P_\text{adm} = P_\text{adm} \cup P_\text{adm}^\prime $ \ \& \ $P_\text{non} = P_\text{non} \cup P_\text{non}^\prime$
\EndFor

\Return $(P_\text{adm}, P_\text{non})$

\EndIf
\EndProcedure
\end{algorithmic}
\end{algorithm}
When initialized with inputs $(\mcal{X}, [0,1]^d, \eta, h)$, 
the algorithm returns the collections $P_\text{adm}$ and $P_\text{non}$ of admissible leaves and terminal non-admissible leaves, respectively. 
Writing these leaves as $\mcal{X} \times \mcal{Y}_j$ indexed by $j \in \{1: |P_\text{adm}|\}$ and
$\mcal{X} \times \mcal{N}_p$ indexed by $p \in \{1:|P_\text{non}|\}$, these two sets restricted to $\mcal{X} \times \Omega$ yield the following covering, disjoint up to boundaries: 
\begin{equation} 
\label{hierdecompeqn}
    \mcal{X} \times \Omega 
        = \bigcup_{j=1}^{|P_\text{adm}|} [\mcal{X} \times (\mcal{Y}_{j} \cap \Omega)] \cup \bigcup_{p=1}^{|P_\text{non}|} [\mcal{X} \times (\mcal{N}_{p} \cap \Omega)].  
\end{equation}
In the subsequent lemmas, we show that the resulting decomposition covers most of the slab with a number of admissible domains that grows only logarithmically in the inverse mesh size, while covering the region near $\mcal{X}$ with a uniformly bounded number of  terminal non-admissible blocks at the mesh scale.

\begin{lemma} [Number of Non-admissible domains]
\label{lemmaNAdomains}
Assume Assumptions 1\ref{assumefemmesh} and 1\ref{assumerescaling}. 
Let $h \in \mcal{H}$ and let $\mcal{X} \subset \Omega$ such that $\diam(\mcal{X}) \leq h$. 
Let $\eta >0 $.
Define $n_h = \lceil \log_2(\sqrt{d}h^{-1})\rceil$.
The number of non-admissible domains at any hierarchical level $L=1, \ldots, n_h$ is bounded uniformly by
\begin{equation}
    |P_\text{non-adm}(L)| 
        \leq \omega_d \sqrt{d}^d \left( 3 + \frac{1}{\eta}\right)^d =: K_{d,\eta}
\label{numberofnonadmissible}
\end{equation} 
with $\omega_d = \text{Vol}(B(0,1))$ being the volume of the unit ball in $d$ dimensions.
In particular, let $(P_\text{adm}, P_\text{non})$ be the output of Algorithm~\ref{alghierarchicaldecomp} applied to $(\mcal{X}, [0,1]^d, \eta, h)$. Then the number of terminal non-admissible domains satisfies
\[
    |P_\text{non}| \leq  K_{d,\eta}.
\] 
\end{lemma}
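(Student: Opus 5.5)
A volume-packing argument gives the bound. Fix a level $L$ and write $s := 2^{-L}$ for the side length, so that every level-$L$ dyadic cube $I$ has $\diam(I) = \sqrt{d}\,s$. A cube $I$ lies in $P_\text{non-adm}(L)$ only if $\eta \dist(\mcal{X}, I) < \diam(I)$, that is, $\dist(\mcal{X}, I) < \sqrt{d}\,s/\eta$. I will show that all such cubes sit inside a single ball of controlled radius. Because the dyadic cubes of a fixed level have pairwise-disjoint interiors, their number is at most the volume of that ball divided by $s^d$.

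\textbf{Containment.} Pick any point $x_0 \in \mcal{X}$. For a non-admissible cube $I$ there are points $x \in \mcal{X}$ and $y' \in I$ with $|x - y'| < \sqrt{d}\,s/\eta$ (taking the infimum if needed). For any $y \in I$ the triangle inequality gives
\[
|y - x_0| \le |x_0 - x| + |x - y'| + |y' - y| < \diam(\mcal{X}) + \frac{\sqrt{d}\,s}{\eta} + \sqrt{d}\,s .
\]
The key point is to bound $\diam(\mcal{X}) \le h$ by a multiple of $\sqrt{d}\,s$. For $L \le n_h$, Lemma~\ref{lemma:terminallevel} gives $h < \sqrt{d}\,2^{-(n_h-1)} = 2\sqrt{d}\,2^{-n_h} \le 2\sqrt{d}\,s$. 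Hence every such $I$ is contained in the ball $B\bigl(x_0, \sqrt{d}\,s\,(3 + 1/\eta)\bigr)$.

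\textbf{Volume count.} The cubes in $P_\text{non-adm}(L)$ have disjoint interiors and each has volume $s^d$. Therefore
\[
|P_\text{non-adm}(L)|\, s^d \le \omega_d \bigl(\sqrt{d}\,s\,(3 + 1/\eta)\bigr)^d ,
\]
and dividing by $s^d$ yields $K_{d,\eta}$. The bound is uniform in $L$ and $h$.

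\textbf{Terminal count.} Every terminal non-admissible leaf returned by Algorithm~\ref{alghierarchicaldecomp} is a cube $I$ that is non-admissible and has $\diam(I) \le h$. All of its ancestors had diameter larger than $h$ and were non-admissible, since otherwise the recursion would have stopped at them. A cube at level $L$ satisfies $\sqrt{d}\,2^{-L} \le h$ exactly when $L \ge n_h$, so every terminal leaf lies at level exactly $n_h$. It is therefore an element of $P_\text{non-adm}(n_h)$, which gives $|P_\text{non}| \le K_{d,\eta}$.

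The main obstacle is that $\diam(\mcal{X})$ must be comparable to the cube scale at every level $L \le n_h$. This is exactly where the choice of $n_h$ from Lemma~\ref{lemma:terminallevel} enters, and it is the reason the constant contains $3$ rather than $2$.
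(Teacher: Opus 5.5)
Your proof is correct and follows essentially the same route as the paper: both place every level-$L$ non-admissible cube inside a ball of radius $h + (1+1/\eta)\diam(I)$ around $\mcal{X}$, use $h < 2\diam(I)$ for $L \le n_h$ (from Lemma~\ref{lemma:terminallevel}), and conclude by a volume count. Both then note that terminal leaves lie in $P_\text{non-adm}(n_h)$; your argument that terminal leaves sit at exactly level $n_h$, because all their ancestors had diameter greater than $h$, supplies a step the paper only asserts.
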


\begin{proof}
Let $1 \leq L \leq n_h$ be an arbitrary level. 
Let $\mcal{X} \times \mcal{N}$ be an arbitrary non-admissible domain at level $L$ which thus satisfies: 
\[
    \eta \dist(\mcal{X}, \mcal{N}) < \diam(\mcal{N}) = \frac{\sqrt{d}}{2^L}.
\]
Pad the domain $\mcal{X}$ with a Euclidean buffer $\delta = (1 + \frac{1}{\eta}) \diam(\mcal{N})$, and denote the padded domain $\mcal{X}_\delta = \{y \in [0,1]^d: \dist(y,\mcal{X}) \leq \delta\}$. 
For any $y \in \mcal{N}$, 
\[
    \dist(y,\mcal{X}) 
        \leq \dist(\mcal{N}, \mcal{X}) + \diam(\mcal{N}) 
        < \left( 1 + \frac{1}{\eta} \right) \diam(\mcal{N}) 
        = \delta
\]
and hence $\mcal{N} \subset \mcal{X}_\delta$. Since $\mcal{X}$ can be contained within a ball of radius $h$, the padded domain $\mcal{X}_\delta$ can be contained in a ball of radius $(1 + \frac{1}{\eta}) \diam(\mcal{N}) + h$. 
Then, the number of non-admissible domains at level $L$ is bounded by
\begin{align}
    |P_\text{non-adm}(L)| 
        & \leq \frac{\mathrm{Vol}(\mcal{X}_\delta)}{\mathrm{Vol}(\mcal{N})} 
            \leq \frac{\mathrm{Vol}(B_{r= (1 + \frac{1}{\eta}) \diam(\mcal{N}) + h}) }{\left(\frac{\diam(\mcal{N})}{\sqrt{d}}\right)^d}
            \nonumber \\
        & \leq \omega_d \frac{\left( (1 + \frac{1}{\eta}) \diam(\mcal{N}) + h\right)^d}{\left(\frac{\diam(\mcal{N})}{\sqrt{d}}\right)^d}
            = \omega_d \sqrt{d}^d \left( (1 + \frac{1}{\eta}) + \frac{h}{\diam(\mcal{N})} \right)^d. 
            \nonumber
\end{align}
Using $\frac{h}{\diam(\mcal{N})} < 2$, which holds for all $1 \leq L \leq n_h$, we obtain
\[
        |P_\text{non-adm}(L)|  
        \leq \omega_d \sqrt{d}^d \left( 3 + \frac{1}{\eta} \right)^d 
        = K_{d,\eta}.
\]
Every terminal non-admissible block returned by the algorithm is a level-$n_h$ non-admissible block. Therefore, $P_\text{non} \subset P_\text{non-adm}(n_h)$, and the desired bound follows from the level-wise estimate with $L = n_h$. 
\end{proof}

We can leverage Lemma~\ref{lemmaNAdomains} to bound the number of admissible domains by a logarithmic factor with respect to the inverse mesh size.

\begin{lemma}[Number of Admissible Domains]
\label{lemmaAdomains}
Assume Assumptions 1\ref{assumefemmesh} and 1\ref{assumerescaling}. 
Let $h \in \mcal{H}$ and let $\mcal{X} \subset \Omega$ such that $\diam(\mcal{X}) \leq h$. 
Let $\eta > 0$. 
Let $(P_\text{adm}, P_\text{non})$ be the output of Algorithm~\ref{alghierarchicaldecomp} applied to $(\mcal{X}, [0,1]^d, \eta, h)$. 
Define $n_h = \lceil \log_2(\sqrt{d} h^{-1})\rceil$.
The number of admissible domains is bounded by 
\begin{equation} 
    |P_\text{adm}| 
        \leq 2^d K_{d,\eta}n_h.
\label{numberofadmissible}
\end{equation}

\end{lemma}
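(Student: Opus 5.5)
The plan is to charge every admissible leaf to its parent in the dyadic tree. By construction of Algorithm~\ref{alghierarchicaldecomp}, a cube is subdivided only if it is not admissible and its diameter exceeds $h$. Consequently every admissible leaf $\mcal{X} \times \mcal{Y}$ other than the root has a parent $\mcal{X} \times \mcal{Y}^P$ that was refined, and such a parent is non-admissible. If the root $[0,1]^d$ were itself admissible, then $|P_\text{adm}| = 1 \leq 2^d K_{d,\eta} n_h$, since $n_h \geq 1$ once $h < \sqrt{d}$, and this always holds because $h \leq \diam(\Omega) = 1$. So we may assume every admissible leaf lies at some level $L \geq 1$ and that its parent lies at level $L-1$.

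We next identify the levels involved. The recursion stops at level $n_h$, because cubes at level $n_h$ have diameter at most $h$ (Lemma~\ref{lemma:terminallevel}). Hence only cubes at levels $0, \ldots, n_h - 1$ are ever refined, and admissible leaves live at levels $1, \ldots, n_h$. This matches the representation
\[
P_\text{adm} = \bigcup_{L=1}^{n_h} \left( \Lambda(P_\text{non-adm}(L-1)) \setminus P_\text{non-adm}(L) \right).
\]
From this representation and the fact that each node has exactly $2^d$ children, we obtain
\[
|P_\text{adm}| \leq \sum_{L=1}^{n_h} 2^d \, |P_\text{non-adm}(L-1)|.
\]

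It then remains to bound each $|P_\text{non-adm}(L-1)|$ by $K_{d,\eta}$ using Lemma~\ref{lemmaNAdomains}. That lemma covers levels $1, \ldots, n_h$, so it applies directly for $L-1 \geq 1$. For level $0$ there is only the single cube $[0,1]^d$, so $|P_\text{non-adm}(0)| \leq 1 \leq K_{d,\eta}$. The last inequality holds because $\omega_d \sqrt{d}^d 3^d \geq 1$ for every $d \geq 1$; for example, $\omega_1 = 2$. Summing over the $n_h$ levels gives
\[
|P_\text{adm}| \leq 2^d K_{d,\eta} n_h.
\]

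The main subtlety is bookkeeping rather than estimation. We must ensure that the sets $P_\text{non-adm}(L)$ counted in Lemma~\ref{lemmaNAdomains}, which consist of all level-$L$ dyadic cubes violating admissibility, contain the parents that are actually refined by the algorithm. Refined parents are non-admissible cubes at that level, so this inclusion holds and the counting is only an overestimate. We must also handle the level-$0$ edge case separately, as done above.
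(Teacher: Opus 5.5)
Your proposal is correct and follows essentially the same route as the paper: charge each admissible leaf at level $L$ to its non-admissible parent in $P_\text{non-adm}(L-1)$, bound each level by $K_{d,\eta}$ via Lemma~\ref{lemmaNAdomains}, and sum $2^d K_{d,\eta}$ over $L=1,\ldots,n_h$. Your separate treatment of level $0$, where $|P_\text{non-adm}(0)|\le 1\le K_{d,\eta}$, fills a small gap the paper leaves implicit, since Lemma~\ref{lemmaNAdomains} is stated only for $L\ge 1$. Your root-admissible case is vacuous, because $\mcal{X}\subset[0,1]^d$ forces $\dist(\mcal{X},[0,1]^d)=0$. This also means you do not need the claim $h<\sqrt{d}$, which fails when $d=1$ and $h=1$.
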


\begin{proof}
Fix a subdomain $\mcal{X}$. For any level $L = 1, \ldots, n_h$,  Lemma~\ref{lemmaNAdomains} uniformly bounds the number of non-admissible domains by 
$|P_\text{non-adm}(L)| \leq K_{d,\eta}$. 
Therefore, the number of children of non-admissible blocks from level $L-1$ that are considered admissible at level $L$ can be bounded by
\[
    |\Lambda(P_\text{non-adm}(L-1))  \ \backslash \ P_\text{non-adm}(L)| 
        \leq |\Lambda(P_\text{non-adm}(L-1))| 
        \leq 2^d K_{d,\eta}.
\] 
Thus, the total number of admissible domains is bounded by
\[
    |P_\text{adm}| 
        \leq \sum_{L=1}^{n_h} 2^d K_{d,\eta} 
        = 2^d K_{d,\eta} n_h.
\]
\end{proof}
The preceding lemmas control the number of admissible and terminal non-admissible domains in the hierarchical decomposition of $[0,1]^d$ with respect to $\mcal{X}$. 
In Section~\ref{rowspaceconstructionsection}, we will construct low-dimensional subspaces that approximate the row space for each row $(\bld{A}_h)_{i,:}$ where $i = 1, \ldots, |\mcal{T}_h|$. 
For these row-wise estimates, we need bounds on two more quantities. 
The local approximation estimate from Corollary~\ref{sepexpansionwithL2orthogonal} is applied separately on each admissible cube after restricting it to the convex components of $\Omega$. 
Thus, for an admissible cube $\mcal{Y}_{i,j}$ and a convex component $\mcal{C}_n$, the local error on $\mcal{Y}_{i,j} \cap \mcal{C}_n$ is controlled by the kernel norm on an enlarged set $\widehat{\mcal{Y}_{i,j} \cap \mcal{C}_n}$. 
When these local estimates are summed over all admissible cubes $j = 1, \ldots, N_{\text{adm},i}$ and components $n = 1, \ldots, N_{cc}$, the same admissible cube of the hierarchical decomposition can be counted multiple times. 
More precisely, in the next section, we will need to bound the two multiplicities
\[
\mathrm{max}_{l \in \{1, ..., N_\text{adm}(i)\}} \bigg|\{j : \mcal{Y}_{i,l} \cap \widehat{\mcal{Y}_{i,j} \cap \mcal{C}_n} 
            \neq \emptyset\}\bigg|
\]
and
\[
\mathrm{max}_{p \in \{1, ..., N_\text{non}(i)\}} \bigg|\{j : \mcal{N}_{i,p} \cap \widehat{\mcal{Y}_{i,j} \cap \mcal{C}_n} 
            \neq \emptyset\}\bigg|.
\]
Directly counting the component-wise enlargements is inconvenient. 
We therefore introduce, for each admissible cube $\mcal{Y}_{i,j}$, a larger cube-based enlargement $\tilde{\mcal{Y}}_{i,j}$. 
Lemma~\ref{lemmainclusioninlargerdomain} below shows that every component-wise enlargement $\widehat{\mcal{Y}_{i,j} \cap \mcal{C}_n}$ is contained in $\tilde{\mcal{Y}}_{i,j}$. 

\begin{lemma}
\label{lemmainclusioninlargerdomain}
Let $\mcal{X} \subset \Omega$. Let $\mcal{Y} \subset [0,1]^d$ such that $\mcal{X} \times \mcal{Y}$ is $\eta$-admissible. 
Define $\tilde{\mcal{Y}}:= \{z^\prime \in [0,1]^d: \dist(z^\prime, \mcal{Y}) \leq \frac{1}{2}(1 + \eta) \dist(\mcal{X}, \mcal{Y})\}$. 
Let $\mcal{Z} \subset \mcal{Y}$. 
Then $\widehat{\mcal{Z}} \subset \tilde{\mcal{Y}} \cap \Omega$.
\end{lemma}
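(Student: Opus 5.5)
The plan is to unpack the definition of $\widehat{\mcal{Z}}$ and combine two elementary distance estimates: monotonicity of $\dist(\mcal{X},\cdot)$ under inclusion, and the admissibility bound $\diam(\mcal{Y}) \leq \eta \dist(\mcal{X},\mcal{Y})$. By Definition~\ref{def:l2asymptoticsmoothness} (as used in Proposition~\ref{expconvsepexpansion}), the enlargement of $\mcal{Z}$ with respect to $\mcal{X}$ is
\[
    \widehat{\mcal{Z}} = \Big\{ z \in \Omega : \dist(z,\mcal{Z}) \leq \tfrac{1}{2}\dist(\mcal{X},\mcal{Z}) \Big\}.
\]
Since $\Omega \subset [0,1]^d$ by Assumption~1\ref{assumerescaling}, every $z \in \widehat{\mcal{Z}}$ already lies in $\Omega \cap [0,1]^d$. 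It therefore suffices to show that $\dist(z,\mcal{Y}) \leq \tfrac12(1+\eta)\dist(\mcal{X},\mcal{Y})$.

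Fix $z \in \widehat{\mcal{Z}}$. Because $\mcal{Z} \subset \mcal{Y}$, we have $\dist(z,\mcal{Y}) \leq \dist(z,\mcal{Z})$, so the left-hand side is controlled by $\tfrac12 \dist(\mcal{X},\mcal{Z})$. The key step is to bound $\dist(\mcal{X},\mcal{Z})$ from above in terms of $\dist(\mcal{X},\mcal{Y})$. Note that monotonicity goes the wrong way here: $\dist(\mcal{X},\mcal{Z}) \geq \dist(\mcal{X},\mcal{Y})$. Instead we use the diameter of $\mcal{Y}$.

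Take any $w \in \mcal{Z}$ (assume $\mcal{Z}$ nonempty; otherwise $\widehat{\mcal{Z}}$ is empty or the statement is vacuous). For any $x \in \mcal{X}$ and $y \in \mcal{Y}$,
\[
    |x - w| \leq |x - y| + |y - w| \leq |x-y| + \diam(\mcal{Y}).
\]
Taking the infimum over $x$ and $y$ gives
\[
    \dist(\mcal{X},\mcal{Z}) \leq \dist(\mcal{X},w) \leq \dist(\mcal{X},\mcal{Y}) + \diam(\mcal{Y}) \leq (1+\eta)\dist(\mcal{X},\mcal{Y}),
\]
where the last inequality is the $\eta$-admissibility of $\mcal{X}\times\mcal{Y}$ from Definition~\ref{def:admissible}. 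Combining the two estimates yields
\[
    \dist(z,\mcal{Y}) \leq \tfrac12 (1+\eta)\dist(\mcal{X},\mcal{Y}),
\]
so $z \in \tilde{\mcal{Y}}$, and therefore $z \in \tilde{\mcal{Y}} \cap \Omega$.

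The only real obstacle is the step just described: noticing that shrinking $\mcal{Y}$ to $\mcal{Z}$ can increase the separation from $\mcal{X}$, so the enlargement radius $\tfrac12\dist(\mcal{X},\mcal{Z})$ may exceed $\tfrac12\dist(\mcal{X},\mcal{Y})$. This is exactly why $\tilde{\mcal{Y}}$ carries the factor $(1+\eta)$, and the diameter-based triangle inequality bounds the excess. The remaining care points are routine: the infima are handled with arbitrary representatives, and the degenerate case $\mcal{Z}=\emptyset$ is dismissed.
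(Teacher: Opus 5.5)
Your proof is correct and is essentially the paper's argument: the same chain $\dist(z,\mcal{Y}) \le \dist(z,\mcal{Z}) \le \tfrac12\dist(\mcal{X},\mcal{Z}) \le \tfrac12\bigl(\dist(\mcal{X},\mcal{Y})+\diam(\mcal{Y})\bigr) \le \tfrac12(1+\eta)\dist(\mcal{X},\mcal{Y})$, and you additionally spell out two things the paper leaves implicit, namely the triangle-inequality justification of the third inequality and the use of $\Omega\subset[0,1]^d$. One small correction to your aside on $\mcal{Z}=\emptyset$: under the convention $\inf\emptyset=+\infty$ one gets $\widehat{\emptyset}=\Omega$, not $\emptyset$, so nonemptiness of $\mcal{Z}$ is a genuine tacit hypothesis rather than a vacuous case; it is harmless here because every application in the paper takes $\mcal{Z}=\mcal{Y}_{i,j}\cap\mcal{C}_n$ of positive measure.
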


\begin{proof}
Let 
$z \in \widehat{\mcal{Z}}  
    := \{z^\prime \in \Omega: \dist(z^\prime, \mcal{Z}) 
    \leq \frac{1}{2} \dist(\mcal{X}, \mcal{Z})\}$.
Since $\mcal{Z} \subset \mcal{Y}$, 
\begin{align} 
    \dist(z, \mcal{Y}) 
        & \leq \dist(z, \mcal{Z}) \leq \frac{1}{2} \dist(\mcal{X}, \mcal{Z}) \nonumber \\
        & \leq \frac{1}{2} \left( \dist(\mcal{X}, \mcal{Y}) + \diam(\mcal{Y})\right) \nonumber \\
        & \leq \frac{1}{2} \left( \dist(\mcal{X}, \mcal{Y}) + \eta \dist(\mcal{X}, \mcal{Y})\right). \nonumber
\end{align} 
Therefore, $z \in \tilde{\mcal{Y}}$. Since $z \in \widehat{\mcal{Z}} \subset \Omega$, it follows that $z \in \tilde{\mcal{Y}} \cap \Omega. $
\end{proof}
Consequently, it is enough to bound the following two quantities
\begin{equation*}
    \mathrm{max}_{l \in \{1, ..., N_\text{adm}(i)\}} \bigg|\{j : \mcal{Y}_{i,l} \cap \tilde{\mcal{Y}}_{i,j} 
        \neq \emptyset\}\bigg|
\end{equation*} 
and 
\begin{equation*}
    \mathrm{max}_{p \in \{1, ..., N_\text{non}(i)\}} \bigg|\{j : \mcal{N}_{i,p} \cap \tilde{\mcal{Y}}_{i,j} 
        \neq \emptyset\}\bigg|.
\end{equation*}
Figure~\ref{fig:countingadmissibleandnon} visualizes the overlaps of an arbitrary admissible or terminal non-admissible cube with these larger cube-based enlargements. 
It is well-known there is a uniform bound for the number of neighboring cubes of any given dyadic cube from a Whitney decomposition (see J.1 in \cite{grafakos2008classical}). 
The same geometric principle applies to the hierarchical decomposition. 
Lemmas \ref{CZadm} and \ref{CZnon} leverage technical packing arguments of a similar nature with $\eta < 1$ to uniformly bound these two quantities by constants $K_{W,\text{adm}}$ and $K_{W,\text{non}}$ respectively. 
Both constants depend only on $d$ and $\eta$.

\begin{figure}[htbp]
    \centering
    \begin{subfigure}[t]{0.45\textwidth}
        \centering
        \includegraphics[width=0.68\linewidth]{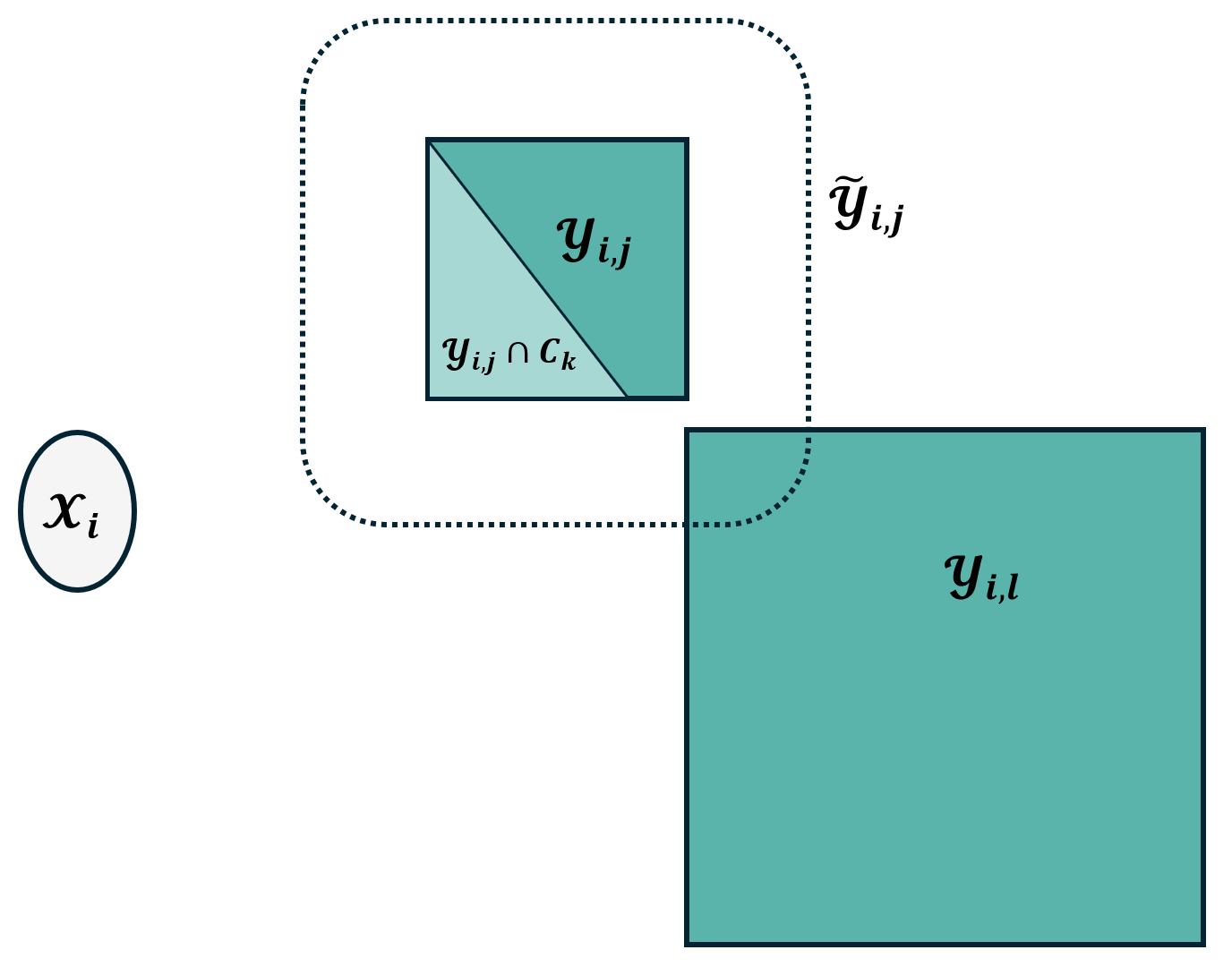}
        \caption{An admissible cube $\mcal{Y}_{i,j}$ whose enlarged
        domain intersects with a given admissible cube $\mcal{Y}_{i,l}$.}
    \end{subfigure}
    \hfill
    \begin{subfigure}[t]{0.45\textwidth}
        \centering
        \includegraphics[width=0.8\linewidth]{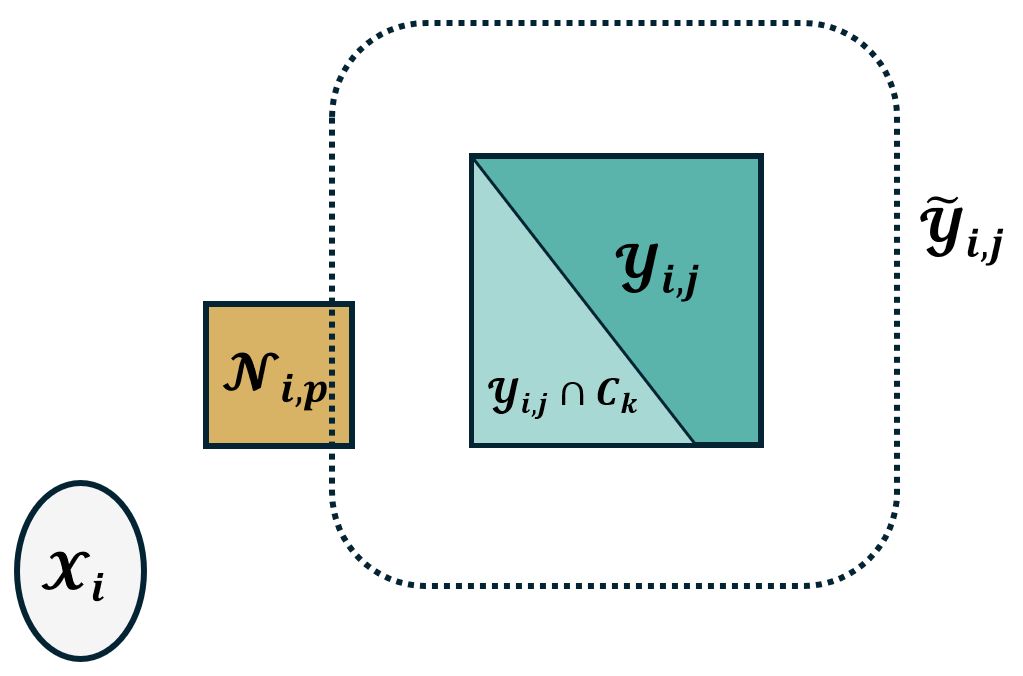}
        \caption{An admissible cube $\mcal{Y}_{i,j}$ whose enlarged
        domain intersects with a given non-admissible cube $\mcal{N}_{i,p}$.}
    \end{subfigure}
    \caption{}
    \label{fig:countingadmissibleandnon}
\end{figure}

\subsubsection{Overlap Bound for Admissible Cubes}
We first bound the number of enlarged admissible cubes that can intersect a fixed admissible leaf. 

\begin{lemma}
\label{CZadm} 
Let $0 < \eta < 1$, $h \in \mcal{H}$, let $\mcal{X} \subset \Omega$, and let $(P_\text{adm}, P_\text{non})$ be the output of Algorithm~\ref{alghierarchicaldecomp} applied to $(\mcal{X}, [0,1]^d, \eta, h)$.
Then there exists a constant $K_{W,\text{adm}}$ depending only on $d$ and $\eta$ such that for every admissible leaf $\mcal{X} \times \mcal{Y} \in P_\text{adm}$,
\[
    |\{ \mcal{X} \times \mcal{Q} \in P_\text{adm}: \mcal{Y} \cap \tilde{\mcal{Q}} \neq \emptyset \}| \leq K_{W,\text{adm}}.
\]

\end{lemma}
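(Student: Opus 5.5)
The plan is a Whitney-type packing argument. The first step is to show that every admissible leaf has diameter comparable to its distance from $\mcal{X}$. Let $\mcal{X}\times\mcal{Q}\in P_\text{adm}$ with $\mcal{Q}$ a dyadic cube of level $L\geq 1$. Its parent $\mcal{Q}^P$ has diameter $2\diam(\mcal{Q})$, contains $\mcal{Q}$, and is non-admissible. Hence
\[
\eta\,\dist(\mcal{X},\mcal{Q}) \le \eta\bigl(\dist(\mcal{X},\mcal{Q}^P)+\diam(\mcal{Q}^P)\bigr) < (2+2\eta)\diam(\mcal{Q}).
\]
Combining this with admissibility, $\diam(\mcal{Q})\le \eta\,\dist(\mcal{X},\mcal{Q})$, gives the two-sided estimate
\[
\tfrac{1}{\eta}\diam(\mcal{Q}) \le \dist(\mcal{X},\mcal{Q}) < \tfrac{2(1+\eta)}{\eta}\diam(\mcal{Q}).
\]
The level-$0$ root cannot be admissible unless it is the only leaf, and in that case the claim is trivial.

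The second step compares the sizes of two overlapping leaves. Fix an admissible leaf $\mcal{Y}$ and suppose $\mcal{Y}\cap\tilde{\mcal{Q}}\neq\emptyset$. Pick $y\in\mcal{Y}$ with $\dist(y,\mcal{Q})\le\frac12(1+\eta)\dist(\mcal{X},\mcal{Q})$. Then
\[
\dist(\mcal{X},\mcal{Y})\le \dist(\mcal{X},y) \le \dist(\mcal{X},\mcal{Q})+\diam(\mcal{Q})+\tfrac12(1+\eta)\dist(\mcal{X},\mcal{Q}) \le c_1(\eta)\,\dist(\mcal{X},\mcal{Q}).
\]
For the reverse inequality, pick $z\in\mcal{Q}$ with $|y-z|\le \frac12(1+\eta)\dist(\mcal{X},\mcal{Q})$. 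Then
\[
\dist(\mcal{X},\mcal{Q})\le \dist(\mcal{X},y)+|y-z| \le \dist(\mcal{X},\mcal{Y})+\diam(\mcal{Y})+\tfrac12(1+\eta)\dist(\mcal{X},\mcal{Q}).
\]
Because $\eta<1$, the coefficient $\frac12(1+\eta)$ is strictly less than $1$, so this rearranges to $\dist(\mcal{X},\mcal{Q})\le \frac{2(1+\eta)}{1-\eta}\,\dist(\mcal{X},\mcal{Y})$. This is exactly where the hypothesis $\eta<1$ enters. Feeding both distance comparisons through the first step gives $c^{-1}\diam(\mcal{Y})\le\diam(\mcal{Q})\le c\,\diam(\mcal{Y})$ with $c=c(\eta)$. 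Since dyadic diameters are powers of two times $\sqrt d$, the levels of $\mcal{Q}$ and $\mcal{Y}$ therefore differ by at most $\lceil\log_2 c\rceil$.

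The third step is the volume count. Every such $\mcal{Q}$ lies within distance $\frac12(1+\eta)\dist(\mcal{X},\mcal{Q})+\diam(\mcal{Q})\le c_2(\eta)\diam(\mcal{Y})$ of $\mcal{Y}$. Hence all such $\mcal{Q}$ lie in a ball of radius $c_3(\eta)\diam(\mcal{Y})$. The leaves have pairwise disjoint interiors, and each has volume at least $(\diam(\mcal{Y})/(c\sqrt d))^d$. The number of such $\mcal{Q}$ is therefore at most $\omega_d c_3^d (c\sqrt d)^d=:K_{W,\text{adm}}$, which depends only on $d$ and $\eta$.

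The main obstacle is the reverse distance comparison in the second step: bounding the size of $\mcal{Q}$ from above in terms of $\mcal{Y}$. Without $\eta<1$, the enlargement $\tilde{\mcal{Q}}$ could reach all the way to $\mcal{X}$ and meet tiny leaves. The argument goes through because the enlargement radius is a fixed fraction, strictly less than one, of the distance to $\mcal{X}$.
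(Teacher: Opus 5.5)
Your proof is correct and follows essentially the same route as the paper. You bound $\diam(\mathcal{Q})$ above and below by constant multiples of $\diam(\mathcal{Y})$, using admissibility of the leaves, non-admissibility of their dyadic parents, and the rearrangement that needs $\tfrac12(1+\eta)<1$; you then place every such $\mathcal{Q}$ inside an $O(\diam\mathcal{Y})$-neighbourhood of $\mathcal{Y}$ and count by volume packing over leaves with disjoint interiors. The differences are cosmetic and only change the constants: you first isolate the Whitney-type two-sided estimate for each leaf and pack into a ball, whereas the paper gets containment by contradiction with an explicit padding $\delta$ and packs into a cube.
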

\begin{proof}
Fix an admissible leaf $\mcal{X} \times \mcal{Y} \in P_\text{adm}$, and consider any admissible leaf $ \mcal{X} \times \mcal{Q} \in P_\text{adm}$ satisfying $\mcal{Y} \cap \tilde{\mcal{Q}} \neq \emptyset$. 
We first derive the overlap bound assuming that 
\begin{equation} 
\label{diamQreltodiamYmod}
    K_\text{lower} \diam(\mcal{Y}) 
        \leq \diam(\mcal{Q}) 
        \leq K_\text{upper} \diam(\mcal{Y})
\end{equation}
where $K_\text{lower}$ and $K_\text{upper}$ are positive constants depending only on $\eta$. 
We establish these bounds at the end of the proof. 
Set a padding factor
\[
\delta:= \left( \left( \frac{1+\eta}{1- \eta}(\frac{2}{\eta}+2)\right) + K_\text{upper} + 1\right)\diam(\mcal{Y}),
\]
and define the padded domain
\[
    \mcal{Y}_\delta := \{z \in \R^d: \dist(z,\mcal{Y}) \leq \delta\}.
\]
We claim that $\mcal{Q} \subset \mcal{Y}_\delta$, as visualized in Figure~\ref{fig:packing}.
\begin{figure}[htbp]
    \centering
    \includegraphics[width=0.3\linewidth]{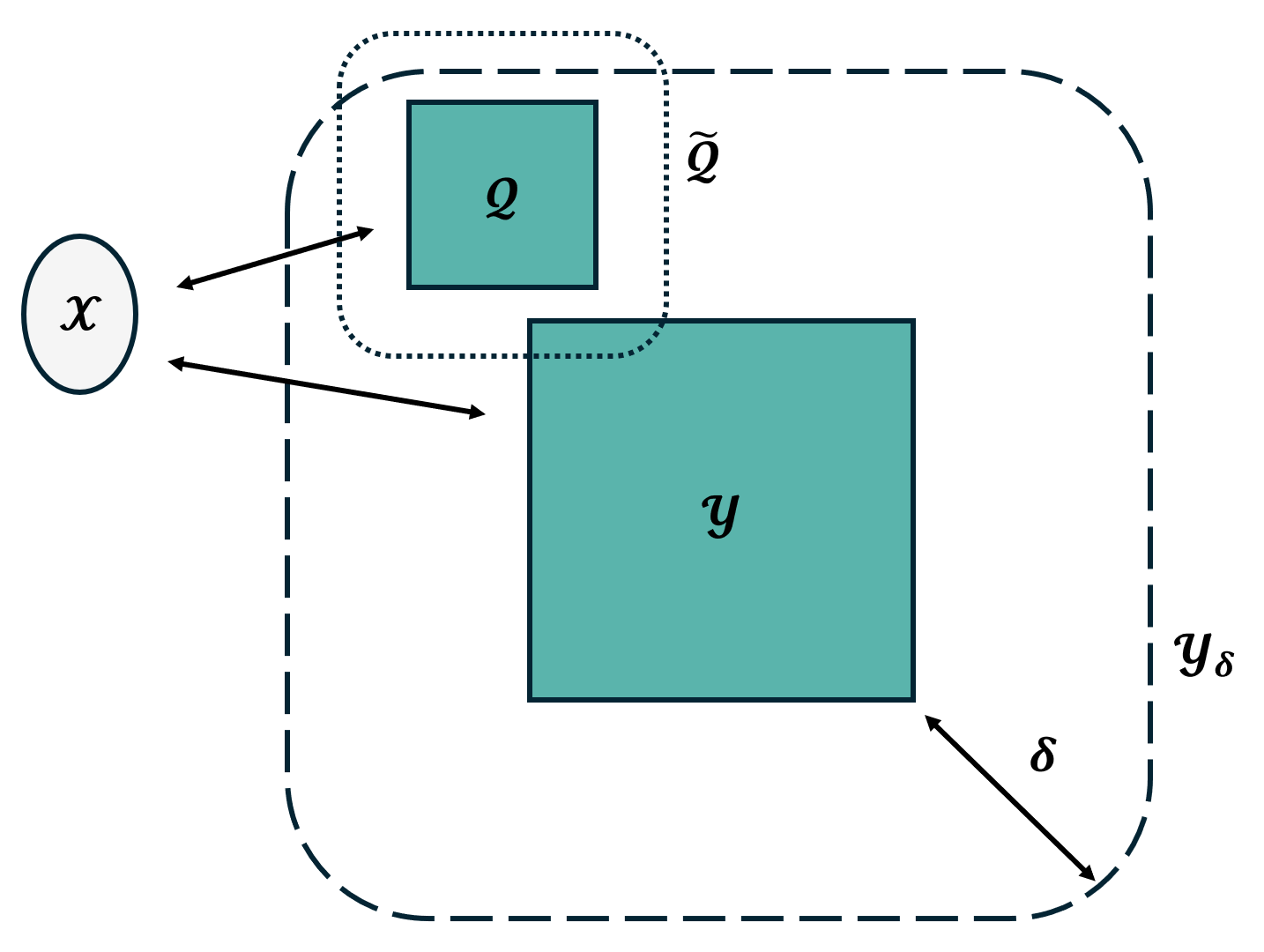}
    \caption{Both $\mcal{Q}$ and $\mcal{Y}$ are admissible cubes with respect to a fixed subdomain $\mcal{X}$ such that $\tilde{\mcal{Q}} \cap \mcal{Y} \neq \emptyset$. 
    A padding factor $\delta$ is found such that $\mcal{Q} \subseteq \mcal{Y}_\delta$ is guaranteed.}
\label{fig:packing}
\end{figure}
Suppose, for contradiction, that $\mcal{Q} \not\subset \mcal{Y}_\delta$. 
The intersection condition $\tilde{\mcal{Q}} \cap \mcal{Y} \neq \emptyset$ and the definition of $\tilde{\mcal{Q}}$ imply that 
$\dist(\mcal{Q}, \mcal{Y}) \leq \frac{1}{2}(1 + \eta) \dist(\mcal{X},\mcal{Q}).$ 
Combining this with $\dist(\mcal{X}, \mcal{Q}) \leq \dist(\mcal{X},\mcal{Y}) + \diam(\mcal{Y}) +  \dist(\mcal{Y},\mcal{Q})$, we obtain 
\[
    \left( 1 - \frac{1}{2}(1 + \eta) \right) \dist(\mcal{X}, \mcal{Q}) 
        \leq \dist(\mcal{X}, \mcal{Y}) + \diam(\mcal{Y}).
\]   
Since $\eta < 1$, the term $1 - \frac{1}{2}(1 + \eta)$ is positive. 
Moreover, because $\mcal{Y}$ is a dyadic child of its non-admissible parent $\mcal{Y}^P$, every point of $\mcal{Y}^P$ lies within $\text{diam}(\mcal{Y})$ of $\mcal{Y}$.
Hence,
$\dist(\mcal{X}, \mcal{Y}) \leq \dist(\mcal{X}, \mcal{Y}^P) + \text{diam}(\mcal{Y})$. 
Together, these estimates yield
\begin{align}
    \dist(\mcal{Y},\mcal{Q}) 
        & \leq \frac{1}{2}(1+\eta)\dist(\mcal{X}, \mcal{Q}) \nonumber \\
        & \leq \frac{\frac{1}{2}(1+\eta) }{1 - \frac{1}{2}(1 + \eta)} \left( \dist(\mcal{X}, \mcal{Y}) + \diam(\mcal{Y}) \right) 
        \label{eq:helpfulbddistYQ} \\
        & \leq  \frac{1+\eta}{1 - \eta} \left(\dist(\mcal{X},\mcal{Y}^P) + 2\diam(\mcal{Y})\right), \nonumber 
\intertext{
and by non-admissibility of the parent $\mcal{Y}^P$
}
    \dist(\mcal{Y},\mcal{Q}) 
        & <  \frac{1 + \eta}{1 - \eta}\left(\frac{1}{\eta} \diam(\mcal{Y}^P) + 2\diam(\mcal{Y})\right) \nonumber \\
        & = \frac{(1+\eta) (\frac{2}{\eta} + 2)}{1 - \eta}  \diam(\mcal{Y}). \nonumber
\end{align}
On the other hand, $\mcal{Q} \not \subset \mcal{Y}_\delta$ implies that some point of $\mcal{Q}$ has distance greater than $\delta$ from $\mcal{Y}$, and therefore $\delta \leq \dist(\mcal{Q}, \mcal{Y}) + \diam(\mcal{Q})$. 
However, this conclusion would imply by the definition of $\delta$ that
\begin{align} 
    \frac{(1+\eta)(\frac{2}{\eta} + 2)}{1 -\eta} \diam(\mcal{Y}) 
        & \geq \dist(\mcal{Q}, \mcal{Y}) \nonumber \\
        & \geq \delta - K_\text{upper} \diam(\mcal{Y}) \nonumber \\
        & = \left( \frac{(1+\eta)(\frac{2}{\eta} + 2)}{1 - \eta} + 1\right)\diam(\mcal{Y}) \nonumber 
\end{align} 
which is a contradiction. 
Therefore, any admissible cube $\mcal{Q}$ such that $\mcal{X} \times \tilde{\mcal{Q}} \cap \mcal{X} \times \mcal{Y} \neq \emptyset$ lies in  $\mcal{Y}_\delta$. 

The padded domain $\mcal{Y}_\delta$ is a rounded cube contained in the cube concentric with $\mcal{Y}$ having sidelength $\mathrm{len}(\mcal{Y}) + 2 \delta$.  
Denote this cube $\mcal{Y}_{\delta,\text{cube}}$, so $\mcal{Y}_\delta \subset \mcal{Y}_{\delta,\text{cube}}$. 
Moreover, the lower bound in Equation~\ref{diamQreltodiamYmod} yields
\[
\Vol{(\mcal{Q})} = \left(\frac{\diam(\mcal{Q})}{\sqrt{d}}\right)^d \geq \left(\frac{K_\text{lower}\diam(\mcal{Y})}{\sqrt{d}}\right)^d.
\]
Since distinct leaf cubes have disjoint interiors, a volume comparison gives
\begin{align} 
    |\{ \mcal{X} \times \mcal{Q} \in P_\text{adm}: \mcal{Y} \cap \tilde{\mcal{Q}} \neq \emptyset \}| 
        &  \leq \frac{\mathrm{Vol}(\mcal{Y}_{\delta,\text{cube}})}{\left( \frac{K_\text{lower}\diam(\mcal{Y})}{\sqrt{d}} \right)^d} 
        \leq \frac{\left(\frac{\diam(\mcal{Y})}{\sqrt{d}} + 2 \delta \right)^d}{\left( \frac{K_\text{lower}\diam(\mcal{Y})}{\sqrt{d}} \right)^d} \nonumber \\
        & \leq \frac{\left(\frac{\left[1 + 2 \sqrt{d}\left( \frac{(1+\eta)(\frac{2}{\eta} + 2)}{1 - \eta} + 1 + K_\text{upper} \right)\right]\diam(\mcal{Y})}{\sqrt{d}}\right)^d}{\left(\frac{K_\text{lower}\diam(\mcal{Y})}{\sqrt{d}}  \right)^d} \nonumber \\
        & \leq \left(\frac{\left[( 1 + 2 \sqrt{d}\left( \frac{(1+\eta)(\frac{2}{\eta} + 2)}{1 - \eta} + 1 + K_\text{upper} \right)\right]}{K_\text{lower}}\right)^d \nonumber \\
        & =: K_{W,\text{adm}}. \nonumber 
\end{align} 

It remains to establish the lower and upper bounds in Equation~\ref{diamQreltodiamYmod}. 
For the lower bound, every point of the non-admissible parent $\mcal{Q}^P$ lies within $\text{diam}(\mcal{Q})$ of $\mcal{Q}$, and therefore  
\begin{align}
     \diam(\mcal{Q}) 
        & \geq \dist(\mcal{X}, \mcal{Q}) - \dist(\mcal{X}, \mcal{Q}^P) \nonumber \\ 
        & \geq \dist(\mcal{X}, \mcal{Q}) - \frac{1}{\eta} \diam(\mcal{Q}^P) 
            = \dist(\mcal{X}, \mcal{Q}) - \frac{2}{\eta} \diam(\mcal{Q}). \nonumber
\end{align}
Together with the intersection condition $\tilde{\mcal{Q}} \cap \mcal{Y} \neq \emptyset$, this implies
\begin{align}
    \dist(\mcal{X}, \mcal{Y}) 
        &\leq \dist(\mcal{X}, \mcal{Q}) + \diam(\mcal{Q}) + \dist(\mcal{Q}, \mcal{Y}) \nonumber \\
        & \leq \left( 1 + \frac{1}{2}(1+\eta) \right) \dist(\mcal{X}, \mcal{Q}) + \diam(\mcal{Q}) \nonumber \\
        & \leq \left[ \left( 1 + \frac{1+\eta}{2}\right) \left( \frac{2}{\eta} + 1 \right) + 1\right]\diam(\mcal{Q}) \nonumber
\end{align}
By admissibility of $\mcal{Y}$, 
\[ \diam(\mcal{Y}) \leq \eta \dist(\mcal{X}, \mcal{Y}), \]
and so
\[
\diam(\mcal{Q}) 
    \geq \frac{2}{(\eta + 6)(\eta+1)}\diam(\mcal{Y}).
\] 
Thus, $K_\text{lower} = \frac{2}{(\eta + 6)(\eta+1)}$.

For the upper bound, admissibility of $\mcal{Q}$, the earlier estimate from Equation \ref{eq:helpfulbddistYQ} and the non-admissibility of the parent $\mcal{Y}^P$ give 
\begin{align} 
    \diam(\mcal{Q}) 
        & \leq \eta \dist(\mcal{X}, \mcal{Q}) \nonumber \\
        & \leq \eta \left[ \dist(\mcal{X}, \mcal{Y}) + \diam(\mcal{Y}) + \dist(\mcal{Y}, \mcal{Q})\right]. 
        \nonumber  \\
        & \leq  \eta \left( 1 + \frac{\frac{1}{2}(1+\eta)}{1 - \frac{1}{2}(1+\eta)} \right) \left[ \dist(\mcal{X}, \mcal{Y}) + \diam(\mcal{Y})\right] 
        \nonumber \\
    & \leq \eta \left( 1 + \frac{\frac{1}{2}(1+\eta)}{1 - \frac{1}{2}(1+\eta)} \right) \left[ \dist(\mcal{X}, \mcal{Y}^P) + \diam(\mcal{Y}) + \diam(\mcal{Y}) \right] \nonumber \\
     & < \eta \left(\frac{\frac{2}{\eta} + 2}{1 - \frac{1}{2}(1+\eta)} \right)  \diam(\mcal{Y}). \nonumber
\end{align}
Thus, $K_\text{upper} = \frac{4 + 4 \eta}{1-\eta}$. 
Both bounds therefore hold with constants depending only on $\eta$, completing the proof. 
\end{proof}

\subsubsection{Overlap Bound for Terminal Non-admissible Cubes}

We next establish the analogous bound when the fixed cube is terminal non-admissible. 
The geometry near the diagonal is different, so we treat this case separately.

\begin{lemma}
\label{CZnon} 
Let $0 < \eta < 1$, $h \in \mcal{H}$, let $\mcal{X} \subset \Omega$, and let $(P_\text{adm}, P_\text{non})$ be the output of Algorithm~\ref{alghierarchicaldecomp} applied to $(\mcal{X}, [0,1]^d, \eta, h)$.
Then there exists a constant $K_{W,\text{non}}$ depending only on $d$ and $\eta$ such that for every terminal non-admissible leaf $\mcal{X} \times \mcal{N} \in P_\text{non}$,
\[
    |\{ \mcal{X} \times \mcal{Q} \in P_\text{adm}: \mcal{N} \cap \tilde{\mcal{Q}} \neq \emptyset \}| \leq K_{W,\text{non}}.
\] 
\end{lemma}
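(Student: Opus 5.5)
We follow the volume-packing template of Lemma~\ref{CZadm}, with the fixed admissible leaf $\mcal{Y}$ replaced by the terminal leaf $\mcal{N}$. Here the relevant length scale is $h$, or equivalently $\diam(\mcal{N}) = \sqrt{d}2^{-n_h}$, which lies in $(h/2, h]$ by Lemma~\ref{lemma:terminallevel}. We need two facts about any admissible leaf $\mcal{X}\times\mcal{Q}$ with $\mcal{N}\cap\tilde{\mcal{Q}}\neq\emptyset$: that $\mcal{Q}$ lies in a padded neighborhood $\mcal{N}_\delta$ with $\delta = O_{\eta}(\diam(\mcal{N}))$, and that $\diam(\mcal{Q})$ is bounded below by a constant multiple of $\diam(\mcal{N})$. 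Once both hold, disjointness of leaf interiors and the same volume comparison as in Lemma~\ref{CZadm} bound the count by $\bigl((1+2\sqrt{d}\,\delta/\diam(\mcal{N}))/K_{\rm lower}\bigr)^d$, a constant depending only on $d,\eta$.

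The lower bound on $\diam(\mcal{Q})$ is automatic. Every admissible leaf is produced at some level $L\le n_h$, so it is a dyadic cube of level at most $n_h$. Hence $\diam(\mcal{Q})\ge \sqrt{d}2^{-n_h}=\diam(\mcal{N})$, and we may take $K_{\rm lower}=1$.

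The upper bounds on $\dist(\mcal{X},\mcal{Q})$ and $\diam(\mcal{Q})$ proceed in four steps.
\begin{enumerate}[(i)]
\item The intersection condition gives $\dist(\mcal{Q},\mcal{N})\le \tfrac12(1+\eta)\dist(\mcal{X},\mcal{Q})$.
\item The triangle inequality gives $\dist(\mcal{X},\mcal{Q})\le \dist(\mcal{X},\mcal{N})+\diam(\mcal{N})+\dist(\mcal{N},\mcal{Q})$.
\item Since $\eta<1$, absorbing the last term yields $\tfrac{1-\eta}{2}\dist(\mcal{X},\mcal{Q})\le \dist(\mcal{X},\mcal{N})+\diam(\mcal{N})$.
\item Non-admissibility of $\mcal{N}$ gives $\dist(\mcal{X},\mcal{N})<\diam(\mcal{N})/\eta$.
\end{enumerate}
Together these give $\dist(\mcal{X},\mcal{Q})\le \frac{2(1+1/\eta)}{1-\eta}\diam(\mcal{N})$. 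Admissibility of $\mcal{Q}$ then gives $\diam(\mcal{Q})\le \eta\dist(\mcal{X},\mcal{Q})\le \frac{2(1+\eta)}{1-\eta}\diam(\mcal{N})=:K_{\rm upper}\diam(\mcal{N})$. Also $\dist(\mcal{Q},\mcal{N})\le \frac{(1+\eta)(1+1/\eta)}{1-\eta}\diam(\mcal{N})$.

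With these bounds we take $\delta:=\bigl(\frac{(1+\eta)(1+1/\eta)}{1-\eta}+K_{\rm upper}+1\bigr)\diam(\mcal{N})$. The contradiction argument of Lemma~\ref{CZadm} then shows $\mcal{Q}\subset\mcal{N}_\delta$, which is contained in the concentric cube of side $\diam(\mcal{N})/\sqrt{d}+2\delta$. Dividing volumes gives the explicit constant $K_{W,\rm non}$.

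The only real obstacle is the lower bound on $\diam(\mcal{Q})$. In Lemma~\ref{CZadm} it required the parent-nonadmissibility argument, but here it is supplied directly by the mesh-scale termination of the recursion. We need to state carefully that no admissible leaf is finer than level $n_h$, which holds because recursion stops once the diameter is at most $h$. Degenerate cases such as $\dist(\mcal{X},\mcal{N})=0$ cause no trouble, since only upper bounds on that distance are used.
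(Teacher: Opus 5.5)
Your proposal is correct and follows the paper's proof essentially step for step: the same $K_{\rm lower}=1$ from mesh-scale termination of the recursion, the same $K_{\rm upper}=\frac{2(1+\eta)}{1-\eta}$ from admissibility of $\mcal{Q}$, the same padding $\delta$, and the same volume comparison against the concentric cube. The only cosmetic difference is that you first absorb to bound $\dist(\mcal{X},\mcal{Q})$ and then derive the bounds on $\dist(\mcal{N},\mcal{Q})$ and $\diam(\mcal{Q})$ from it, whereas the paper absorbs directly in the inequality for $\dist(\mcal{N},\mcal{Q})$; both routes give identical constants.
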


\begin{proof}
Fix $\mcal{X} \times \mcal{N} \in P_\text{non}$, and consider any admissible leaf $\mcal{X} \times \mcal{Q} \in P_\text{adm}$ satisfying $\mcal{N} \cap \tilde{\mcal{Q}} \neq \emptyset$. 
We first derive the overlap bound assuming that 
\begin{equation} 
\label{diamQreltodiamNmod}
    K_\text{lower} \diam(\mcal{N}) 
        \leq \diam(\mcal{Q}) 
        \leq K_\text{upper} \diam(\mcal{N}),
\end{equation}
where $K_\text{lower}$ and $K_\text{upper}$ are positive constants depending only on $\eta$. 
We establish these bounds at the end of the proof. 
Set
\[
    \delta :=  \frac{(1+\eta)(\frac{1}{\eta} + 1)}{1-\eta} \diam(\mcal{N}) + K_\text{upper} \diam(\mcal{N}) +  \diam(\mcal{N}),
\] and define the padded domain 
\[
    \mcal{N}_\delta := \{z \in \R^d: \dist{(z,\mcal{N})} \leq \delta\}.
\]
We claim that $\mcal{Q} \subset \mcal{N}_\delta$, as visualized in Figure~\ref{fig:packingnon}.
\begin{figure}[htbp]
    \centering
    \includegraphics[width=0.3\linewidth]{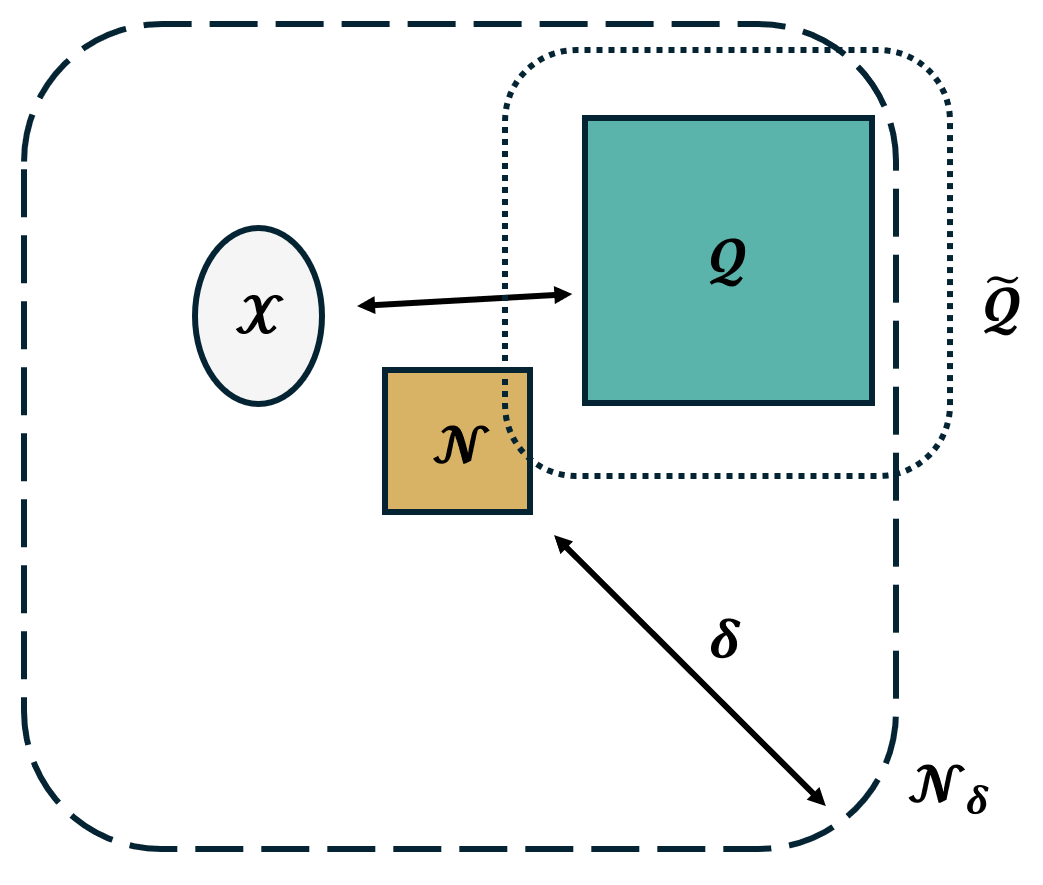}
    \caption{The cube $\mcal{Q}$ is admissible and the cube $\mcal{N}$ is not $\eta$-admissible with respect to a fixed subdomain $\mcal{X}$ such that $\tilde{\mcal{Q}} \cap \mcal{N} \neq \emptyset$.
    A padding factor $\delta$ is found such that $\mcal{Q} \subseteq \mcal{N}_\delta$ is guaranteed.}
    \label{fig:packingnon}
\end{figure}
Suppose, for contradiction, that $\mcal{Q} \not\subset \mcal{N}_\delta$. 
The intersection condition $\tilde{\mcal{Q}} \cap \mcal{N} \neq \emptyset$ and the definition of $\tilde{\mcal{Q}}$ imply that
\begin{align} 
    \dist(\mcal{Q}, \mcal{N}) 
        & \leq \frac{1}{2}(1 + \eta) \dist(\mcal{X},\mcal{Q}) 
        \nonumber \\
        & \leq \frac{1}{2}(1 + \eta) \left[ \dist(\mcal{X}, \mcal{N}) + \diam(\mcal{N}) + \dist(\mcal{N}, \mcal{Q}) \right] \nonumber \\
        & \leq \frac{1}{2}(1 + \eta) \left[ \left(\frac{1}{\eta}+1\right) \diam(\mcal{N}) + \dist(\mcal{N}, \mcal{Q}) \right], 
        \nonumber
\end{align}
where the last inequality follows from the non-admissibility of $\mcal{N}$.
Since $\eta < 1$, rearranging gives
\begin{align} 
    \dist(\mcal{N},\mcal{Q}) 
        & \leq \frac{\frac{1}{2}(1+\eta)\left( \frac{1}{\eta} +1 \right)}{1 - \frac{1}{2}(1 + \eta)} \diam(\mcal{N}) 
        \nonumber \\
        & = \frac{(1 + \eta) \left( \frac{1}{\eta} + 1\right)}{1 - \eta} \diam(\mcal{N}). 
        \label{eqn:distNQ}
\end{align}
On the other hand, $\mcal{Q} \not \subset \mcal{N}_\delta$ implies that some point of $\mcal{Q}$ has distance greater than $\delta$ from $\mcal{N}$, and therefore $\delta \leq \dist(\mcal{Q}, \mcal{N}) + \diam(\mcal{Q})$. 
Combining this and the definition of the padding factor $\delta$ with the estimate from Equation~\ref{eqn:distNQ} yields
\begin{align}  
    \frac{(1 + \eta) \left( \frac{1}{\eta} + 1\right)}{1 - \eta} \diam(\mcal{N})
        & \geq \dist(\mcal{Q}, \mcal{N}) 
        \nonumber \\
        & \geq \delta - K_\text{upper} \diam(\mcal{N}) 
        \nonumber \\
        & = \left(  \frac{(1 + \eta) \left( \frac{1}{\eta} + 1\right)}{1 - \eta} + 1 \right)\diam(\mcal{N}) \nonumber 
\end{align}
which is a contradiction. 
Therefore, every admissible cube $\mcal{Q}$ satisfying $\tilde{\mcal{Q}} \cap \mcal{N} \neq \emptyset$ lies in $\mcal{N}_\delta$. 

The padded domain $\mcal{N}_\delta$ is contained in the cube concentric with $\mcal{N}$ having side length $\mathrm{len}(\mcal{N}) + 2 \delta$. 
Define this cube $\mcal{N}_{\delta,\text{cube}}$ so $\mcal{N}_\delta \subset \mcal{N}_{\delta,\text{cube}}$.
Moreover, the lower bound in Equation~\ref{diamQreltodiamNmod} yields
\[
    \Vol{(\mcal{Q})} = \left(\frac{\diam(\mcal{Q})}{\sqrt{d}}\right)^d \geq \left(\frac{K_\text{lower}\diam(\mcal{N})}{\sqrt{d}} \right)^d.
\]
Since distinct leaf cubes have disjoint interiors, a volume comparison gives 
\begin{align} 
    |\{ \mcal{X} \times \mcal{Q} \in P_\text{adm}: \mcal{N} \cap \tilde{\mcal{Q}} \neq \emptyset \}| 
        & \leq \frac{\mathrm{Vol}(\mcal{N}_{\delta,\text{cube}})}{\left(\frac{K_\text{lower}\diam(\mcal{N})}{\sqrt{d}} \right)^d}  \leq \frac{\left( \frac{\diam(\mcal{N})}{\sqrt{d}} + 2 \delta  \right)^d}{\left(\frac{K_\text{lower}\diam(\mcal{N})}{\sqrt{d}} \right)^d} \nonumber \\
        & \leq \frac{\left(\frac{\left[1 + 2 \sqrt{d}\left(  \frac{(1 + \eta) \left( \frac{1}{\eta} + 1\right)}{1 - \eta} + 1 + K_\text{upper} \right)\right]\diam(\mcal{N})}{\sqrt{d}}\right)^d}{\left(\frac{K_\text{lower}\diam(\mcal{N})}{\sqrt{d}}  \right)^d} 
        \nonumber \\
        & \leq \left(\frac{[( 1 + 2 \sqrt{d}\left(  \frac{(1 + \eta) \left( \frac{1}{\eta} + 1\right)}{1 - \eta} + 1 + K_\text{upper} \right)]}{K_\text{lower}}\right)^d \nonumber \\
        & =: K_{W,\text{non}}. \nonumber
\end{align}

It remains to establish the lower and upper bounds in Equation~\ref{diamQreltodiamNmod}. 
Since $\mcal{N}$ is terminal non-admissible, it lies at the finest level $n_h$, whereas every admissible leaf lies at some level $L \leq n_h$. Hence, $\diam(\mcal{Q}) \geq \diam(\mcal{N})$, so we may take $K_\text{lower} =1$. 
For the upper bound, admissibility of $\mcal{Q}$, non-admissibility of $\mcal{N}$ and Equation~\ref{eqn:distNQ} give
\begin{align} 
    \diam(\mcal{Q}) 
        & \leq \eta \dist(\mcal{X}, \mcal{Q}) \nonumber \\
        & \leq \eta \left( \dist(\mcal{X}, \mcal{N}) + \diam(\mcal{N}) + \dist(\mcal{N}, \mcal{Q}) \right) \nonumber \\
        & \leq \eta \left( \left( \frac{1}{\eta} + 1 \right) \diam(\mcal{N}) + \dist(\mcal{N}, \mcal{Q}) \right) \nonumber \\
        & \leq \left(1 + \eta\right) \diam(\mcal{N}) + \eta \left( \frac{(1+\eta) \left( \frac{1}{\eta} + 1 \right)}{1-\eta} \right) \diam(\mcal{N}) \nonumber \\
        & = \frac{2(1+\eta)}{1 - \eta} \diam(\mcal{N}). \nonumber
\end{align} 
Thus, $K_\text{upper} = \frac{2(1+\eta)}{1-\eta}$.   
\end{proof}

To summarize, for each subdomain $\mcal{X} \subset \Omega$ with $\mathrm{diam}(\mcal{X}) \leq h$, Algorithm~\ref{alghierarchicaldecomp} produces a hierarchical decomposition of the slab $\mcal{X} \times \Omega$ into admissible domains and terminal non-admissible domains. 
The number of terminal non-admissible domains is uniformly bounded by a constant $K_{d,\eta}$, while the number of admissible domains grows logarithmically in the inverse mesh size.
Moreover, the enlarged admissible domains used in the local approximation estimates have uniformly bounded overlap, quantified by constants $K_{W,\text{adm}}$ and $K_{W,\text{non}}$. 
These geometric estimates are the ingredients needed in the next section to construct and analyze the low-dimensional row approximation spaces. 

\subsection{Row Space Construction for \texorpdfstring{$\bld{A}_h$}{A\_h}}
\label{rowspaceconstructionsection}
This section constructs, for each row of $\bld{A}_h$, a low-dimensional subspace $\mcal{V}_i \subset \R^{|\mcal{T}_h|}$ that approximates the transpose of the row $((\bld{A}_h)_{i,:} = \theta_i^\star \mcal{A}J_h)^T$. 
The construction uses the hierarchical partition of the row domain $\mcal{X}_i \times \Omega$ developed in Section \ref{hierSAdecompsection}. 
The row subspace includes two types of basis vectors. On terminal non-admissible domains, we include the canonical coordinate vectors corresponding to the finite elements whose supports non-trivially intersect the non-admissible region. The number of such vectors is uniformly bounded, as proven below in Lemma~\ref{lemmanumofFEtocoverNA}. 
On admissible domains restricted to convex components of $\Omega$, we include the discrete coefficient vectors generated by a basis for the space of polynomials of total degree at most a parameter $m$. 
The local approximation result from Corollary~\ref{sepexpansionwithL2orthogonal} implies that choosing $m$ above a logarithmic threshold in the inverse error tolerance controls the local approximation error to that desired error tolerance. 
The explicit construction of an orthonormal basis of such a subspace is given in Algorithm~\ref{algconstructrowsubspace} with details of the approximation in Lemma~\ref{lemmarowsubspaceconstruction}.  
Algorithm~\ref{algconstructrowsubspace} takes in as inputs: the admissible cubes $\{\mcal{Y}_{i,j}\}_{j=1}^{N_\text{adm}(i)}$ and the non-admissible cubes $\{\mcal{N}_{i,p}\}_{p=1}^{N_\text{non}(i)}$ from a hierarchical partition of $\mcal{X}_i \times \Omega$ and a polynomial degree parameter $m$. The algorithm forms a matrix with the discrete coefficient vectors of local polynomial bases and the canonical coordinate vectors, and performs a $QR$ factorization to return an orthonormal basis of this subspace. 
\begin{algorithm}
\caption{Row Subspace Construction for $(\bld{A}_h)_{i,:}^T$} 
\label{algconstructrowsubspace}

\begin{algorithmic}[1]

\Procedure{Row-Subspace Construction}{$\{\mcal{Y}_{i,j}\}_{j=1}^{N_\text{adm}(i)}, \{\mcal{N}_{i,p}\}_{p=1}^{N_\text{non}(i)}, m$}

\State Let $J_h$ be the finite element mapping operator 

\State $\text{Let} \ I := \Big\{k^\prime: \mathrm{supp}(\theta_{k^\prime}) \cap \left( \bigcup_{p=1}^{N_\text{non}(i)} \mcal{N}_{i,p} \cap \Omega \right) \neq \emptyset \Big\}$ \color{gray} \algorithmiccomment{non-admissible region}

\color{black}
\For {$j = 1,2, ..., N_\text{adm}(i)$} \do

\color{gray} \algorithmiccomment{admissible cube $\mcal{Y}_{i,j}$}
\color{black}
\For {$n = 1,2, ..., N_\text{cc}$} \do

\color{gray} \algorithmiccomment{convex component $\mcal{C}_n$}
\color{black}

\If {$\mcal{Y}_{i,j} \cap \mcal{C}_n$ has positive measure} 

\State Form a polynomial basis $\{\phi^\alpha_{\mcal{Y}_{i,j} \cap \mcal{C}_n}\}_{|\alpha| \leq m}$ of $\mcal{P}_{m}^d(\mcal{Y}_{i,j} \cap \mcal{C}_n)$

\EndIf
\EndFor
\EndFor

\color{black}
\State Form a matrix $\bld{\tilde{V}}$ with columns

$\left\{J_h^* (\phi^\alpha_{\mcal{Y}_{i,j} \cap \mcal{C}_n} \mathds{1}_{\mcal{Y}_{i,j} \cap \mcal{C}_n})\right\}_{\substack{1\leq j \leq N_{\text{adm}}(i) \\ 1\leq n \leq N_{\text{cc}} \\ |\alpha| \leq m }} \cup \{ e_{k^\prime}\}_{k^\prime \in I}$ \color{gray} \algorithmiccomment{spanning vectors of $\mcal{V}_i$}

\color{black}

\State $\text{Compute} \ \bld{\tilde{V}} \bld{\Pi} = \bld{Q}\bld{R}$ \color{gray} \algorithmiccomment{rank-revealing $\bld{QR}$ with column pivoting}

\color{black}
\Return O.N. basis $\bld{V}_i := \bld{Q}(:,1:\text{rank}(\widetilde{\bld{V}}))$

\EndProcedure
\end{algorithmic}
\end{algorithm}

\begin{remark}
Observe that Algorithm~\ref{algconstructrowsubspace} can construct an orthonormal basis of this subspace using any $\binom{m+d}{d}$ polynomials that span $\mcal{P}_{m}^d(\mcal{Y}_{i,j} \cap \mcal{C}_n)$ and does not require constructing the $L^2(\mcal{Y}_{i,j} \cap \mcal{C}_n)$-orthogonal polynomials from Corollary~\ref{sepexpansionwithL2orthogonal} which could be computationally difficult depending on the domain. 
A favorable choice may be to use a basis of $L^2(\mcal{Y}_{i,j})$-orthogonal polynomials of the cube and then restrict them onto the convex component $\mcal{C}_n$ for ease of construction and better conditioning when orthonormalizing.
\end{remark}

Lemma~\ref{lemmarowsubspaceconstruction} establishes the two properties of the row subspaces needed to apply Algorithm~\ref{alg:amselgen}.
It shows that the orthogonal projection onto the subspace constructed by Algorithm~\ref{algconstructrowsubspace} approximates each row of $\bld{A}_h$ to the prescribed accuracy, while the dimension of the subspace remains uniformly controlled over each row.
We state the result first. 

\begin{lemma}
\label{lemmarowsubspaceconstruction}
Assume Assumptions 1 \ref{assumedomaincondition}, \ref{assumefemmesh}, \ref{assumerescaling} and 2 \ref{assumeglobalregularity}, \ref{assumeasymptotic smoothness}. 
Let $0< \eta < \text{min}\{\frac{1}{\gamma d}, 1\}$ and $h \in \mcal{H}$. 
Let constants $K_{W,\text{adm}}$ and $K_{W,\text{non}}$ be the enlargement overlap constants from Lemmas~\ref{CZadm} and \ref{CZnon}, respectively.
For $\varepsilon^{\prime} > 0$, let $m \in \mathbb{N}$ satisfy $m \geq m_{\varepsilon^\prime}:= \max \Bigg\{ 1, \Bigg\lceil \frac{\log{\left( \frac{C}{\varepsilon^\prime (1-\gamma \eta d)}\right)}}{-\log{(\gamma \eta d)}} \Bigg\rceil \Bigg\}$. 
For each $i \in \{1:|\mcal{T}_h|\}$,
let 
\[
    \left(\{\mcal{Y}_{i,j}\}_{j=1}^{N_\text{adm}(i)}, \{\mcal{N}_{i,p}\}_{p=1}^{N_\text{non}(i)}\right)
\]
be the output of Algorithm~\ref{alghierarchicaldecomp} applied to $(\mcal{X}_i, [0,1]^d, \eta, h)$.
Let $\bld{V}_i$ be the orthonormal basis matrix returned by Algorithm~\ref{algconstructrowsubspace} applied to 
\[
    \left(\{\mcal{Y}_{i,j}\}_{j=1}^{N_\text{adm}(i)}, \{\mcal{N}_{i,p}\}_{p=1}^{N_\text{non}(i)},m\right),
\]
and define
\[
    \mcal{V}_i := \mathrm{range}(\bld{V}_i), \quad \bld{P}_i := \bld{V}_i \bld{V}_i^T.
\]
Writing the transpose of the $i$-th row as $a_i^T = (\bld{A}_h)_{i,:} = \theta_i^\star \mcal{A} J_h$, the projection error satisfies
\[
    \| a_i - \bld{P}_i a_i\|_2 
        \leq \varepsilon^\prime \sqrt{N_{cc} \text{max}\{K_{W,\text{adm}}, K_{W,\text{non}}\}} \| \kappa\|_{L^2(\mcal{X}_i \times \Omega)}
\]
and the dimension of the subspace satisfies
\[
    \mathrm{dim}\mcal{V}_i \leq k = \mcal{O}\left(\log(h^{-1})m^d\right) + \mcal{O}(1),
\]
where $k$ is independent of $i$ and depends only on $d, N_{cc}, \eta, h, m, c_\text{quasi}$ and $ \sigma_\#$.
\end{lemma}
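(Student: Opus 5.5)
The plan is to bound $\|a_i-\bld{P}_i a_i\|_2$ by $\|a_i-\bld{v}\|_2$ for one explicit vector $\bld{v}\in\mcal{V}_i$, since $\bld{P}_i a_i$ is the best approximation in $\mcal{V}_i$. The first step is to write the row as a pulled-back function. Since $(a_i)_k=\int\!\int\kappa(x,y)\theta_i(x)\theta_k(y)\,dx\,dy$, we have $a_i=J_h^*g_i$ with $g_i(y):=\int_{\mcal{X}_i}\kappa(x,y)\theta_i(x)\,dx$. Fix $i$ and write $\mcal{Z}_{j,n}:=\mcal{Y}_{i,j}\cap\mcal{C}_n$ for the pieces of positive measure.

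On each piece we need a polynomial approximation of $g_i$. The first check is that Corollary~\ref{sepexpansionwithL2orthogonal} applies to $(\mcal{X}_i,\mcal{Z}_{j,n})$:
\begin{itemize}
\item $\mcal{Z}_{j,n}$ is convex, hence connected with Lipschitz boundary (Remark~\ref{rmkconvexlocallipschitz});
\item the pair is $\eta$-admissible, because $\diam(\mcal{Z}_{j,n})\le\diam(\mcal{Y}_{i,j})$ and $\dist(\mcal{X}_i,\mcal{Z}_{j,n})\ge\dist(\mcal{X}_i,\mcal{Y}_{i,j})$.
\end{itemize}
Let $\Pi_{j,n}$ be the $L^2(\mcal{Z}_{j,n})$-orthogonal projection onto $\mcal{P}^d_m(\mcal{Z}_{j,n})$. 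By linearity, $\Pi_{j,n}g_i=\int\theta_i(x)\,p^\star_m(x,\cdot)\,dx$. Minkowski's inequality, Cauchy--Schwarz with $\|\theta_i\|_{L^2}=1$, and the corollary then give
\[
\|g_i-\Pi_{j,n}g_i\|^2_{L^2(\mcal{Z}_{j,n})}\le \int_{\mcal{X}_i}\|\kappa(x,\cdot)-p^\star_m(x,\cdot)\|^2_{L^2(\mcal{Z}_{j,n})}\,dx\le \varepsilon'^2\int_{\mcal{X}_i}\|\kappa(x,\cdot)\|^2_{L^2(\widehat{\mcal{Z}_{j,n}})}\,dx .
\]

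Next I build the comparison vector. Let $f:=\sum_{j,n}(\Pi_{j,n}g_i)\mathds{1}_{\mcal{Z}_{j,n}}$. The vector $J_h^*f$ lies in the span of the polynomial columns of $\bld{\tilde V}$. Define $\bld{v}$ to equal $J_h^*f$ off the index set $I$ and to equal $a_i$ on $I$; this is legal because $e_{k'}\in\mcal{V}_i$ for $k'\in I$. Every cell $k\notin I$ meets only admissible pieces, up to measure zero, by the covering \eqref{hierdecompeqn}. Since $J_h^*$ is a contraction and the pieces are disjoint up to boundaries,
\[
\|a_i-\bld{v}\|_2^2=\|(J_h^*(g_i-f))_{k\notin I}\|^2\le\sum_{j,n}\|g_i-\Pi_{j,n}g_i\|^2_{L^2(\mcal{Z}_{j,n})}.
\]

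The main obstacle is summing the enlarged norms; I plan to handle it by overlap counting. Lemma~\ref{lemmainclusioninlargerdomain} gives $\widehat{\mcal{Z}_{j,n}}\subset\tilde{\mcal{Y}}_{i,j}\cap\Omega$, which removes the dependence on $n$. The sum over $n$ therefore costs a factor $N_{cc}$. For fixed $n$, I cover $\Omega$ by the leaves $\mcal{Y}_{i,l}$ and $\mcal{N}_{i,p}$ and write
\[
\sum_j\|\kappa(x,\cdot)\|^2_{L^2(\tilde{\mcal{Y}}_{i,j}\cap\Omega)}\le\sum_{\text{leaves }\mcal{L}}\|\kappa(x,\cdot)\|^2_{L^2(\mcal{L}\cap\Omega)}\cdot\#\{j:\mcal{L}\cap\tilde{\mcal{Y}}_{i,j}\neq\emptyset\}.
\]
Lemmas~\ref{CZadm} and \ref{CZnon} bound that count by $\max\{K_{W,\mathrm{adm}},K_{W,\mathrm{non}}\}$. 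The remaining sum over leaves is $\|\kappa(x,\cdot)\|^2_{L^2(\Omega)}$, since the leaves tile the domain. Integrating over $x\in\mcal{X}_i$ and taking square roots gives the stated bound with $\|\kappa\|_{L^2(\mcal{X}_i\times\Omega)}$.

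Finally, the dimension count. The polynomial columns number at most $|P_{\mathrm{adm}}|\,N_{cc}\binom{m+d}{d}\le 2^dK_{d,\eta}n_h N_{cc}\binom{m+d}{d}$ by Lemma~\ref{lemmaAdomains}, and $n_h=\mcal{O}(\log h^{-1})$. For $|I|$ I use a volume-packing argument. There are at most $K_{d,\eta}$ non-admissible leaves (Lemma~\ref{lemmaNAdomains}), each of diameter $\le h$. Every mesh cell meeting one of them has diameter $\le h$, so it lies inside a ball of radius $2h$. Each cell has volume $\ge K_{\mathrm{mesh}}h^d$ (Lemma~\ref{numberofmeshcells}) and the cells have disjoint interiors. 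Hence $|I|\le K_{d,\eta}\,\omega_d 2^d/K_{\mathrm{mesh}}=\mcal{O}(1)$, with the constant depending only on $d,\eta,c_{\mathrm{quasi}},\sigma_\#$. Both counts are independent of $i$, which yields $k$.
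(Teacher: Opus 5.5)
Your proposal is correct and takes essentially the same route as the paper: you bound $\|a_i-\bld{P}_i a_i\|_2$ by comparing $a_i$ with an explicit vector of $\mcal{V}_i$, apply Corollary~\ref{sepexpansionwithL2orthogonal} on the admissible convex pieces $\mcal{Y}_{i,j}\cap\mcal{C}_n$, remove the $n$-dependence with Lemma~\ref{lemmainclusioninlargerdomain}, sum using the overlap bounds of Lemmas~\ref{CZadm} and~\ref{CZnon}, and count dimensions with Lemma~\ref{lemmaAdomains}. The differences are cosmetic: you project the $x$-averaged function $g_i$ directly and set your vector equal to $a_i$ on $I$, where the paper integrates the pointwise remainders and notes that the non-admissible term lies in $\operatorname{span}\{e_{k'}\}_{k'\in I}$; and you bound $|I|\le K_{d,\eta}\,\omega_d 2^d/K_{\mathrm{mesh}}$ by packing per leaf instead of using the single ball of radius $(3+1/\eta)h$ from Lemma~\ref{lemmanumofFEtocoverNA}.
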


Before proving Lemma~\ref{lemmarowsubspaceconstruction}, we first prove an auxiliary lemma used in its proof to bound the number of canonical coordinate vectors contributed by the terminal non-admissible region. 

\begin{lemma}
\label{lemmanumofFEtocoverNA} 
Assume Assumptions 1\ref{assumefemmesh} and 1\ref{assumerescaling}. 
Let $h \in \mcal{H}$ and let $\mcal{X} \subset \Omega$ such that $\diam(\mcal{X}) \leq h$. 
Let $\eta > 0$.
Let $(P_\text{adm}, P_\text{non})$ be the output of Algorithm~\ref{alghierarchicaldecomp} applied to $(\mcal{X}, [0,1]^d, \eta, h)$. 
Define the terminal non-admissible region as
\[
    \mcal{R}_{\text{non}, \mcal{X}} 
        := \bigcup_{\mcal{X} \times \mcal{N} \in P_\text{non}} \mcal{N} \cap \Omega.
\] 
Let $N_\text{non,mesh}(\mcal{X})$ denote the number of mesh cells whose supports intersect the terminal non-admissible region, and $N_\text{non, FE}(\mcal{X})$ denote the number of finite elements corresponding to those mesh cells. 
Then, both numbers can be bounded by
\[
    N_\text{non,FE}(\mcal{X}) 
        = N_\text{non,mesh}(\mcal{X}) 
        \leq  \omega_d \frac{(3 + \frac{1}{\eta})^d}{K_\text{mesh} } =: K_\text{non,FE} 
\]
where we define the constant $K_\text{non,FE}$ as this upper bound.
\end{lemma}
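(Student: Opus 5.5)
The plan is a volume-packing argument, parallel to the one in Lemma~\ref{lemmaNAdomains}. We show that every mesh cell meeting $\mcal{R}_{\text{non},\mcal{X}}$ lies in a single ball of radius $(3+\frac{1}{\eta})h$. We then use the mesh-cell volume lower bound $|\mcal{X}'|\geq K_\text{mesh}h^d$ from the proof of Lemma~\ref{numberofmeshcells}, together with disjointness of cell interiors, to count them.

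First, the equality $N_\text{non,FE}(\mcal{X})=N_\text{non,mesh}(\mcal{X})$ is immediate. Each basis function $\theta_{k'}$ is supported on exactly one mesh cell $\mcal{X}_{k'}$, and distinct indices correspond to distinct cells. So it suffices to bound the number of mesh cells.

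Next comes the localization step. Every terminal leaf $\mcal{X}\times\mcal{N}\in P_\text{non}$ was returned by the second branch of Algorithm~\ref{alghierarchicaldecomp}. Hence $\diam(\mcal{N})\leq h$, and by non-admissibility $\eta\dist(\mcal{X},\mcal{N})<\diam(\mcal{N})\leq h$. So for any $z\in\mcal{N}$,
\[
\dist(z,\mcal{X})\leq \dist(\mcal{X},\mcal{N})+\diam(\mcal{N})<\left(1+\tfrac{1}{\eta}\right)h .
\]
Now let $\mcal{X}'\in\mcal{T}_h$ be a cell meeting $\mcal{R}_{\text{non},\mcal{X}}$ at some point $z$. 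Since $h_{\mcal{X}'}\leq h$, every $w\in\mcal{X}'$ satisfies $\dist(w,\mcal{X})<(2+\frac1\eta)h$. Fix any $x_0\in\mcal{X}$. Because $\diam(\mcal{X})\leq h$, we get $|w-x_0|< (3+\frac1\eta)h$. Thus all such cells lie in $B(x_0,(3+\frac1\eta)h)$.

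Finally, the counting step. These cells have pairwise-disjoint interiors, each has volume at least $K_\text{mesh}h^d$, and all lie in a ball of volume $\omega_d(3+\frac1\eta)^dh^d$. Therefore
\[
N_\text{non,mesh}(\mcal{X})\leq \frac{\omega_d(3+\frac1\eta)^d h^d}{K_\text{mesh}h^d}=K_\text{non,FE}.
\]
The only delicate point is the volume lower bound $|\mcal{X}'|\geq K_\text{mesh}h^d$. It follows from shape regularity and quasi-uniformity: the inscribed ball of $\mcal{X}'$ has diameter $\rho_{\mcal{X}'}\geq h_{\mcal{X}'}/\sigma_\#\geq c_\text{quasi}h/\sigma_\#$. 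This gives $K_\text{mesh}=\omega_d(c_\text{quasi}/(2\sigma_\#))^d$, as established in Lemma~\ref{numberofmeshcells}, and I would simply invoke that lemma. The resulting constant depends only on $d,\eta,\sigma_\#,c_\text{quasi}$, and in particular is independent of $h$ and $\mcal{X}$.
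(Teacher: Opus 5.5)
Your proof is correct and follows the paper's argument essentially step for step. Like the paper, you use $\diam(\mcal{N})\leq h$ and non-admissibility to place every mesh cell meeting $\mcal{R}_{\text{non},\mcal{X}}$ inside a single ball of radius $(3+\frac{1}{\eta})h$ around $\mcal{X}$, divide that ball's volume by the cell-volume lower bound $K_\text{mesh}h^d$ from Lemma~\ref{numberofmeshcells}, and identify $N_\text{non,FE}(\mcal{X})=N_\text{non,mesh}(\mcal{X})$ through the one-basis-function-per-cell structure.
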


\begin{proof}
Let $\mcal{C} \in \mcal{T}_h$ such that $\mcal{C}\cap \mcal{R}_{\text{non},\mcal{X}} \neq \emptyset$. 
Then, there is a point $z \in \mcal{C} \cap (\mcal{N} \cap \Omega)$ for some terminal non-admissible cube $\mcal{N}$. 
For any $y \in \mcal{C}$, 
\[
    \dist(y,\mcal{X}) 
        \leq |y-z| + \dist(z, \mcal{X}) 
        \leq h +  \dist(\mcal{N}, \mcal{X}) + \diam(\mcal{N}) 
        < \left( 1 + \frac{1}{\eta} \right) \diam(\mcal{N}) + h. 
\]
Since terminal non-admissible cubes satisfy $\diam(\mcal{N}) \leq h$, this gives $\dist(y,\mcal{X}) < \left( 2 + \frac{1}{\eta} \right)h$. 
Then since $\mcal{X}$ is contained in a ball of radius $h$, every such mesh cell lies in a fixed ball of radius  $(3 + \frac{1}{\eta})h$. 
The number of such mesh cells can therefore be bounded by the volume of this ball divided by the minimum mesh-cell volume:
\begin{align} 
    N_\text{non,mesh}(\mcal{X}) 
        & \leq \frac{\text{Vol}\left(B_{(3 + \frac{1}{\eta})h} \right)}{ \min\{\text{Vol}(\mcal{C}^\prime): \mcal{C}^\prime \in \mcal{T}_h\}} 
            \leq \frac{\omega_d \left( (3 + \frac{1}{\eta}) h\right)^d}{K_\text{mesh} h^d} 
            = \omega_d \frac{(3 + \frac{1}{\eta})^d}{K_\text{mesh} }\nonumber
\end{align}
recalling from the proof of Lemma~\ref{numberofmeshcells} that the volume of any mesh cell from $\mcal{T}_h$ is lower bounded by $K_\text{mesh} h^d$. 
The constant $K_\text{mesh}$ is explicitly provided in the proof of Lemma~\ref{numberofmeshcells}. 
Under Assumption~1\ref{assumefemmesh}, each finite element mesh cell has a single basis element so $N_\text{non, FE} (\mcal{X}) = N_\text{non, mesh} (\mcal{X})$. 
Thus the number of associated piecewise-constant basis functions is bounded by the same constant.
\end{proof}

The proof of Lemma~\ref{lemmarowsubspaceconstruction} now follows.

\begin{proof}[Lemma~\ref{lemmarowsubspaceconstruction}]
Let $\varepsilon^{\prime} > 0$ and choose $m \geq m_{\varepsilon^\prime}$. Let $i \in \{1:|\mcal{T}_h|\}$ and let $\mcal{X}_i$ denote the mesh cell supporting the finite element $\theta_i$. 
Algorithm~\ref{alghierarchicaldecomp}'s output $\left(\{\mcal{Y}_{i,j}\}_{j=1}^{N_\text{adm}(i)}, \{\mcal{N}_{i,p}\}_{p=1}^{N_\text{non}(i)}\right)$ yields a disjoint covering, up to boundaries,
\[
    \mcal{X}_i \times \Omega 
        = \bigcup_{j=1}^{N_\text{adm}(i)} \left[ \mcal{X}_i \times (\mcal{Y}_{i,j} \cap \Omega) \right] \cup  \bigcup_{p=1}^{N_\text{non}(i)} \left[\mcal{X}_i \times (\mcal{N}_{i,p} \cap \Omega)\right],
\]
as seen in Equation~\ref{hierdecompeqn}.
We first consider the admissible part of this decomposition.
For each admissible cube $\mcal{Y}_{i,j}$, the decomposition 
$
    \overline{\Omega} = \bigcup_{n=1}^{N_{cc}} \mcal{C}_n
$ from Assumption~1\ref{assumedomaincondition} 
gives
\[
    \mcal{Y}_{i,j} \cap \overline{\Omega} 
        = \bigcup_{n=1}^{N_\text{cc}} (\mcal{Y}_{i,j} \cap \mcal{C}_n)
\]
where the intersections are pairwise disjoint up to boundaries. In Algorithm~\ref{algconstructrowsubspace}, we omit intersections of measure zero. For every remaining intersection, $\operatorname{int}(\mcal{Y}_{i,j} \cap \mcal{C}_n)$ is a bounded convex domain with Lipschitz boundary by Remark \ref{rmkconvexlocallipschitz}. 
Moreover, restricting an admissible cube to a convex component preserves admissibility. 
Indeed, 
\[
    \eta \dist(\mcal{X}_i,\mcal{Y}_{i,j} \cap \mcal{C}_n) \geq \eta \dist(\mcal{X}_i,\mcal{Y}_{i,j}) \geq \diam(\mcal{Y}_{i,j}) \geq  \diam(\mcal{Y}_{i,j} \cap \mcal{C}_n).
\]
Hence, each pair $\mcal{X}_i \times (\mcal{Y}_{i,j} \cap \mcal{C}_n)$ is $\eta$-admissible.
Since $m \geq m_{\varepsilon^\prime}$, Corollary~\ref{sepexpansionwithL2orthogonal} applies on the interior of each such intersection.
Since the boundary of a bounded convex set with nonempty interior has Lebesgue measure zero, we identify the corresponding $L^2$ spaces and norms with those on $\mcal{Y}_{i,j} \cap \mcal{C}_n$.
Let 
\[
    \{\phi^\alpha_{\mcal{Y}_{i,j} \cap \mcal{C}_n}\}_{|\alpha| \leq m}
\] 
be an $L^2(\mcal{Y}_{i,j} \cap \mcal{C}_n)$-orthonormal basis of $\mcal{P}^d_m(\mcal{Y}_{i,j} \cap \mcal{C}_n)$. 
Then, for almost every $x \in \mcal{X}_i$ and $y \in \mcal{Y}_{i,j} \cap \mcal{C}_n$,
\[
    \kappa(x,y) 
        = \sum_{|\alpha| \leq m} h^{\alpha,n}_{i,j}(x) \phi^\alpha_{\mcal{Y}_{i,j} \cap \mcal{C}_n} (y)  + R^m_{\mcal{Y}_{i,j} \cap \mcal{C}_n}(x,y)
\] 
where $ h^{\alpha,n}_{i,j}(x) := \langle \kappa(x, \cdot), \phi^\alpha_{\mcal{Y}_{i,j} \cap \mcal{C}_n} \rangle_{L^2(\mcal{Y}_{i,j} \cap \mcal{C}_n)}$ and 
\[
    \| R^{m}_{\mcal{Y}_{i,j} \cap \mcal{C}_n}(x,\cdot)\|_{L^2(\mcal{Y}_{i,j} \cap \mcal{C}_n)} 
        \leq \varepsilon^\prime \| \kappa(x,\cdot)\|_{L^2(\widehat{\mcal{Y}_{i,j} \cap \mcal{C}_n})}.
\] 
Extend each local polynomial by zero outside $\mcal{Y}_{i,j} \cap \mcal{C}_n$, and define
\[v_{i,j}^{\alpha,n}(y) := \phi^\alpha_{\mcal{Y}_{i,j} \cap \mcal{C}_n} (y)\mathds{1}_{\mcal{Y}_{i,j} \cap \mcal{C}_n}(y).\]
Combining the local approximations over all admissible domains gives, for almost every $(x,y) \in \mcal{X}_i \times \Omega$,
\begin{align} 
    \kappa(x,y) 
        = & \sum_{j=1}^{N_\mathrm{adm}(i)} \sum_{n=1}^{N_\text{cc}} \left[ \sum_{|\alpha| \leq m} h^{\alpha,n}_{i,j}(x) v_{i,j}^{\alpha,n}(y) +  R^{m}_{\mcal{Y}_{i,j} \cap \mcal{C}_n}(x,y) \mathds{1}_{\mcal{Y}_{i,j} \cap \mcal{C}_n}(y) \right] \nonumber \\
        & \qquad + \sum_{p =1}^{N_\mathrm{non}(i)}  \kappa(x,y) \mathds{1}_{\mcal{N}_{i,p}\cap \Omega}(y).
\label{kernelexpression}
\end{align}
We now use this decomposition of the kernel on $\mcal{X}_i \times \Omega$ to obtain a corresponding decomposition of the $i$-th row of $\bld{A}_h$. 
For $k^\prime \in \{1, \ldots, |\mcal{T}_h|\}$, 
\[
    (\bld{A}_h)_{i,k^\prime} 
        = \int_{\mcal{X}_i}  \int_\Omega \kappa(x,y) (J_h \bld{e}_{k^\prime})(y) dy \theta_{i}(x) dx.
\]
Substituting in the kernel representation from Equation~\ref{kernelexpression} and using
\[
    (J_h^\star v)_{k^\prime} = \int_\Omega v(y) (J_h e_{k^\prime})(y)dy
\]
gives that the transpose of the $i$-th row can be represented as
\begin{align}
    a_i = (\bld{A}_h)_{i,:}^T
        &= \sum_{j=1}^{N_\mathrm{adm}(i)} \sum_{n=1}^{N_{cc}} \sum_{|\alpha| \leq m} \langle h^{\alpha,n}_{i,j}, \theta_{i}\rangle_{L^2(\mcal{X}_i)} J_h^* v^{\alpha,n}_{i,j} \nonumber \\
        & \qquad + J_h^* \left( \sum_{j=1}^{N_\mathrm{adm}(i)} \sum_{n=1}^{N_{cc}}  \int_{\mcal{X}_i} R^{m}_{\mcal{Y}_{i,j} \cap \mcal{C}_n} (x,y) \mathds{1}_{\mcal{Y}_{i,j} \cap \mcal{C}_n}(y)\theta_{i}(x) dx \right) \nonumber \\
        & \qquad + J_h^* \left( \int_{\mcal{X}_i} \kappa(x,y) \mathds{1}_{ \bigcup_{p=1}^{N_\mathrm{non}(i)}  \mcal{N}_{i,p}\cap \Omega}(y)  \theta_{i}(x) dx  \right). \nonumber 
\end{align}
Motivated by this decomposition, define the set of discrete coefficient vectors generated by the polynomial basis functions on each admissible domain, together with the canonical coordinate vectors for the finite elements that intersect the terminal non-admissible region, by
\begin{equation} 
    \mcal{S}_i := \big\{J_h^*(v^{\alpha,n}_{i,j}) \big\}_{j,\alpha,n}  \bigcup  \Bigg\{e_{k^\prime}: \text{supp}(\theta_{k^\prime}) \cap \left(\bigcup_{p=1}^{N_\text{non}(i)}   \mcal{N}_{i,p}\cap \Omega \right) 
        \neq \emptyset \Bigg\},
\label{eqnrowsubspaceset}
\end{equation}
and let 
\[
    \mcal{V}_i := \operatorname{span}(\mcal{S}_i).
\]
By construction, the first term in the decomposition of $a_i$ lies in $\mcal{V}_i$ by definition. 
Moreover, the $k^\prime$-th entry of the third term vanishes whenever 
\[
    \mathrm{supp}(\theta_{k^\prime}) \cap \left( \bigcup_{p=1}^{N_\text{non}(i)} (\mcal{N}_{i,p} \cap \Omega \right) = \emptyset,
\]
so the third term also lies in $\mcal{V}_i$. 

Algorithm~\ref{algconstructrowsubspace} constructs an orthonormal basis matrix $\bld{V}_i$ for $\mcal{V}_i$, so that $\text{range}(\bld{V}_i) = \mcal{V}_i$.
Let $N_\text{non,FE}(i)$ denote the number of canonical coordinate vectors included in $\mcal{S}_i$ from Equation~\ref{eqnrowsubspaceset}. 
Lemma~\ref{lemmanumofFEtocoverNA} gives
\[
    N_\text{non,FE}(i) 
        = \bigg|  \bigg\{e_{k^\prime}: \mathrm{supp}(\theta_{k^\prime}) \cap \left(\bigcup_{p=1}^{N_\mathrm{non}(i)}   \mcal{N}_{i,p}\cap \Omega \right) 
        \neq \emptyset \bigg\} \bigg| 
        \leq K_\text{non,FE}.
\]
Since each admissible cube contributes at most $N_{cc}$ polynomial spaces, each of dimension $\binom{m+d}{d}$,
\[
    \dim \mcal{V}_i  \leq |\mcal{S}_i| \leq  \left( \sum_{j=1}^{N_\text{adm}(i)} \sum_{n=1}^{N_{cc}} \binom{m+d}{d} \right) + N_\text{non,FE}(i).
\]
Using the bound on $N_\text{adm}(i)$, we obtain 
\[
    \dim \mcal{V}_i 
        \leq N_{cc} \cdot 2^d K_{d,\eta} \big\lceil \log_2(\sqrt{d}h^{-1}) \big \rceil \cdot \binom{m+d}{d} + K_\text{non,FE} =:k. 
\]

We now bound the approximation error of the $i$-th row. 
Let $\bld{P}_i = \bld{V}_i \bld{V}_i^T$ denote the orthogonal projector onto the subspace $\mcal{V}_i$. 
By construction, only the polynomial remainders on the admissible domains contribute to the projection error. 
Define the total admissible remainder by 
\[
    R_i(x,y)
        := \sum_{j=1}^{N_\mathrm{adm}(i)} \sum_{n=1}^{N_{cc}} R^{m}_{\mcal{Y}_{i,j} \cap \mcal{C}_n} (x,y) \mathds{1}_{\mcal{Y}_{i,j} \cap \mcal{C}_n}(y).
\] 
Since the sets $\mcal{Y}_{i,j} \cap \mcal{C}_n$ are pairwise disjoint up to measure-zero boundaries,
\[
    \| R_i\|^2_{L^2(\mcal{X}_i \times \Omega)} 
        = \sum_{j=1}^{N_\mathrm{adm}(i)} \sum_{n=1}^{N_{cc}} \| R^{m}_{\mcal{Y}_{i,j} \cap \mcal{C}_n}\|^2_{L^2(\mcal{X}_i \times \mcal{Y}_{i,j} \cap \mcal{C}_n)}.
\]
For almost every $y \in \Omega$, define 
\[
    g_i(y) 
        :=  \int_{\mcal{X}_i} R_i(x,y) \theta_{i}(x) dx.
\] 
By Cauchy-Schwarz and the normalization $\|\theta_i\|_{L^2(\mcal{X}_i)} = 1$, 
\[
    \| g_i\|_{L^2(\Omega)} \leq \| R_i\|_{L^2(\mcal{X}_i \times \Omega)}.
\]
Since the remaining first and third terms in $a_i$ lie in $\mcal{V}_i$, the best-approximation property of the orthogonal projector gives
\begin{align} 
    \| a_i - \bld{P}_i a_i\|_2^2 
        & \leq \|   J_h^* g_i\|_2^2  
        \leq \| g_i\|_{L^2(\Omega)}^2 \nonumber \\
        & \leq   \sum_{j=1}^{N_\mathrm{adm}(i)} \sum_{n=1}^{N_{cc}} \| R^{m}_{\mcal{Y}_{i,j} \cap \mcal{C}_n}\|^2_{L^2(\mcal{X}_i \times (\mcal{Y}_{i,j} \cap \mcal{C}_n))}  \nonumber \\ 
        & \leq \varepsilon^{\prime^2} \sum_{j=1}^{N_\text{adm}(i)} \sum_{n=1}^{N_\text{cc}} \| \kappa\|^2_{L^2(\mcal{X}_i \times (\widehat{\mcal{Y}_{i,j} \cap \mcal{C}_n}))}  \nonumber 
\end{align}
where the final inequality follows from Corollary~\ref{sepexpansionwithL2orthogonal}. 
To bound the sum of the enlarged-domain norms, it is convenient to replace the component-dependent enlargements by enlargements of the original admissible cubes. 
For an admissible cube $\mcal{Y}_{i,j}$, define
\[
    \tilde{\mcal{Y}}_{i,j}
        := \{z^\prime \in [0,1]^d: \dist(z^\prime, \mcal{Y}_{i,j}) 
        \leq \frac{1}{2}(1 + \eta) \dist(\mcal{X}_i, \mcal{Y}_{i,j})\}.
\]
Lemma~\ref{lemmainclusioninlargerdomain} shows that 
\[
    \widehat{\mcal{Y}_{i,j} \cap \mcal{C}_n} \subset \tilde{\mcal{Y}}_{i,j} \cap \Omega
\]
for every $n$. 
Therefore, 
\[
    \|\kappa \|^2_{L^2(\mcal{X}_i \times (\widehat{\mcal{Y}_{i,j} \cap \mcal{C}_n}))} \leq \|\kappa \|^2_{L^2(\mcal{X}_i \times (\tilde{\mcal{Y}}_{i,j}\cap \Omega))},
\]
and hence
\[
    \|a_i - \bld{P}_i a_i \|^2_2 \leq \varepsilon^{\prime^2} N_{cc} \sum_{j=1}^{N_\text{adm}(i)} \|\kappa\|^2_{L^2(\mcal{X}_i \times (\tilde{\mcal{Y}}_{i,j}\cap \Omega))}.
\]
Since the admissible and terminal non-admissible cubes partition $\Omega$ up to measure-zero boundaries,
\begin{align}
    \sum_{j=1}^{N_\text{adm}(i)} \|\kappa\|^2_{L^2(\mcal{X}_i \times (\tilde{\mcal{Y}}_{i,j}\cap \Omega))} & \leq  
    \sum_{l=1}^{N_\text{adm}(i)} \| \kappa\|^2_{L^2(\mcal{X}_i \times (\mcal{Y}_{i,l} \cap \Omega))} \sum_{j=1}^{N_\text{adm}(i)} \mathds{1}_{\tilde{\mcal{Y}}_{i,j} \cap \mcal{Y}_{i,l} 
            \neq \emptyset}  \nonumber \\
        & \quad +  \sum_{p=1}^{N_\text{non}(i)} \| \kappa\|^2_{L^2(\mcal{X}_i \times (\mcal{N}_{i,p} \cap \Omega))} \sum_{j=1}^{N_\text{adm}(i)} \mathds{1}_{\tilde{\mcal{Y}}_{i,j} \cap \mcal{N}_{i,p} \neq \emptyset}.  \nonumber \\
        & \leq  \mathrm{max}_{l \in \{1, ..., N_\text{adm}(i)\}} \bigg|\{j : \mcal{Y}_{i,l} \cap \tilde{\mcal{Y}}_{i,j} 
            \neq \emptyset\}\bigg| \sum_{l=1}^{N_\text{adm}(i)} \| \kappa\|^2_{L^2(\mcal{X}_i \times (\mcal{Y}_{i,l} \cap \Omega))} \nonumber \\
        & \quad +    \mathrm{max}_{p \in \{1, ..., N_\text{non}(i)\}} \bigg|\{j : \mcal{N}_{i,p} \cap \tilde{\mcal{Y}}_{i,j} 
            \neq \emptyset\}\bigg| \sum_{p=1}^{N_\text{non}(i)} \| \kappa\|^2_{L^2(\mcal{X}_i \times (\mcal{N}_{i,p} \cap \Omega))}. \nonumber
\end{align}
The overlap lemmas from Section~\ref{hierSAdecompsection} provide uniform bounds for the multiplicities above. 
Lemmas \ref{CZadm} and \ref{CZnon} give 

\begin{equation*}
    \mathrm{max}_{l \in \{1, ..., N_\text{adm}(i)\}} \bigg|\{j : \mcal{Y}_{i,l} \cap \tilde{\mcal{Y}}_{i,j} 
        \neq \emptyset\}\bigg| 
        \leq K_{W,\text{adm}}
\end{equation*} 
and 
\begin{equation*}
    \mathrm{max}_{p \in \{1, ..., N_\text{non}(i)\}} \bigg|\{j : \mcal{N}_{i,p} \cap \tilde{\mcal{Y}}_{i,j} 
        \neq \emptyset\}\bigg| 
        \leq K_{W,\text{non}}
\end{equation*}
where both constants depend only on $d$ and $\eta$. 
It follows that
\[
    \|a_i - \bld{P}_i a_i\|_2^2 \leq \varepsilon^{\prime^2} N_{cc} \text{max}\{K_{W,\text{adm}}, K_{W,\text{non}}\} \left(\sum_{l=1}^{N_\text{adm}(i)} \| \kappa\|^2_{L^2(\mcal{X}_i \times (\mcal{Y}_{i,l} \cap \Omega))} + \sum_{p=1}^{N_\text{non}(i)} \| \kappa\|^2_{L^2(\mcal{X}_i \times (\mcal{N}_{i,p} \cap \Omega))} \right).
\]
Since these domains partition $\mcal{X}_i \times \Omega$ up to measure-zero boundaries, the row-wise approximation estimate yields 
\[
    \| a_i - \bld{P}_i a_i\|_2 
        \leq \varepsilon^\prime  \sqrt{N_{cc} \text{max}\{K_{W,\text{adm}}, K_{W,\text{non}}\} } \ \| \kappa\|_{L^2(\mcal{X}_i \times \Omega)}.
\]  
\end{proof} 

\subsection{Proof of Theorem~\ref{bigtheorem}}
\label{sectheoremproof}
We now apply Algorithm~\ref{alg:amselgen} to the task of approximating $\bld{A}_h$ by constructing a matrix approximation where each row lives in a pre-determined subspace $\mcal{V}_i$ from Section \ref{rowspaceconstructionsection} and leverage its approximation error to prove Theorem~\ref{bigtheorem}.

\begin{proof}[Theorem~\ref{bigtheorem}]
Let $c$ be the constant from Lemma~\ref{lemmaglobalapproxfe}, and choose 
\[
    \varepsilon_0 := \min\bigg\{\frac{1}{2c}, \frac{1}{e},\sqrt{h_1}\bigg\}.
\]
Fix $0 < \varepsilon < \varepsilon_0$, and choose the mesh size 
\[
    h = \max\{h^\prime \in \mcal{H}: h^\prime \leq \varepsilon^\frac{2}{\rho} \}.
\]
Since $h_n \rightarrow 0$ as $n \rightarrow \infty$, such a mesh size $h$ exists.
In particular,
\[
    ch^\rho \leq c \left( \varepsilon^\frac{2}{\rho} \right)^\rho < \frac{\varepsilon}{2}.
\]
Define $\varepsilon^\prime := \frac{\varepsilon \sqrt{\delta}}{2\sqrt{2\text{max}\{K_{W,\text{adm}}, K_{W,\text{non}}\} N_{cc}}}$. 
We first show that the assumed lower bound on $m$ is sufficient to apply Lemma~\ref{lemmarowsubspaceconstruction}. 
In particular, we need $m \geq m_{\varepsilon^\prime}:= \max \Bigg\{ 1, \Bigg\lceil \frac{\log{\left( \frac{C}{\varepsilon^\prime (1-\gamma \eta d)}\right)}}{-\log{(\gamma \eta d)}} \Bigg\rceil \Bigg\}$. 
If
\[
\log\!\left(\frac{C}{\varepsilon'(1-\gamma\eta d)}\right)\leq0,
\]
then \(m_{\varepsilon'}=1\), and the desired estimate is trivial as follows
\begin{align}
    m_{\varepsilon^\prime} 
        & = 1 \leq \underbrace{1}_{=:K_1} \cdot ( \log{\varepsilon^{-1}} + \log{\delta^{-\frac{1}{2}}} )\nonumber
\end{align}
since $\varepsilon < \frac{1}{e}$ and $\delta \in (0,1)$. 
Otherwise, 
\begin{align}
    m_{\varepsilon^\prime} 
        & = \Bigg\lceil \frac{\log{\left( \frac{C}{\varepsilon^{\prime}(1 - \gamma \eta d)}  \right)}}{-\log{(\gamma \eta d)}} \Bigg\rceil \nonumber \\
        & \leq \frac{1}{-\log{\gamma \eta d}} \log{\left( \frac{2C \sqrt{2 N_{cc}\text{max}\{K_{W,\text{adm}}, K_{W,\text{non}}\}}}{1 - \gamma \eta d} (\sqrt{\delta}\varepsilon)^{-1}\right)} + 1 
    \nonumber \\
        & \leq  \frac{1}{-\log{\gamma \eta d}} \left[ \log{\left( \frac{2C \sqrt{2 N_{cc}\text{max}\{K_{W,\text{adm}}, K_{W,\text{non}}\}}}{1 - \gamma \eta d}\right)} + \log{(\sqrt{\delta}\varepsilon)^{-1}} \right] + \log{(\sqrt{\delta}\varepsilon)^{-1}} \nonumber \\
        & \leq \underbrace{ \left( \frac{1}{-\log{\gamma \eta d}} \left[ \max \Bigg\{ \log{\left( \frac{2C\sqrt{2 N_{cc}\text{max}\{K_{W,\text{adm}}, K_{W,\text{non}}\}}}{1 - \gamma \eta d}\right)}, 0 \Bigg\} + 1 \right]+ 1 \right)}_{=:K_2} \log{(\sqrt{\delta}\varepsilon)^{-1}}. \nonumber
\end{align}
Define $K_0 := \max\{K_1,K_2\}$, and let $K \geq K_0$. 
Choose $m \geq K \log{(\sqrt{\delta}\varepsilon)^{-1}} \geq m_{\varepsilon^\prime}$. 
Lemma~\ref{lemmarowsubspaceconstruction} therefore, for every row $a_i^T = (\bld{A}_h)_{i,:}$, constructs a subspace $\mcal{V}_i$ with an orthogonal projector $\bld{P}_i$ satisfying
\[
    \| a_i - \bld{P}_i a_i\|_2 
        \leq \varepsilon^\prime \sqrt{N_{cc} \text{max}\{K_{W,\text{adm}}, K_{W,\text{non}}\}} \| \kappa\|_{L^2(\mcal{X}_i \times \Omega)}.
\] 
Moreover, 
\[
    \dim \mcal{V}_i 
        \leq k,
\]
where $k = 2^{d} K_{d,\eta} N_{cc} \big\lceil \log_2(\sqrt{d}h^{-1}) \big\rceil \cdot \binom{m+d}{d} + K_\text{non,FE}$.

We next show that the stated query bound implies $q \geq 2k + 1$. 
Since each row subspace contains at least the canonical basis vector associated with its corresponding mesh cell $\mcal{X}_i$, we have $k \geq 1$. 
Thus, the condition $q \geq 2 k + 1$ implies that $q \geq k + 2$, as required by Theorem~\ref{thm:amselgen}, and also gives $\frac{k}{q-k-1} \leq 1$.
Since $\rho < 1$ and $0 < \varepsilon < \frac{1}{e} < 1$,
\[
    \varepsilon^{2/\rho} \leq \varepsilon^2 < h_1,
\]
so $h = h_n$ for some $n \geq 2$ satisfying $h_n \leq \varepsilon^{2/\rho} < h_{n-1}$. 
The refinement condition in Assumption~1\ref{assumefemmesh} therefore gives  
\[
    c_\mcal{H} \varepsilon^\frac{2}{\rho} 
        < c_\mcal{H} h_{n-1} \leq h_n = h \leq \varepsilon^\frac{2}{\rho} < h_{n-1}.
\]
This relation provides a lower bound for $h$ in terms of $\varepsilon$ allowing us to bound
\begin{align}
    2k + 1 
        & =  2^{d+1} K_{d,\eta} N_{cc} \big\lceil\log_2{\left( \sqrt{d} h^{-1} \right)}\big\rceil  \binom{m+d}{d} + 2 K_\text{non,FE} + 1 \nonumber \\
        & \leq 2^{d+1} K_{d,\eta} N_{cc} \Bigg\lceil \log_2\left( \frac{\sqrt{d}}{c_\mcal{H}} \varepsilon^\frac{-2}{\rho} \right) \Bigg\rceil  \binom{m+d}{d}  + 2 K_\text{non,FE} + 1 \nonumber \\
        & \leq \frac{2^{d+1} K_{d,\eta} N_{cc} }{\log(2)} \left( \log{\frac{\sqrt{d}}{c_\mcal{H}}} + \frac{2}{\rho} \log{\varepsilon^{-1}} + \log{2}  \right)  \binom{m+d}{d}  + 2 K_\text{non,FE} + 1 \nonumber 
\intertext{
using that $\rho < 1$ and  $\log{(\varepsilon^{-1})} > 1$ since $\varepsilon < \frac{1}{e}$
}
        & \leq \frac{1}{\rho} \left[ \frac{2^{d+1}N_{cc} K_{d,\eta}}{\log(2)} \left( \log{\frac{\sqrt{d}}{c_\mcal{H}}} + 2 + \log{2} \right) \log{(\varepsilon^{-1})} \binom{m+d}{d}   + 2 K_\text{non,FE} + 1 \right] \nonumber \\
\intertext{
using that $\binom{m+d}{d} \leq \frac{(m+d)^d}{d!} \leq \frac{(2dm)^d}{d!}$ since $d\geq 1$ and $m \geq 1$
}
        & \leq \frac{1}{\rho} \left[ \frac{2^{2d+1}N_{cc} K_{d,\eta}d^d}{\log(2)d!} \left( \log{\frac{\sqrt{d}}{c_\mcal{H}}} + 2 + \log{2} \right) \log{(\varepsilon^{-1})} m^d   + 2 K_\text{non,FE} + 1 \right]. \nonumber 
\end{align}
Here we observe that $m\geq K_0 \log{(\delta^{-\frac{1}{2}}\varepsilon^{-1})}$ implies that $1 < \frac{m^d}{K_0^d} \log{\varepsilon^{-1}}$, yielding
\begin{align}
    2k + 1
        & \leq \frac{1}{\rho} \underbrace{\left[ \frac{2^{2d+1}N_{cc} K_{d,\eta}d^d}{\log(2)d!} \left( \log{\frac{\sqrt{d}}{c_\mcal{H}}} + 2 + \log{2} \right)   + \frac{2 K_\text{non,FE} + 1}{K_0^d}  \right]}_{=:K^\prime_0} m^d\log{(\varepsilon^{-1})} \nonumber.
\end{align}
Let $K^\prime \geq K^\prime_0$. 
Thus, taking a number of queries $q \geq \frac{K^\prime}{\rho} m^d \log{\varepsilon^{-1}}$ implies the condition $q \geq 2k + 1$.
We now apply Theorem~\ref{thm:amselgen}.
Using the same notation from Section \ref{amselgensection}, let 
$ \bld{P} [\bld{A}_h]$ denote the matrix obtained by projecting each row of $\bld{A}_h$ onto its corresponding row subspace $\mcal{V}_i$.
Algorithm~\ref{alg:amselgen} constructs a matrix approximation $\bld{\tilde{A}}_h$ such that
\[
    \mathbb{E}[\| \bld{P}[\bld{A}_h] - \tilde{\bld{A}}_h\|_F^2] 
        \leq   \frac{k}{q-k-1} \| \bld{A}_h - \bld{P}[\bld{A}_h]\|_F^2. 
\]
By the row-space estimate, 
\[
    \| \bld{A}_h - \bld{P}[\bld{A}_h]\|_F^2 = \sum_{i=1}^{|\mcal{T}_h|} \|  a_i - \bld{P}_i a_i\|_2^2 \leq \varepsilon^{\prime^2}  \text{max}\{K_{W,\text{adm}},K_{W,\text{non}}\} N_{cc} \| \kappa\|_{L^2(\Omega \times \Omega)}^2.
\]
Furthermore, row-wise orthogonality gives
\begin{align}
   \mathbb{E}[\| \bld{A}_h - \bld{\tilde{A}}_h\|_F^2] 
        & = \| \bld{A}_h - \bld{P} [\bld{A}_h]\|_F^2 + \mathbb{E}[\| \bld{P} [\bld{A}_h] - \bld{\tilde{A}}_h\|_F^2] \nonumber \\
        & \leq \left( 1 + \frac{k}{q-k-1}\right) \varepsilon^{\prime^2}  \text{max}\{K_{W,\text{adm}},K_{W,\text{non}}\} N_{cc} \| \mcal{A}\|_{HS}^2 . \nonumber
\end{align}
Using $q \geq 2k + 1$ queries, we bound $\| \bld{A}_h - \bld{\tilde{A}}_h\|_F $ with a probabilistic bound leveraging Markov's inequality to yield
\begin{align} 
    \mathbb{P}( \| \bld{A}_h - \bld{\tilde{A}}_h\|_F 
        & \geq \frac{\varepsilon}{2}\| \mcal{A}\|_{HS:L^2 \rightarrow L^2}) \nonumber \\
        & = \mathbb{P}(\| \bld{A}_h - \bld{\tilde{A}}_h\|_F^2 
            \geq \left( \frac{\varepsilon}{2}\right)^2 \| \mcal{A}\|_{HS:L^2 \rightarrow L^2}^2) \nonumber \\
        & \leq \frac{\mathbb{E}[\| \bld{A}_h - \bld{\tilde{A}}_h\|_F^2] }{\left(\frac{\varepsilon}{2} \right)^2 \|\mcal{A}\|^2_{HS}} \nonumber \\
        & \leq \frac{2 \varepsilon^{\prime^2}  \text{max}\{K_{W,\text{adm}},K_{W,\text{non}}\} N_{cc}\|\mcal{A}\|^2_{HS}}{\left(\frac{\varepsilon}{2} \right)^2 \|\mcal{A}\|^2_{HS}} \nonumber \\
        & \leq \delta \nonumber
\end{align}
where the final inequality follows from the choice of $\varepsilon^\prime = \frac{\sqrt{\delta}\varepsilon}{2\sqrt{2\text{max}\{K_{W,\text{adm}},K_{W,\text{non}}\} N_{cc}}}$.
Therefore, with probability at least $1 - \delta$,
\[
    \| \bld{A}_h - \bld{\tilde{A}}_h\|_F 
        \leq \frac{\varepsilon}{2}\| \mcal{A}\|_{HS:L^2 \rightarrow L^2}.
\]
Since $J_h$ is an isometry, 
\[
    \|P_h \mcal{A} P_h - \tilde{\mcal{A}}\|_{HS:L^2 \rightarrow L^2} = \|\bld{A}_h - \tilde{\bld{A}}_h\|_F
\]
and hence
\[
   \|P_h \mcal{A} P_h - \tilde{\mcal{A}}\|_{HS:L^2 \rightarrow L^2} \leq  \frac{\varepsilon}{2}\| \mcal{A}\|_{HS:L^2 \rightarrow L^2}.
\]
On the other hand, Lemma ~\ref{FEdiscretizationlemma} and $ch^\rho < \varepsilon/2$ give
\[
    \|\mcal{A} - P_h \mcal{A}P_h\|_{HS:L^2 \rightarrow L^2} \leq \frac{\varepsilon}{2}\left( \| \mcal{A}\|_{HS:L^2 \rightarrow H^\rho} +  \| \mcal{A}^*\|_{HS:L^2 \rightarrow H^\rho} \right).
\]
Combining these two estimates by triangle inequality gives, with probability at least $1 - \delta$, 
\[
    \| \mcal{A} - \tilde{\mcal{A}}\|_{HS:L^2 \rightarrow L^2}         \leq \varepsilon \left( \| \mcal{A}\|_{HS:L^2 \rightarrow H^\rho} +  \| \mcal{A}^*\|_{HS:L^2 \rightarrow H^\rho} \right)
\]
where we use that $\| \mcal{A} \|_{HS:L^2 \rightarrow L^2} \leq \| \mcal{A} \|_{HS:L^2 \rightarrow H^\rho}$.
\end{proof}
\section{Concluding Remarks and Outlook}
\begin{wrapfigure}{r}{0.32\textwidth}
    \centering
    \includegraphics[width=0.95\linewidth]{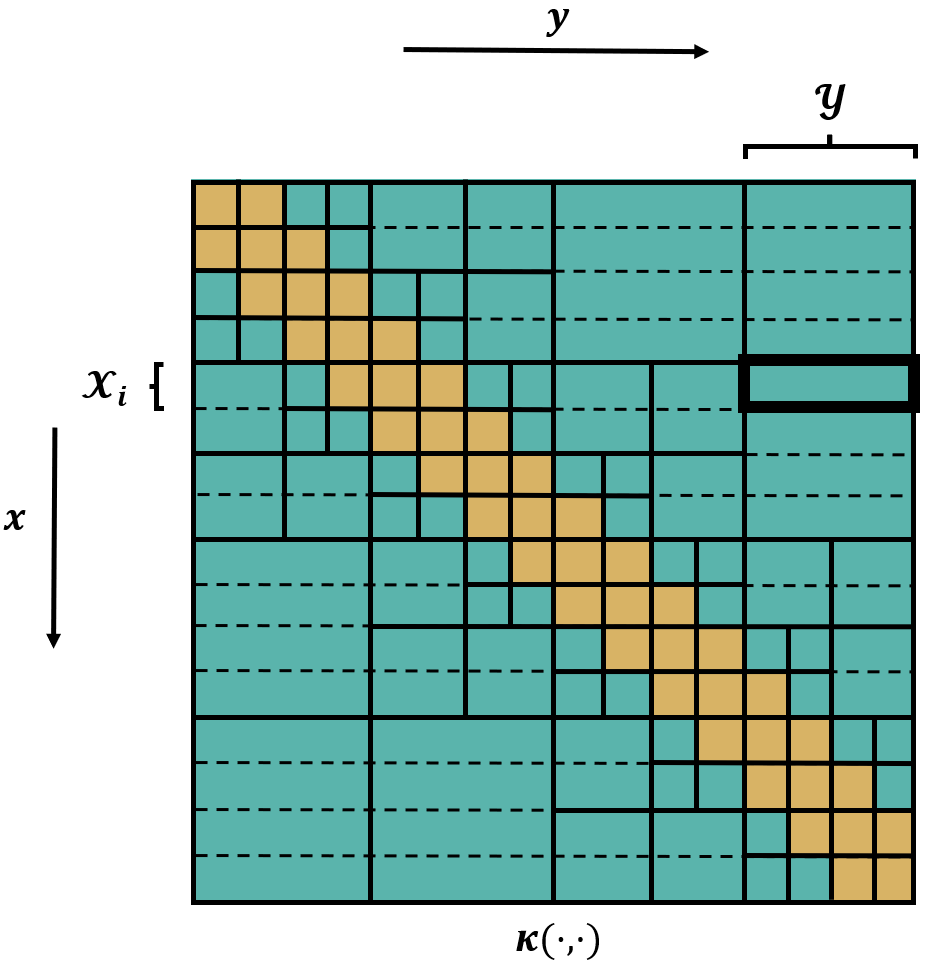}
    \caption{A polynomial basis for an outer admissible block $\mcal{X}_i \times \mcal{Y}$ of the $i$-th row is shared across multiple rows. }
\label{fig:shared bases}
\end{wrapfigure}

In this work, we presented a randomized matrix approximation algorithm for approximating matrices with pre-defined row-subspace structure and used it to construct an adjoint-free approximation algorithm for asymptotically smooth integral operators. 
We established that the resulting algorithm achieves Hilbert-Schmidt error guarantees with a polylogarithmic query complexity in the inverse tolerance, or equivalently a stretched exponential decay in the query budget. 
We also demonstrated this query-efficient behavior numerically for one-dimensional non-self-adjoint elliptic solution operators. 
Together, these results show that adjoint-free operator approximation can be query-efficient when sufficient a priori structure is available to construct accurate low-dimensional approximations of the local row spaces.

This work opens up several practical and theoretical avenues to pursue. 
We briefly comment on a few. 
As currently stated, the row subspaces construction and row-wise least-squares problems of Algorithm~\ref{adjointfreealg} can be implemented in parallel in practice, but it could be further optimized for computational performance. 
As shown in Figure \ref{fig:shared bases}, polynomial bases for the outer admissible domains are, in fact, shared across many row subspaces, leading to redundant floating point operations and storage. Instead of proceeding row-wise in parallel, the algorithm could be restructured to proceed in parallel across the levels of a standard hierarchical partition inwards toward the diagonal. 

Moreover, the practitioner will most likely not be privy to the exact constants governing the regime where an exponential convergence rate is possible, so they are likely to proceed by increasing the number of queries and decreasing the mesh size according to the scalings suggested by Corollary~\ref{corquerybudget} until the desired tolerance is reached. 
However, the algorithm is rather limited by the fact that Gaussian queries of a given resolution size to the underlying operator cannot be reused in the setting of a finer or coarser mesh since Gaussian random vectors of different sizes are not governed by the same law. 
It would be of great value if one could reuse queries when trying to efficiently ascertain a reasonable mesh size for the discretization.
Furthermore, for applications where arbitrary Gaussian inputs may be impractical or physically inadmissible, it would be useful to investigate alternative admissible query models and characterize when they still permit query-efficient approximation.

The numerical experiments further indicate that the difficulty of the underlying PDE is reflected in the convergence of the approximation. 
In particular, stronger advection and poorer ellipticity lead to slower observed convergence. 
This suggests studying whether challenging operators can be preconditioned into a form with a more favorable local approximation structure. 
Such preconditioning may need to exploit known information about the coefficients and could lead to more effective adjoint free approximation for difficult problems. 

On the theoretical side, asymptotic smoothness is simply one sufficient assumption for producing the local row-space information required by the approximation algorithm, rather than an essential feature of the framework.
For elliptic PDEs, the $L^2$-asymptotic smoothness assumptions are closely related to interior regularity and Caccioppoli-type estimates for PDE-harmonic functions. 
One natural direction is therefore to establish asymptotic smoothness for broader classes of non-self-adjoint and variable-coefficient elliptic solution operators.
More generally, it would be also be valuable to identify kernel structures arising naturally for other classes of operators that likewise yield accurate low-dimensional local row approximation spaces. 

\section*{Data Availability}
Code and data supporting this manuscript are publicly available on GitHub at \url{https://github.com/egallmeier/1DAdvectionDiffusionEqn.git}. 

\section*{Acknowledgements}
We thank Nicolas Boullé, Diana Halikias and Alex Townsend for inspiration and insightful discussions. 

\addcontentsline{toc}{section}{References}

\printbibliography

\appendix
\section{Appendix}

\subsection{Proof of Corollary~\ref{corquerybudget}}
\label{adxcorquerybudget}
\begin{C1}
Assume the setting of Theorem~\ref{bigtheorem} with $\eta < \min\big\{\frac{1}{\gamma d}, 1\big\}$. There exist constants $\tilde{\varepsilon}_0, \tilde{K}_0,\tilde{K}^\prime_0$ such  that for all $\tilde{K} \geq \tilde{K}_0, \tilde{K}^\prime \geq \tilde{K}_0^\prime$, $\delta \in (0,1)$ and $\rho \in (0,1)$ with $\rho < s - \frac{d}{2}$, using a number of Gaussian queries 
\[
    q > q_0
        := \frac{\tilde{K}^\prime \tilde{K}^d}{\rho} \left[ \max \Big\{\log{(\tilde{\varepsilon}_0^{-1})}, \frac{1}{2}\log{(\delta^{-1})}\Big\} \right]^{d+1}
\] 
with a finite element mesh size
\[
    h = \text{max}\Bigg\{h' \in \mcal{H}: h' \leq e^{-\frac{2}{\rho}\left(\frac{ q \rho}{\tilde{K}^\prime\tilde{K}^d} \right)^\frac{1}{d+1}}\Bigg\}
\]
and a polynomial degree of $m_0$ for each admissible cube restricted to a convex component of $\Omega$ 
\[
    m_0 = \min\Bigg\{m \in \mathbb{N}: m \geq (\tilde{K}-1)\left( \frac{q \rho}{\tilde{K}^\prime\tilde{K}^d} \right)^\frac{1}{d+1} \Bigg\}, 
\]
Algorithm~\ref{adjointfreealg} constructs a finite rank approximation $\tilde{\mcal{A}} = Alg(f \mapsto \mcal{A}f, q, m_0, h, \eta)$ such that 
\[
    \mathbb{P}\left( \| \mcal{A} - \tilde{\mcal{A}}\|_{HS:L^2 \rightarrow L^2}
        \leq e^{- \left( \frac{q \rho}{\tilde{K}^\prime\tilde{K}^d} \right)^\frac{1}{d+1}} \left(\| \mcal{A}\|_{HS:L^2 \rightarrow H^\rho} + \| \mcal{A}^*\|_{HS:L^2 \rightarrow H^\rho} \right) \right) 
        \geq 1 - \delta. 
\]
\end{C1}

\begin{proof}
Let $\varepsilon_0,K_0,K_0^\prime$ be the constants from Theorem~\ref{bigtheorem} where $\varepsilon_0 = \text{min}\big\{\frac{1}{2c}, \frac{1}{e},\sqrt{h_1}\big\}$.
Set $\tilde{\varepsilon}_0 := \varepsilon_0$, $\tilde{K}_0 := 2K_0 + 1$ and $\tilde{K}^\prime_0 := K^\prime_0$. 

Let $q > q_0$.
Define $\varepsilon_q := e^{-\left( \frac{q \rho}{\tilde{K}^\prime\tilde{K}^d} \right)^\frac{1}{d+1}}$. 
Define the finite element mesh size $h$ and the polynomial degree parameter $m_0$ as stated above. 
We show that these parameters satisfy the conditions required by Theorem~\ref{bigtheorem}. 
First, the inequality $q > q_0$ and the definition of $\tilde{\varepsilon}_0$ ensure that $\varepsilon_q < \varepsilon_0$. 
Now we show that the polynomial degree parameter satisfies $m_0 \geq K_0 \log{((\varepsilon_q \sqrt{\delta})^{-1})}$. 
Using $\varepsilon_q < \varepsilon_0$ and the inequality $q > q_0$, observe that
\begin{align}
    K_0 \log{((\varepsilon_q \sqrt{\delta})^{-1})} & \leq K_0 \Big[\log{\varepsilon_q^{-1}} + \frac{1}{2}\log{\delta^{-1}}\Big] \nonumber \\
    & \leq 2 K_0 \left( \frac{q \rho}{\tilde{K}^\prime\tilde{K}^d} \right)^\frac{1}{d+1} = (\tilde{K}_0 -1) \left( \frac{q \rho}{\tilde{K}^\prime\tilde{K}^d} \right)^\frac{1}{d+1} \nonumber  \\
    & \leq (\tilde{K}-1)\left( \frac{q \rho}{\tilde{K}^\prime\tilde{K}^d} \right)^\frac{1}{d+1} \leq m_0. \nonumber
\end{align}
We next show that the number of queries satisfies $q \geq \frac{K_0^\prime}{\rho}\log{\varepsilon_q^{-1}}m_0^d$. 
The polynomial degree parameter $m_0$ can be upper bounded as follows
\[
    m_0 \leq (\tilde{K}-1)\left( \frac{q \rho}{\tilde{K}' \tilde{K}^d} \right)^\frac{1}{d+1} + 1 \leq \tilde{K} \left( \frac{q \rho}{\tilde{K}' \tilde{K}^d} \right)^\frac{1}{d+1}
\]
using that $\left( \frac{q \rho}{\tilde{K}' \tilde{K}^d} \right)^\frac{1}{d+1} \geq \log{\varepsilon_q^{-1}} \geq 1$.
Then,
\[ 
    \frac{K^\prime_0}{\rho} \log{(\varepsilon_q^{-1})} m_0^d \leq \frac{\tilde{K}^\prime}{\rho} \left( \frac{q \rho}{\tilde{K}' \tilde{K}^d} \right)^\frac{1}{d+1} m_0^d  \leq \frac{\tilde{K}'\tilde{K}^d}{\rho}\frac{q \rho}{\tilde{K}' \tilde{K}^d} = q.
\]
Furthermore, since $h_n \rightarrow 0$ and is strictly decreasing, the set $\Big\{h' \in \mcal{H}: h' \leq \varepsilon_q^\frac{2}{\rho}\Big\}$ has a maximum.  
Therefore, all hypotheses of Theorem~\ref{bigtheorem} are satisfied, and applying it implies that Algorithm~\ref{adjointfreealg}  constructs an $\varepsilon_q$-approximation $\tilde{\mcal{A}}$ in probability at least $1 - \delta$.
\end{proof}

\subsection{Finite Element Discretization}
For a mesh $\mcal{T}_h$, let $h_\mcal{X}$ denote the diameter of a cell $\mcal{X} \in \mcal{T}_h$, and set $h:= \max_{\mcal{X} \in \mcal{T}_h} h_\mcal{X}$. 

\begin{lemma}
Let $(\mcal{T}_h)_{h \in \mcal{H}}$ be a quasi-uniform mesh sequence. 
Then, the number of mesh cells $\mcal{X} \in \mcal{T}_h$ satisfies 
\[
    |\mcal{T}_h| = \mcal{O}(h^{-d}). 
\]
\label{numberofmeshcells}
\end{lemma}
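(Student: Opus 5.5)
The plan is a volume-counting argument. The only inputs are shape regularity and quasi-uniformity from Assumption~1\ref{assumefemmesh} and the boundedness of $\Omega$ from Assumption~1\ref{assumerescaling}. As a by-product we will obtain the lower bound $|\mcal{X}| \geq K_\text{mesh} h^d$ on cell volumes, which was invoked in the proofs of Lemmas~\ref{lemmaglobalapproxfe} and \ref{lemmanumofFEtocoverNA}. We therefore make the constant $K_\text{mesh}$ explicit.

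First, fix $h \in \mcal{H}$ and a cell $\mcal{X} \in \mcal{T}_h$. By definition, $\mcal{X}$ contains a ball of diameter $\rho_{\mcal{X}}$. Shape regularity gives $\rho_{\mcal{X}} \geq h_{\mcal{X}}/\sigma_\#$, and quasi-uniformity gives $h_{\mcal{X}} \geq c_\text{quasi} h$. Combining these,
\[
    \Vol(\mcal{X}) \geq \omega_d \left(\frac{\rho_{\mcal{X}}}{2}\right)^d \geq \omega_d \left(\frac{c_\text{quasi}}{2\sigma_\#}\right)^d h^d =: K_\text{mesh} h^d .
\]
The constant $K_\text{mesh}$ depends only on $d$, $\sigma_\#$ and $c_\text{quasi}$. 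In particular, it is independent of $h$ and of the cell.

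Second, the cells of $\mcal{T}_h$ have pairwise-disjoint interiors and exactly cover $\overline{\Omega}$. Each cell boundary is Lebesgue-null, since cells are regular enough to admit inscribed balls and the boundaries are assumed negligible in the exact-mesh setting. Hence the volumes add:
\[
    |\mcal{T}_h| \, K_\text{mesh} h^d \leq \sum_{\mcal{X}\in\mcal{T}_h} \Vol(\mcal{X}) = \Vol(\Omega) \leq \Vol([0,1]^d) = 1 .
\]
The last inequality uses Assumption~1\ref{assumerescaling}. It follows that $|\mcal{T}_h| \leq K_\text{mesh}^{-1} h^{-d}$, which is the claimed $\mcal{O}(h^{-d})$ bound, uniform over $h\in\mcal{H}$.

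The only delicate point is justifying that cell boundaries have measure zero, so that the volumes sum to $\Vol(\Omega)$. If we avoid that, we still have $\sum \Vol(\operatorname{int}\mcal{X}) \leq \Vol(\Omega)$ because the interiors are disjoint open subsets of $\Omega$. Each interior contains the inscribed open ball, so its volume already satisfies the lower bound $K_\text{mesh}h^d$. This gives the same conclusion without any boundary-regularity assumption, and it is the version I would write.
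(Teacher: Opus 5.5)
Your proposal is correct and follows the paper's proof: bound each cell's volume below by its inscribed ball via shape regularity and quasi-uniformity to get $K_\text{mesh} h^d$, then compare the total with the volume of the domain. Your interior-based version makes rigorous the step $\mathrm{vol}(\Omega) \geq |\mcal{T}_h| K_\text{mesh} h^d$, which the paper asserts directly from the exact covering without discussing overlaps on cell boundaries.
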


\begin{proof}
Denote $\rho_\mcal{X}$ as the diameter of the largest ball that can be inscribed in $\mcal{X} \in \mcal{T}_h$. 
Therefore,
\begin{align}
    \mathrm{vol}(\mcal{X}) 
        & \geq \omega_d \left(\frac{\rho_\mcal{X}}{2}\right)^d  
            \geq \omega_d \left(\frac{h_\mcal{X}}{2\sigma_{\#}}\right)^d 
            \geq \omega_d \left(\frac{c_\text{quasi}h}{2\sigma_{\#}}\right)^d \nonumber
\end{align}
using shape regularity in the second inequality and quasi-uniformity in the last inequality. 
Let $K_\text{mesh} := \omega_d \left( \frac{c_\text{quasi}}{2 \sigma_\#} \right)^d$. Every mesh cell has volume at least $K_\text{mesh} h^d$. 
Since $\overline{\Omega} = \bigcup_{\mcal{X} \in \mcal{T}_h} \mcal{X}$, 
\[
    \mathrm{vol}(\Omega) 
        \geq |\mcal{T}_h| K_\text{mesh} h^d
\]
then
\[
    |\mcal{T}_h| 
        \leq \frac{\mathrm{vol}(\Omega)}{K_\text{mesh}} h^{-d}. 
\] 
\end{proof}
\subsection{Reference Discretization \& Error Measurement for Section~\ref{sec:numericalexamples}}
\label{appendix:numerics}

This appendix details the reference discretizations used in Figures~\ref{fig:constant-numerics} and \ref{fig:numerical_experiments} and explains how the reported errors relate to the continuous Hilbert-Schmidt error.
Let $\mcal{V}_{h_\text{ref}}^0$ denote the piecewise constant space on the reference mesh $\mcal{T}_{h_\text{ref}}$, equipped with its basis.
Let $J_{\text{ref},0}$ denote the corresponding prolongation operator. 
Since the basis is $L^2$-orthonormal, $J_{\text{ref},0}$ is an isometry, and $P_{\text{ref},0} = J_{\text{ref},0} J_{\text{ref},0}^\star$ is the $L^2$-orthogonal projection onto $\mcal{V}_{h_\text{ref}}^0$.
Define the exact projected operator by $\mcal{A}_\text{ref}^\text{proj} = P_{\text{ref},0} \mcal{A} P_{\text{ref},0}.$ For the constant-coefficient problem, we construct the reference matrix directly from the known Green's function
\[
    \bld{A}_{\text{ref}_{i,j}} = \frac{1}{\sqrt{|\mcal{X}_{\text{ref},i}||\mcal{X}_{\text{ref},j}|}} \int_{\mcal{X}_{\text{ref}_i}} \int_{\mcal{X}_{\text{ref}_j}} G(x,y) dy dx. 
\]
With exact evaluation of these integrals, this matrix represents $\mcal{A}_\text{ref}^\text{proj}$ in the normalized reference basis. 

For the variable-coefficient problems, we approximate the solution operator using continuous piecewise linear finite elements on the reference mesh. 
Let $\mcal{V}_{h_\text{ref}}^1 \subset H^1_0(0,1)$ denote this space, its prolongation operator 
$J_\text{ref,1}$ in the continuous piecewise linear basis, and $\bld{K}_\text{ref}$ the corresponding Galerkin matrix associated with the PDE bilinear form. 
The reference matrix is
\[
    \bld{A}_\text{ref} = J_{\text{ref},0}^\star J_{\text{ref},1} \bld{K}_\text{ref}^{-1} J_{\text{ref},1}^\star J_{\text{ref},0}.
\]
For both the constant and variable-coefficient problems, the corresponding reference operator on $L^2(0,1)$ is $\mcal{A}_\text{ref} = J_{\text{ref},0} \bld{A}_\text{ref} J_{\text{ref},0}^\star.$ 
In particular, $\mcal{A}_\text{ref} = \mcal{A}_\text{ref}^\text{proj}$ for the constant-coefficient problem. 

Algorithm~\ref{adjointfreealg} operates on a coarser mesh $\mcal{T}_h$, with normalized piecewise constant space $\mcal{V}_h^0$ and prolongation operator $J_h$.  
For simplicity, the reference mesh is chosen so that $\mcal{V}_h^0 \subset \mcal{V}_{h_\text{ref}}^0$
for every coarse mesh size $h$ used in the experiments. 
Consequently, the coarse forcing can be represented exactly on the fine reference mesh without introducing an additional interpolation error. 
For $\bld{\omega}\sim \mcal{N}(\bld{0},\bld{I}_{|\mcal{T}_h|})$, a query uses the forcing $J_h \bld{\omega}$ and returns 
\[
    \bld{y} = J_h^\star \mcal{A}_\text{ref} J_h \bld{\omega} = J_h^\star J_{\text{ref},0} \bld{A}_\text{ref} J_{\text{ref},0}^* J_h \bld{\omega}.
\]
The algorithm accesses the reference solution operator only by applying these Gaussian piecewise constant forcings and projects the resulting solution back onto $\mcal{V}_h^0$. 
Ultimately, the algorithm returns a matrix approximation $\tilde{\bld{A}}_h$, defining the approximated operator $\tilde{\mcal{A}} = J_h \tilde{\bld{A}}_h J_h^\star$.

By triangle inequality, the absolute continuous error can be bounded as follows
\[
    \|\mcal{A} - \tilde{\mcal{A}}\|_{HS}  \leq   \underbrace{\|\mcal{A} - \mcal{A}_\text{ref}^\text{proj}\|_{HS}}_{\text{reference projection error}} + \underbrace{\|\mcal{A}_\text{ref}^\text{proj} - \mcal{A}_\text{ref}\|_{HS}}_{\text{reference  discretization error}} + \underbrace{\|\mcal{A}_\text{ref} - \tilde{\mcal{A}}\|_{HS}}_{\text{measured approximation error}}.
\]
Nesting of the coarse and reference meshes and orthonormality of the reference basis imply that the reported relative error satisfies
\[
    \frac{\|\bld{A}_\text{ref} - \tilde{\bld{A}}_\text{ref}\|_F}{\|\bld{A}_\text{ref}\|_F} = \frac{\|\mcal{A}_\text{ref} - \tilde{\mcal{A}}\|_{HS}}{\|\mcal{A}_\text{ref}\|_{HS}}.
\]
To compare the relative reported error with the continuous relative error, define
\[
    \varepsilon_\text{ref}: = \frac{\|\mcal{A} - \mcal{A}_\text{ref}^\text{proj}\|_{HS} + \|\mcal{A}_\text{ref}^\text{proj} - \mcal{A}_\text{ref}\|_{HS}}{\|\mcal{A}_\text{ref}\|_{HS}}.
\]
Reverse triangle inequality gives $\|\mcal{A}\|_{HS} \geq (1 - \varepsilon_\text{ref}) \|\mcal{A}_\text{ref}\|_{HS}.$ Consequently, provided that $\varepsilon_\text{ref} < 1$, 
\[
    \frac{\|\mcal{A} - \tilde{\mcal{A}}\|_{HS}}{\|\mcal{A}\|_{HS}} \leq \frac{ \frac{\|\mcal{A}_\text{ref} - \tilde{\mcal{A}}\|_{HS}}{\|\mcal{A}_\text{ref}\|_{HS}} + \varepsilon_\text{ref}}{1 - \varepsilon_\text{ref}}
\]
For the constant-coefficient experiment, the reference discretization error vanishes when the Green's function integrals are evaluated exactly. 
The bound shows that the continuous relative error is controlled by the reported relative error together with the reference projection and discretization errors. 
We use $|\mathcal{T}_{h_\text{ref}}| = 16384$ elements for the constant-coefficient experiment and $|\mathcal{T}_{h_\text{ref}}| = 8192$ elements for the variable-coefficient experiments. 
In each experiment, the reference mesh has at least four times as many elements as the finest mesh used by the algorithm. 
Further refinement of the reference mesh does not considerably change the reported errors. 
We therefore report $\frac{\|\bld{A}_\text{ref} - \tilde{\bld{A}}_\text{ref}\|_F}{\|\bld{A}_\text{ref}\|_F}$
as a numerical proxy for the continuous relative Hilbert-Schmidt error appearing in the analysis.

\end{document}